\newtheorem{Lem}{Lemma}[section]
\newtheorem{Prop}[Lem]{Proposition}
\newtheorem*{Def}{Definition}
\theoremstyle{plain}
\newtheorem{Thm}[Lem]{Theorem}
\newtheorem{Cor}[Lem]{Corollary}
\theoremstyle{definition}
\declaretheorem[numbered=no,name=Example,qed={\lower-0.3ex\hbox{$\triangleleft$}}]{Ex}
\newtheorem*{Rem}{Remark}
\newtheorem*{Rems}{Remarks}
\newcommand{\Hom}{\text{\textnormal{Hom}}}
\newcommand{\End}{\text{\textnormal{End}}}
\newcommand{\Aut}{\text{\textnormal{Aut}}}
\newcommand{\pt}{\text{\textnormal{\textnormal{pt}}}}
\newcommand{\ev}{\text{\textnormal{ev}}}
\newcommand{\tr}{\text{\textnormal{tr}}}
\mathchardef\mhyphen="2D
\newcommand{\Obj}{\text{\textnormal{Obj}}}
\newcommand{\git}{/\!\!/}
\newcommand{\refl}{\text{\textnormal{ref}}}
\newcommand{\BBun}{\mathcal{B}\text{\textnormal{un}}}
\newcommand{\Map}{\text{\textnormal{Map}}}
\newcommand{\MMap}{\mathfrak{M}\text{\textnormal{ap}}}
\newcommand{\ori}{\text{\textnormal{or}}}
\newcommand{\op}{\text{\textnormal{op}}}
\newcommand{\coop}{\text{\textnormal{coop}}}
\newcommand{\pr}{\text{\textnormal{\textnormal{pr}}}}
\newcommand{\Ind}{\text{\textnormal{\textnormal{Ind}}}}
\newcommand{\Par}{\text{\textnormal{Par}}}
\newcommand{\Cob}{\mathsf{Cob}}
\newcommand{\Vect}{\mathsf{Vect}}
\newcommand{\Grpd}{\mathsf{Grpd}}
\newcommand{\Grp}{\mathsf{Grp}}
\newcommand{\id}{\text{\textnormal{id}}}
\newcommand{\Fund}{\text{\textnormal{Fund}}}
\newcommand{\TFT}{\mathsf{TFT}}
\definecolor{Myblue}{rgb}{0,0,0.6}
\providecommand*{\twoheadrightarrowfill@}{%
  \arrowfill@\relbar\relbar\twoheadrightarrow
}
\providecommand*{\xtwoheadrightarrow}[2][]{%
  \ext@arrow 0579\twoheadrightarrowfill@{#1}{#2}%
}
\newcommand{\xRightarrow}[2][]{\ext@arrow 0359\Rightarrowfill@{#1}{#2}}
\providecommand\@dotsep{5}
\renewcommand{\listoftodos}[1][\@todonotes@todolistname]{%
  \@starttoc{tdo}{#1}}
\begin{document}

\title[Orientation twisted homotopy field theory]{Orientation twisted homotopy field theories and twisted unoriented Dijkgraaf--Witten theory}

\author[M.\,B. Young]{Matthew B. Young}
\address{Max Planck Institute for Mathematics\\
Vivatsgasse 7\\
53111 Bonn, Germany}
\email{myoung@mpim-bonn.mpg.de}

\date{\today}

\keywords{Topological field theory. Dijkgraaf--Witten theory. Orbifolding.}
\subjclass[2010]{Primary: 81T45; Secondary 20L05.}

\begin{abstract}
Given a finite $\mathbb{Z}_2$-graded group $\hat{\mathsf{G}}$ with ungraded subgroup $\mathsf{G}$ and a twisted cocycle $\hat{\lambda} \in Z^n(B \hat{\mathsf{G}}; \mathsf{U}(1)_{\pi})$ which restricts to $\lambda \in Z^n(B \mathsf{G}; \mathsf{U}(1))$, we construct a lift of $\lambda$-twisted $\mathsf{G}$-Dijkgraaf--Witten theory to an unoriented topological quantum field theory. Our construction uses a new class of homotopy field theories, which we call orientation twisted. We also introduce an orientation twisted variant of the orbifold procedure, which produces an unoriented topological field theory from an orientation twisted $\mathsf{G}$-equivariant topological field theory.
\end{abstract}

\maketitle


\setcounter{footnote}{0}

\section*{Introduction}
\addtocontents{toc}{\protect\setcounter{tocdepth}{1}}

Fix $n \geq 1$. Let $\mathsf{G}$ be a finite group and let $\lambda \in Z^n(B \mathsf{G}; \mathsf{U}(1))$ be an $n$-cocycle on the classifying groupoid $B \mathsf{G}$. The associated Dijkgraaf--Witten theory $\mathcal{Z}_{\mathsf{G}}^{\lambda}$ is an $n$-dimensional oriented topological quantum field theory. In this paper we regard $\mathcal{Z}_{\mathsf{G}}^{\lambda}$ as a once-extended theory, so that it can be evaluated on oriented manifolds of codimension $0$, $1$ and $2$. The theory $\mathcal{Z}_{\mathsf{G}}^{\lambda}$ was first introduced in \cite{dijkgraaf1990} as a finite version of Chern--Simons theory \cite{witten1989} and has since been developed from a number of different perspectives \cite{freed1993}, \cite{freed1994}, \cite{morton2015}, \cite{heuts2014}, \cite{sharma2017}. One reason for the enduring interest in Dijkgraaf--Witten theory is that it is a topological field theory which is both interesting and amenable to direct study. It is therefore beneficial to develop techniques in this finite setting before approaching more complicated topological field theories, such as Chern--Simons theory. On the other hand, Dijkgraaf--Witten theory itself has found applications in many areas of mathematics and physics, such as low dimensional topology, representation theory, tensor category theory and, more recently, condensed matter physics.

The main result of the present paper, which is stated as Theorem \ref{thm:unoriDW}, is a geometric construction of a class of unoriented lifts of Dijkgraaf--Witten theory. More precisely, for each Real structure $\hat{\mathsf{G}}$ on $\mathsf{G}$, that is, a short exact sequence of finite groups
\[
1 \rightarrow \mathsf{G} \rightarrow \hat{\mathsf{G}} \xrightarrow[]{\pi} \mathbb{Z}_2 \rightarrow 1,
\]
and lift of $\lambda$ to a $\pi$-twisted $n$-cocycle $\hat{\lambda} \in Z^n(B \hat{\mathsf{G}}; \mathsf{U}(1)_{\pi})$, we construct an $n$-dimensional unoriented topological quantum field theory $\mathcal{Z}_{\hat{\mathsf{G}}}^{\hat{\lambda}}$ whose oriented restriction is $\mathcal{Z}_{\mathsf{G}}^{\lambda}$. The theories $\mathcal{Z}_{\hat{\mathsf{G}}}^{\hat{\lambda}}$ recover as special cases the previously known unoriented lifts of Dijkgraaf--Witten theory. When $\hat{\mathsf{G}}=\mathsf{G} \times \mathbb{Z}_2$ and $\hat{\lambda} = 1$, we recover the unoriented lift determined by the stack $\BBun_{\mathsf{G}}(-)$ of principal $\mathsf{G}$-bundles (viewed as a functor out of the unoriented cobordism category) \cite{freed1993}, while for $n=2$, the trivial Real structure $\hat{\mathsf{G}} = \mathsf{G} \times \mathbb{Z}_2$ and particular $\hat{\lambda}$ we recover the state sum theories of \cite{turaev2007}, although in a different realization. On the other hand, both physical and abstract mathematical arguments, the latter involving the cobordism hypothesis \cite{lurie2009} or methods from stable homotopy theory, have been used to assert the existence of an unoriented topological field theory attached to a pair $(\hat{\mathsf{G}}, \hat{\lambda})$ as above \cite{freed2014}, \cite{kapustin2014}, \cite{freed2016}. Theorem \ref{thm:unoriDW} gives a concrete and direct geometric realization of these theories.

Our construction of $\mathcal{Z}_{\hat{\mathsf{G}}}^{\hat{\lambda}}$ is as a composition
\[
\mathcal{Z}_{\hat{\mathsf{G}}}^{\hat{\lambda}}: \Cob_{\langle n, n-1,n-2 \rangle} \xrightarrow[]{\mathcal{A}_{\hat{\mathsf{G}}}^{\hat{\lambda}}} 2\Vect_{\mathbb{C}}(\Grpd) \xrightarrow[]{\textnormal{Par}} 2\Vect_{\mathbb{C}}.
\]
Here $\Cob_{\langle n, n-1,n-2 \rangle}$ is the $n$-dimensional unoriented cobordism bicategory, $2\Vect_{\mathbb{C}}$ is the bicategory of Kapranov--Voevodsky $2$-vector spaces, $2\Vect_{\mathbb{C}}(\Grpd)$ is the bicategory of $2$-vector bundles on spans of groupoids, as constructed in \cite{morton2015}, \cite{schweigert2019}, \cite{haugseng2018}, and $\textnormal{Par}$ is the linearization functor of \cite{morton2015}, \cite{schweigert2018b}. The main problem, whose solution we outline below, is therefore to define $\mathcal{A}_{\hat{\mathsf{G}}}^{\hat{\lambda}}$, which we regard as a classical unoriented topological $\mathsf{G}$-gauge theory with Lagrangian $\hat{\lambda}$. As expected from the Lagrangian point of view, we prove that the values of $\mathcal{A}_{\hat{\mathsf{G}}}^{\hat{\lambda}}$ can be computed from the data $(\hat{\mathsf{G}}, \hat{\lambda})$ via a suitable pushforward procedure; see Theorem \ref{thm:closedPartition}. For example, for a closed unoriented (and possibly nonorientable) $(n-2)$-manifold $X$, the underlying groupoid of $\mathcal{A}_{\hat{\mathsf{G}}}^{\hat{\lambda}}(X)$ is the groupoid $\BBun_{\hat{\mathsf{G}}}^{\ori}(X)$ of principal $\hat{\mathsf{G}}$-bundles on $X$ with an $\ori_X$-twisted reduction of structure group to $\mathsf{G}$, where $\ori_X \rightarrow X$ is the orientation double cover. The additional cohomological data on $\BBun_{\hat{\mathsf{G}}}^{\ori}(X)$ is constructed from a flat $\mathsf{U}(1)$-gerbe on $\BBun_{\hat{\mathsf{G}}}^{\ori}(X)$, which is in turn obtained as the image of $\hat{\lambda}$ under an orientation twisted transgression map
\[
\uptau_{X}^{\ori}: Z^n(B \hat{\mathsf{G}}; \mathsf{U}(1)_{\pi}) \rightarrow Z^{2}(\BBun_{\hat{\mathsf{G}}}^{\ori}(X); \mathsf{U}(1)).
\]
This transgression map is introduced in Section \ref{sec:unoriTrans}.

In order to construct $\mathcal{A}_{\hat{\mathsf{G}}}^{\hat{\lambda}}$, we first introduce the notion of orientation twisted homotopy field theory, which is of independent interest. Such a theory depends on a chosen topological double cover $\Pi: T \rightarrow \hat{T}$. The relevant cobordism bicategory $\hat{T} \mhyphen \Cob^{\Pi}_{\langle n, n-1,n-2\rangle}$ is one of unoriented cobordisms with maps to $\hat{T}$ together with lifts to $\mathbb{Z}_2$-equivariant maps from the orientation double cover of the cobordism to $T$. Our definition of $\hat{T} \mhyphen \Cob^{\Pi}_{\langle n, n-1,n-2\rangle}$ is motivated by Atiyah's oriented cobordism categories with coefficients in a double cover \cite{atiyah1961} and by mathematical approaches to unoriented Wess--Zumino--Witten theory \cite{schreiber2007} and string theory with orientifolds \cite{distler2011b}. In the non-extended setting, various specializations of orientation twisted field theories have been studied under different names; see \cite{tagami2012}, \cite{sweet2013}, \cite{kapustin2017}. A key result of the present paper is Theorem \ref{thm:unoriPrim}, which associates to an $n$-cocycle $\hat{\lambda} \in Z^n(\hat{T}; \mathsf{U}(1)_{T})$, whose coefficients are twisted by the double cover $T \rightarrow \hat{T}$, an $n$-dimensional orientation twisted homotopy field theory $\mathcal{P}_{\hat{T}}^{\hat{\lambda}}$. Geometrically, $\mathcal{P}_{\hat{T}}^{\hat{\lambda}}$ can be understood as parallel transport along the Jandl $(n-1)$-gerbe $\hat{\lambda}$. The oriented theory which underlies $\mathcal{P}_{\hat{T}}^{\hat{\lambda}}$ is that determined by the pullback $\lambda \in Z^n(T; \mathsf{U}(1))$ of $\hat{\lambda}$, as constructed by Turaev \cite{turaev2010} and Turner \cite{turner2004} in the non-extended setting and by M\"uller--Woike \cite{muller2018} in the once-extended setting.

The connection between Dijkgraaf--Witten theory and homotopy field theory arises when the double cover $T \rightarrow \hat{T}$ is taken to be the (geometric realization of the) map of classifying groupoids $B \mathsf{G} \rightarrow B \hat{\mathsf{G}}$. In this setting, homotopy field theories are called $\mathsf{G}$-equivariant field theories. Our main result about $\mathsf{G}$-equivariant theories is Theorem \ref{thm:unoriOrbifold}, which is a generalization of the oriented orbifold construction of Schweigert--Woike \cite{schweigert2019}, \cite{schweigert2018}. The theorem allows us to functorially associate to each orientation twisted $\mathsf{G}$-equivariant topological quantum field theory an unoriented topological field theory with target $2\Vect_{\mathbb{C}}(\Grpd)$. The desired theory $\mathcal{A}_{\hat{\mathsf{G}}}^{\hat{\lambda}}$ can then be defined to be the orientation twisted orbifold of the $\mathsf{G}$-equivariant theory $\mathcal{P}_{B \hat{\mathsf{G}}}^{\hat{\lambda}}$.

There are a number of reasons to be interested in unoriented topological field theory in general and unoriented Dijkgraaf--Witten theory in particular. Aside from purely topological applications, unoriented topological field theories are central to the classification of symmetry protected topological phases of matter with time reversal symmetry \cite{kapustin2014}, \cite{freed2016}, \cite{barkeshli2016}, \cite{kapustin2017}, \cite{freed2018}. In this context, Dijkgraaf--Witten theories play an important role due their relation to Kitaev's quantum double model \cite{kitaev2003}, \cite{barkeshli2016}. As a rather different example, the fully-extended variant of $\mathcal{Z}_{\hat{\mathsf{G}}}^{\hat{\lambda}}$ can be seen as a physical realization of the Real representation theory of the higher categorical group determined by $(\mathsf{G}, \lambda)$ \cite{mbyoung2018c}.

The results of this paper suggest a number of follow-up problems. Perhaps the most interesting, which will be the subject of a forthcoming paper, is to study the algebraic structures encoded by $\mathcal{Z}_{\hat{\mathsf{G}}}^{\hat{\lambda}}$ in dimension three. While there is a classification of once-extended oriented topological field theories \cite{bartlett2015}, there is at present no such unoriented classification. The theories $\mathcal{Z}_{\hat{\mathsf{G}}}^{\hat{\lambda}}$ provide a simple class of examples which can be used to explicate the additional structure on the modular tensor category determined by the oriented theory. In the same vein, $\mathcal{A}_{\hat{\mathsf{G}}}^{\hat{\lambda}}$ can be used to study unoriented equivariant three dimensional theories and the resulting Real  generalizations of the $\mathsf{G}$-crossed modular tensor categories. It would be interesting to compare these algebraic structures with the input data of the recently proposed unoriented extension of Turaev--Viro--Barrett--Westbury theory \cite{barkeshli2016}, \cite{bhardwaj2017}. Finally, our geometric approach admits a natural generalization to allow for defects and boundary conditions, along the lines of the oriented case \cite{fuchs2014}. Calculations in the case of Dijkgraaf--Witten theory will shed light on the as-of-yet undeveloped theory of defects in unoriented theories. In three dimensions, this will lead to a refinement of the bimodule classification from the oriented case \cite{fuchs2013}. Finally, we expect that discrete torsion in string and $M$-theory with orientifolds (see \cite{bantay2003}, \cite{sharpe2011}) can be understood in terms of the orientation twisted orbifold construction, by first tensoring the $\mathsf{G}$-equivariant theory describing (mem)branes on a $\mathsf{G}$-space with $\mathcal{Z}_{\hat{\mathsf{G}}}^{\hat{\lambda}}$, and then orbifolding.

The structure of this paper is as follows. Section \ref{sec:backMat} contains relevant background material. In Section \ref{sec:oriTwistHQFT} we introduce orientation twisted homotopy field theories and construct the theories $\mathcal{P}^{\hat{\lambda}}_{\hat{T}}$. In Section \ref{sec:twistEquivThy} we study orientation twisted equivariant theories and describe the orientation twisted orbifold construction. Finally, in Section \ref{sec:twistUnoriDW} we construct twisted unoriented Dijkgraaf--Witten theory, studying in detail the theory in dimensions one and two.

\subsubsection*{Acknowledgements}
The author would like to thank Jeffrey Morton, Lukas M\"{u}ller and Mark Penney for discussions related to the content of this paper.

\section{Background material}
\addtocontents{toc}{\protect\setcounter{tocdepth}{2}}
\label{sec:backMat}

\subsection{Homotopy limits}
\label{sec:hFibProd}

The homotopy limit of a diagram of groupoids of the form
\[
\begin{tikzpicture}[baseline= (a).base]
\node[scale=1] (a) at (0,0){
\begin{tikzcd}
{} & \mathcal{Y} \arrow{d}[right]{G} \\
\mathcal{X} \arrow{r}[below]{F} & \mathcal{Z}
\end{tikzcd}
};
\end{tikzpicture}
\]
is denoted by $\mathcal{X} \times_{\mathcal{Z}}^h \mathcal{Y}$. This groupoid fits into a homotopy commutative diagram
\[
\begin{tikzpicture}[baseline= (a).base]
\node[scale=1] (a) at (0,0){
\begin{tikzcd}
\mathcal{X} \times^h_{\mathcal{Z}} \mathcal{Y} \arrow{r} \arrow{d} & \mathcal{Y} \arrow{d}[right]{G} \arrow[Rightarrow,shorten <= 0.75em, shorten >= 0.75em]{dl}[above left]{\eta}\\
\mathcal{X} \arrow{r}[below]{F} & \mathcal{Z} 
\end{tikzcd}
};
\end{tikzpicture}
\]
and is characterized as being $2$-universal among all such groupoids. We will use the following model for $\mathcal{X} \times_{\mathcal{Z}}^h \mathcal{Y}$. Objects are triples $(x,y; \varphi)$ consisting of $x \in \mathcal{X}$ and $y \in \mathcal{Y}$ and a morphism $\varphi: G(y) \rightarrow F(x)$. A morphism $(x,y; \varphi) \rightarrow (x^{\prime},y^{\prime}; \varphi^{\prime})$ is a pair $(f,g)$ consisting of morphisms $f: x \rightarrow x^{\prime}$ and $g: y \rightarrow y^{\prime}$ for which the diagram
\[
\begin{tikzpicture}[baseline= (a).base]
\node[scale=1] (a) at (0,0){
\begin{tikzcd}
F(x) \arrow{r}[above]{F(f)} & F(x^{\prime}) \\
G(y) \arrow{u}[left]{\varphi} \arrow{r}[below]{G(g)} & G(y^{\prime}) \arrow{u}[right]{\varphi^{\prime}}
\end{tikzcd}
};
\end{tikzpicture}
\]
commutes. The component of $\eta$ at $(x,y;\varphi) \in \mathcal{X} \times_{\mathcal{Z}}^h \mathcal{Y}$ is $\varphi$.

When $F$ is the inclusion of an object $z \in \mathcal{Z}$, the homotopy limit $\{z\} \times^h_{\mathcal{Z}} \mathcal{Y}$ is called the homotopy fibre of $F$ over $z$ and is denoted by $RF^{-1}(z)$.

We will use the following basic result below.

\begin{Lem}
\label{lem:2Fibre}
Let groups $\mathsf{G}$ and $\mathsf{H}$ act on sets $X$ and $Y$, respectively, with the latter action being trivial. Let $\pi: \mathsf{G} \rightarrow \mathsf{H}$ be a surjective group homomorphism and let $F: X \rightarrow Y$ be a $\pi$-equivariant map. Then the homotopy fibre over $y \in Y \git \mathsf{H}$ of the induced functor $\widetilde{F}: X \git \mathsf{G} \rightarrow Y \git \mathsf{H}$ is equivalent to $F^{-1}(y) \git \ker(\pi)$.
\end{Lem}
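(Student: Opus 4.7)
The plan is to unwind the explicit model of the homotopy fibre given earlier and exhibit a direct equivalence of groupoids. An object of $R\widetilde{F}^{-1}(y)$ is a triple $(y,x;\varphi)$ with $x \in X$ and $\varphi: \widetilde{F}(x) \to y$ a morphism in $Y \git \mathsf{H}$. Because $\mathsf{H}$ acts trivially on $Y$, such a $\varphi$ exists only if $F(x) = y$, and in that case $\varphi$ is simply an arbitrary element $h \in \mathsf{H}$. A morphism $(y,x;h) \to (y,x';h')$ is then an element $g \in \mathsf{G}$ satisfying $g \cdot x = x'$ and $h' \pi(g) = h$ in $\mathsf{H}$.

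Next, I would define a functor
\[
\Phi: F^{-1}(y) \git \ker(\pi) \longrightarrow R\widetilde{F}^{-1}(y)
\]
by sending an object $x$ to $(y, x; 1_{\mathsf{H}})$ and a morphism $k: x \to k \cdot x$ (with $k \in \ker(\pi)$) to the same element $k$, regarded now as a $\mathsf{G}$-morphism. This is well-defined: the source satisfies $F(x) = y$, and the compatibility $1 \cdot \pi(k) = 1$ holds precisely because $k \in \ker(\pi)$.

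It then remains to verify that $\Phi$ is an equivalence. For essential surjectivity, given an arbitrary object $(y, x; h)$, I would use surjectivity of $\pi$ to pick $g \in \mathsf{G}$ with $\pi(g) = h$; since the $\mathsf{H}$-action on $Y$ is trivial and $F$ is $\pi$-equivariant, $F(g \cdot x) = \pi(g) \cdot F(x) = y$, and the element $g$ furnishes an isomorphism $(y,x;h) \cong (y, g \cdot x; 1_{\mathsf{H}}) = \Phi(g \cdot x)$. For fully faithfulness, the morphisms $(y,x;1) \to (y,x';1)$ in $R\widetilde{F}^{-1}(y)$ are exactly those $g \in \mathsf{G}$ with $g \cdot x = x'$ and $\pi(g) = 1$, which coincide on the nose with the morphisms $x \to x'$ in $F^{-1}(y) \git \ker(\pi)$.

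The argument is essentially a bookkeeping exercise; the only subtlety is keeping track of the direction of $\varphi$ and the compatibility condition for morphisms in the homotopy fibre. Once these are set up correctly, surjectivity of $\pi$ (used to realize every $h \in \mathsf{H}$ as $\pi(g)$) and the characterization of $\ker(\pi)$ as the fibre of $\pi$ over $1$ deliver essential surjectivity and fully faithfulness, respectively, with no further input needed.
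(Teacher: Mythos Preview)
Your proof is correct and follows essentially the same approach as the paper: both unwind the explicit model of the homotopy fibre, use surjectivity of $\pi$ to show every object is isomorphic to one with $h = e$, and identify the automorphisms of such objects with $\ker(\pi)$. The paper phrases the intermediate step as an equivalence $R\widetilde{F}^{-1}(y) \simeq (F^{-1}(y) \times \mathsf{H}) \git \mathsf{G}$, while you construct the functor $\Phi$ directly and verify it is an equivalence, but the content is the same.
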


\begin{proof}
The homotopy fibre $R\widetilde{F}^{-1}(y)$ is the groupoid with objects
\[
\Obj(R\widetilde{F}^{-1}(y)) = \{(x,h) \in X \times \mathsf{H} \mid h \cdot F(x) = y\}
\]
and morphisms
\[
\Hom_{R\widetilde{F}^{-1}(y)}((x,h), (x^{\prime},h^{\prime})) =\{g \in \mathsf{G} \mid g \cdot x = x^{\prime}, \; h^{\prime} \pi(g) = h\}.
\]
The stated assumptions imply that $\Obj(R\widetilde{F}^{-1}(y)) = F^{-1}(y) \times \mathsf{H}$. The explicit form of morphisms in $RF^{-1}(y)$ then gives $R\widetilde{F}^{-1}(y) \simeq (F^{-1}(y) \times \mathsf{H}) \git \mathsf{G}$, where $\mathsf{G}$ acts on $F^{-1}(y)$ and $\mathsf{H}$ through its action on $X$ and $\pi$, respectively. Since $\pi$ is surjective, every object of $R\widetilde{F}^{-1}(y)$ is isomorphic to one of the form $(x^{\prime},e)$, where $e \in \mathsf{H}$ is the identity. The claimed equivalence follows.
\end{proof}

\subsection{Relative mapping \texorpdfstring{$2$}{}-groupoids}
\label{sec:relMapGrpd}

Denote by $I$ the closed interval $[0,1]$. Let $X$ and $Y$ be topological spaces. Recall that the mapping $2$-groupoid $\MMap^{\leq 2}(X,Y)$ is the bicategory whose objects are continuous maps $X \rightarrow Y$ and whose $1$- and $2$-morphisms are homotopies and equivalence classes of homotopies of homotopies relative $X \times \partial I$, respectively. Two homotopies $\eta, \eta^{\prime}: X \times I \times I \rightarrow Y$ relative $X \times \partial I$ are called equivalent, written $\eta \simeq \eta^{\prime}$, if they are homotopic relative $X \times \partial(I \times I)$.

Fix a topological space $B$.

\begin{Def}
Let $X \xrightarrow[]{\pi_X} B$ and $Y \xrightarrow[]{\pi_Y} B$ be topological spaces over $B$. The relative mapping $2$-groupoid $\MMap^{\leq 2}_B(X, Y)$ is the homotopy fibre over $\pi_X$ of the pseudofunctor
\[
\pi_Y \circ (-): \MMap^{\leq 2}(X, Y) \rightarrow \MMap^{\leq 2}(X, B).
\]
\end{Def}

An explicit description of $\MMap^{\leq 2}_B(X, Y)$ is as follows.\footnote{This is a categorification of the model for homotopy limits of groupoids described in Section \ref{sec:hFibProd}.}
\begin{enumerate}[label=(\roman*)]
\item Objects are pairs $(f;m)$ consisting of a continuous map $f:X \rightarrow Y$ and a homotopy $m: X \times I \rightarrow B$ from $\pi_Y \circ f$ to $\pi_X$.

\item $1$-morphisms $(f_1; m_1) \rightarrow (f_2; m_2)$ are pairs $(F; M)$ consisting of a homotopy $F: X \times I \rightarrow Y$ from $f_1$ to $f_2$ and an equivalence class of homotopies $M: X \times I \times I \rightarrow B$ relative $X \times \partial I$ from $m_2 * \pi_Y(F)$ to $m_1$, where $*$ denotes composition (concatenation) of homotopies.

\item $2$-morphisms
\[
\begin{tikzcd}[column sep=8em]
(f_1; m_1)
  \arrow[bend left=12]{r}[name=U,below]{}{(F_1; M_1)} 
  \arrow[bend right=12]{r}[name=D]{}[swap]{(F_2; M_2)}
& 
(f_2; m_2)
\arrow[shorten <=0.15pt,shorten >=0.15pt,Rightarrow,to path={(U) -- (D)}]{}
\end{tikzcd}
\]
are equivalence classes of homotopies $\varphi: X \times I \times I \rightarrow Y$ relative $X \times \partial I$ from $F_1$ to $F_2$ such that
\[
M_2 * (m_2 * \pi_Y(\varphi)) \simeq M_1.
\]
Here $m_2$ denotes the extension of $m_2: X \times I \rightarrow B$ to a map $X \times I \times I \rightarrow B$ which is constant along the second factor of $I$. Equivalently, $m_2 * \pi_Y(\varphi)$ is the whiskering of the $1$-morphism $m_2$ with the $2$-morphism $\pi_Y(\varphi)$.
\end{enumerate}

Similarly, one defines the relative mapping groupoid $\MMap_B(X,Y)$. This groupoid is equivalent to the homotopy groupoid of $\MMap^{\leq 2}_B(X,Y)$. Truncating the explicit description of $\MMap^{\leq 2}_B(X,Y)$ recovers that of $\MMap_B(X,Y)$ which arises from the model for homotopy fibres described Section \ref{sec:hFibProd}.

Finally, there is a relative mapping space $\Map_B(X, Y)$, defined as the homotopy fibre over $\pi_X$ of the continuous map of topological spaces
\[
\pi_Y \circ (-): \Map(X, Y) \rightarrow \Map(X, B).
\]

\begin{Lem}
\label{lem:asphericalMappingGrpd}
Suppose that $Y$ and $B$ are aspherical.
\begin{enumerate}[label=(\roman*)]
\item There is a canonical biequivalence $\MMap_B^{\leq 2}(X,Y) \xrightarrow[]{\sim} \MMap_B(X,Y)$.

\item The geometric realization of $\MMap_B(X, Y)$ and the topological space $\Map_B(X, Y)$ are homotopy equivalent.

\item The topological space $\Map_B(X, Y)$ is aspherical.
\end{enumerate}
\end{Lem}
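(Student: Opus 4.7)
The plan is to reduce all three parts to the single observation that, when $Y$ and $B$ are aspherical, the mapping spaces $\Map(X,Y)$ and $\Map(X,B)$ are themselves aspherical. This follows from the standard computation that $\pi_k(\Map(X,Z), f) = 0$ for $k \geq 2$ whenever $\pi_k(Z) = 0$ for $k \geq 2$, which can be obtained, for example, by an obstruction-theoretic or twisted cohomology argument.

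For part (i), I would consider the canonical truncation pseudofunctor $\MMap^{\leq 2}(X,Z) \rightarrow \MMap(X,Z)$ (for $Z = Y$ or $Z = B$), which is the identity on objects and on $1$-morphisms. Asphericity of $\Map(X,Z)$ implies that between any two parallel $1$-morphisms in $\MMap^{\leq 2}(X,Z)$ there is at most one $2$-morphism: the vanishing of $\pi_2(\Map(X,Z), f)$ says precisely that any two $2$-homotopies with a common boundary are homotopic relative $X \times \partial(I \times I)$. Hence this truncation is a biequivalence in each case. It is moreover compatible with the pseudofunctors $\pi_Y \circ (-)$ and $\pi_B \circ (-)$ defining the relevant homotopy fibres, and since formation of a homotopy fibre over a fixed object $\pi_X$ is a biequivalence-invariant construction, the induced map $\MMap_B^{\leq 2}(X,Y) \to \MMap_B(X,Y)$ is also a biequivalence.

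For part (ii), I would combine two inputs. The first is that for an aspherical space $Z$ the canonical map $|\Pi_1(Z)| \to Z$ is a weak equivalence; applied to $Z = \Map(X,Y)$ and $Z = \Map(X,B)$ this yields $|\MMap(X,Y)| \simeq \Map(X,Y)$ and $|\MMap(X,B)| \simeq \Map(X,B)$. The second is that, after a suitable strictification of $\pi_Y \circ (-)$ to a map of groupoids whose nerve is a Kan fibration, geometric realization sends the homotopy pullback of groupoids defining $\MMap_B(X,Y)$ to the homotopy pullback of topological spaces defining $\Map_B(X,Y)$, yielding the required homotopy equivalence. Part (iii) then follows by applying the long exact sequence of homotopy groups to the homotopy fibre sequence
\[
\Map_B(X,Y) \rightarrow \Map(X,Y) \xrightarrow[]{\pi_Y \circ (-)} \Map(X,B)
\]
and using asphericity of the latter two spaces to force $\pi_k(\Map_B(X,Y)) = 0$ for all $k \geq 2$.

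The main obstacle will be the compatibility of geometric realization with homotopy fibres needed in part (ii). One must ensure that $\pi_Y \circ (-)$ is modelled, after strictification, by a map of groupoids whose nerve is a Kan fibration, or alternatively invoke a groupoid version of Quillen's Theorem B. Once this technical point is settled, the asphericity hypotheses on $Y$ and $B$ carry out the remaining homotopy-theoretic work.
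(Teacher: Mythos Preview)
Your proposal is correct and follows essentially the same route as the paper: reduce to asphericity of $\Map(X,Y)$ and $\Map(X,B)$, use that this makes the truncation $\MMap^{\leq 2}(X,Z)\to\MMap(X,Z)$ a biequivalence and that homotopy fibres are biequivalence-invariant for (i), combine $|\MMap(X,Z)|\simeq\Map(X,Z)$ with the compatibility of geometric realization with finite homotopy limits for (ii), and apply the long exact sequence for (iii). The only difference is that the paper dispatches your technical concern in (ii) by directly invoking the fact that geometric realization commutes with finite homotopy limits, rather than phrasing it via strictification or Quillen's Theorem~B.
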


\begin{proof}
It is well-known that asphericity of a topological space $Z$ implies that the canonical map $\MMap^{\leq 2}(X,Z) \rightarrow \MMap(X,Z)$ is a biequivalence. The first statement then follows from the fact that homotopy limits of biequivalent diagrams of $2$-groupoids are biequivalent.

The second statement follows from the fact that, by asphericity, $\Map(X, Y)$ and $\Map(X, B)$ are homotopy equivalent to the geometric realizations of $\MMap(X, Y)$ and $\MMap(X, B)$, respectively, together with the fact that geometric realization commutes with finite homotopy limits.

The final statement follows from the long exact sequence of homotopy groups associated to the fibration
\[
\Map_B(X, Y) \rightarrow \Map(X, Y) \xrightarrow[]{\pi_Y \circ (-)} \Map(X, B)
\]
and the asphericity of $\Map(X, Y)$ and $\Map(X, B)$.
\end{proof}

\subsection{Groupoids of principal bundles}
\label{sec:moduliBundles}

Let $M$ be a compact topological manifold, possibly with boundary. Given a finite group $\mathsf{G}$, denote by $\BBun_{\mathsf{G}}(M)$ the groupoid whose objects are principal $\mathsf{G}$-bundles $P \rightarrow M$ and whose morphisms $(P \rightarrow M) \rightarrow (P^{\prime} \rightarrow M)$ are $\mathsf{G}$-equivariant maps $P \rightarrow P^{\prime}$ which commute with the structure maps to $M$. The $\mathsf{G}$-action on the total space of a $\mathsf{G}$-bundle is from the right.

The groupoid $\BBun_{\mathsf{G}}(M)$ has a number of equivalent models. To describe the first, assume for simplicity that $M$ is connected and fix a basepoint $m_0 \in M$, which we henceforth omit from the notation. Then there is an equivalence
\[
\BBun_{\mathsf{G}}(M) \simeq \Hom_{\Grp}(\pi_1(M), \mathsf{G}) \git \mathsf{G},
\]
where $\mathsf{G}$ acts on $\Hom_{\Grp}(\pi_1(M), \mathsf{G})$ by conjugation. This equivalence sends a (necessarily flat) $\mathsf{G}$-bundle to its holonomy representation.

The second model is in terms of a classifying space $\mathbf{B} \mathsf{G}$ of $\mathsf{G}$. Namely, there is an equivalence
\[
\BBun_{\mathsf{G}}(M) \simeq \MMap(M, \mathbf{B} \mathsf{G}),
\]
a $\mathsf{G}$-bundle $P \rightarrow M$ being sent to a classifying map $f_P: M \rightarrow \mathbf{B} \mathsf{G}$ of $P$. The $\mathsf{G}$-bundle determined by a map $f: M \rightarrow \mathbf{B} \mathsf{G}$ will be denoted by $P_f \rightarrow M$.

\subsection{Spans of groupoids with local coefficients}
\label{sec:spanGrpd}

A groupoid $\mathcal{X}$ is called essentially finite if $\pi_0(\mathcal{X})$ is finite. The category of essentially finite groupoids is denoted by $\Grpd$. Unless explicitly mentioned otherwise, all groupoids in this paper are assumed to be essentially finite.

The bicategory $\mathsf{Span}(\Grpd)$ has groupoids as objects, spans of groupoids as $1$-morphisms and equivalence classes of spans of spans as $2$-morphisms. For precise definitions, see \cite{morton2011}. We require a decorated version of $\mathsf{Span}(\Grpd)$, in which objects and morphisms carry compatible local systems. A general construction in the context of $(\infty,n)$-categories has been developed in \cite{haugseng2018}. In the bicategorical setting there are explicit constructions \cite{schweigert2019}, \cite{schweigert2018b}.

Let $2\Vect_{\mathbb{C}}$ be the bicategory of Kapranov--Voevodsky $2$-vector spaces, that is, finitely semisimple $\mathbb{C}$-linear additive categories, $\mathbb{C}$-linear functors and natural transformations \cite{kapranov1994}. The Deligne product $\boxtimes$ gives $2\Vect_{\mathbb{C}}$ the structure of a symmetric monoidal bicategory.

Given a groupoid $\mathcal{X}$, the pseudofunctor bicategory $\Hom_{\mathsf{Bicat}}(\mathcal{X}, 2\Vect_{\mathbb{C}})$ is called the bicategory of $2$-vector bundles on $\mathcal{X}$ and is denoted by $2 \Vect_{\mathbb{C}}(\mathcal{X})$.

\begin{Def}[{\cite[Section 4.1]{schweigert2018b}}]
The bicategory $2\Vect_{\mathbb{C}}(\Grpd)$ is defined as follows.
\begin{itemize}
\item Objects are pairs $(\mathcal{X},\alpha)$ consisting of a groupoid $\mathcal{X}$ and a $2$-vector bundle $\alpha: \mathcal{X} \rightarrow 2\Vect_{\mathbb{C}}$.

\item A $1$-morphism $(\mathcal{X}_1,\alpha_1) \rightarrow (\mathcal{X}_2, \alpha_2)$ is a pair $(\mathcal{Y}, \beta)$ consisting of a span
\begin{equation}
\label{eq:1MorSpan}
\begin{tikzpicture}[baseline= (a).base]
\node[scale=1] (a) at (0,0){
\begin{tikzcd}[column sep=2.5em, row sep=0.25em]
{} & \mathcal{Y} \arrow{ld}[above]{s} \arrow{rd}[above]{t} & {} \\
\mathcal{X}_1 & {} & \mathcal{X}_2
\end{tikzcd}
};
\end{tikzpicture}
\end{equation}
and a $1$-morphism $\beta: s^*\alpha_1 \rightarrow t^* \alpha_2$ of $2$-vector bundles.

\item A $2$-morphism $(\mathcal{Y}_1, \beta_1) \Rightarrow (\mathcal{Y}_2, \beta_2)$ is an equivalence class of tuples $(\mathcal{Z},L,R, \gamma)$ consisting of a span of spans
\[
\begin{tikzpicture}[baseline= (a).base]
\node[scale=1] (a) at (0,0){
\begin{tikzcd}[column sep=2.0em, row sep=2.0em]
{} & \mathcal{Y}_1 \arrow{dl}[above left]{s_1} \arrow{dr}[above right]{t_1} \arrow[Rightarrow,bend right]{dd}[left]{L} \arrow[Rightarrow,bend left]{dd}[right]{R} & \\
\mathcal{X}_1 & \mathcal{Z} \arrow{u}[left]{\sigma} \arrow{d}[left]{\tau} & \mathcal{X}_2 \\
{} & \mathcal{Y}_2 \arrow{ul}[below left]{s_2} \arrow{ur}[below right]{t_2} & {}
\end{tikzcd}
};
\end{tikzpicture}
\]
and a $2$-morphism $\gamma: \alpha_2(R) \circ \sigma^* \beta_1 \Rightarrow \tau^* \beta_2 \circ \alpha_1(L)$. Two such tuples are called equivalent if there is an equivalence of spans of spans which respects the $2$-morphisms.
\end{itemize}
The composition of the $1$-morphisms $(\mathcal{X}_1, \alpha_1) \xrightarrow[]{(\mathcal{Y}_1, \beta_1)} (\mathcal{X}_2, \alpha_2)  \xrightarrow[]{(\mathcal{Y}_2, \beta_2)} (\mathcal{X}_3, \alpha_3)$ is the span
\[
\begin{tikzpicture}[baseline= (a).base]
\node[scale=1] (a) at (0,0){
\begin{tikzcd}[column sep=2.5em, row sep=0.25em]
{} & \mathcal{Y}_1 \times^h_{\mathcal{X}_2} \mathcal{Y}_2 \arrow{ld}[above left]{\tilde{s} \circ s_1} \arrow{rd}[above right]{\tilde{t} \circ t_2} & {} \\
\mathcal{X}_1 & {} & \mathcal{X}_3
\end{tikzcd}
};
\end{tikzpicture}
\]
together with the $1$-morphism of $2$-vector bundles
\[
\beta_1 \times^h_{\mathcal{X}_2} \beta_2:
\tilde{s}^* s_1^* \alpha_1 \xrightarrow[]{\tilde{s}^* \beta_1} \tilde{s}^* t_1^* \alpha_2 \rightarrow \tilde{t}^* s_2^* \alpha_2 \xrightarrow[]{\tilde{t}^* \beta_2} \tilde{t}^* t_2^* \alpha_3,
\]
the middle arrow being a coherence $2$-morphism for $\mathcal{Y} \times^h_{\mathcal{X}_2} \mathcal{Y}^{\prime}$.

Similarly, the vertical composition of the $2$-morphisms
\[
(\mathcal{Y}_1, \beta_1) \xRightarrow[]{(\mathcal{Z}_1, R_1,L_1,\gamma_1)} (\mathcal{Y}_2, \beta_2)  \xRightarrow[]{(\mathcal{Z}_2, R_2,L_2,\gamma_2)} (\mathcal{Y}_3, \beta_3)
\]
is
\[
\begin{tikzpicture}[baseline= (a).base]
\node[scale=1] (a) at (0,0){
\begin{tikzcd}[column sep=2.5em, row sep=0.25em]
& (\mathcal{Z}_1 \times^h_{\mathcal{Y}_2} \mathcal{Z}_2, \tilde{L}, \tilde{R}, \gamma \times^h_{\mathcal{Y}_2} \gamma^{\prime}) \arrow{ld}[above]{\tilde{\sigma}} \arrow{rd}[above]{\tilde{\tau}} & \\
(\mathcal{Y}_1, \beta_1) & & (\mathcal{Y}_3, \beta_3),
\end{tikzcd}
};
\end{tikzpicture}
\]
where $\gamma_1 \times^h_{\mathcal{Y}_2} \gamma_2$ is defined by a construction similar to that of $\beta_1 \times^h_{\mathcal{X}_2} \beta_2$. The reader is referred to \cite{schweigert2018b} for the definition of the horizontal composition of $2$-morphisms.
\end{Def}

Cartesian product of groupoids together with the Deligne product of $2$-vector spaces extend to define a symmetric monoidal structure on $2\Vect_{\mathbb{C}}(\Grpd)$.

\subsection{Twisted \texorpdfstring{$2$}{}-linearization}
\label{sec:twistLin}

Classical topological gauge theories can often be understood as topological field theories valued in (higher) categories of spans of groupoids with local systems. Their quantization can then be approached by post-composing with a suitable functor to a sufficiently linear target higher category \cite{freed2010}. In the $(\infty,n)$-categorical setting, such linearization functors were constructed by Haugseng \cite{haugseng2018} while in the present bicategorical setting, following earlier work of Morton \cite{morton2011}, \cite{morton2015} in the case without local systems, Schweigert--Woike \cite{schweigert2019}, \cite{schweigert2018b} constructed a symmetric monoidal pseudofunctor
\[
\Par: 2\Vect_{\mathbb{C}}(\Grpd) \rightarrow 2\Vect_{\mathbb{C}}.
\]
The pseudofunctor $\Par$ assigns to a $2$-vector bundle $\alpha: \mathcal{X} \rightarrow 2\Vect_{\mathbb{C}}$ its space of flat (or parallel) sections,
\[
\Par(\mathcal{X}, \alpha) = \Hom_{2 \Vect_{\mathbb{C}}(\mathcal{X})} (\Vect_{\mathbb{C} \vert \mathcal{X}}, \alpha),
\]
where $\Vect_{\mathbb{C} \vert \mathcal{X}}$ is the trivial $2$-line bundle on $\mathcal{X}$. The $1$-morphism \eqref{eq:1MorSpan} is assigned the pushforward along $\beta$:
\[
\Par(\mathcal{Y}, \beta) : \Par(\mathcal{X}_1,\alpha_1) \xrightarrow[]{s^*}  \Par(\mathcal{X}_1,s^*\alpha_1) \xrightarrow[]{\beta_*} \Par(\mathcal{X}_2,t^*\alpha_2) \xrightarrow[]{t_*} \Par(\mathcal{X}_2,\alpha_2).
\]
For the definition of $\Par$ on $2$-morphisms and the verification that $\Par$ is a symmetric monoidal pseudofunctor, the reader is referred to \cite[\S 4.2]{schweigert2018b}.

\section{Orientation twisted extended homotopy field theories}
\label{sec:oriTwistHQFT}

In this section we define a generalization of unoriented homotopy quantum field theory. The construction is motivated by Atiyah's oriented cobordism groups with coefficients in a double cover \cite{atiyah1961}.

\subsection{Orientations}
\label{sec:orientations}

All manifolds are assumed to be smooth and compact. We allow manifolds to have corners. A manifold with empty boundary is called closed. Unless explicitly stated, all manifolds are assumed to be unoriented, and possibly nonorientable.

Let $\mathbb{Z}_2$ be the multiplicative group $\{ \pm 1\}$. Recall that each  manifold $M$ has a canonical orientation double cover $\ori_M \rightarrow M$. We fix a classifying map $M \rightarrow \mathbf{B} \mathbb{Z}_2$ of the orientation cover which, if it will not lead to confusion, we also denote by $\ori_M$. If $M$ is of dimension $n$, then $\ori_M$ can be constructed as the composition of a classifying map $M \rightarrow \mathbf{B} \mathsf{O}_n$ of the tangent bundle $TM \rightarrow M$ with the canonical map $w_1: \mathbf{B} \mathsf{O}_n \rightarrow \mathbf{B} \mathbb{Z}_2$. The manifold $\ori_M$ is canonically oriented, and this orientation defines the fundamental class $[M] \in H_n(M, \partial M; \mathbb{Z}_{\ori_M})$ of $M$.

\subsection{(Un)oriented cobordism bicategories}
\label{sec:unoriCobord}

Let $T$ be a topological space. Denote by $T \mhyphen \Cob^{\ori}_{\langle n, n-1, n-2 \rangle}$ the bicategory of $n$-dimensional oriented (compact) cobordisms with continuous maps to $T$. An object of $T \mhyphen \Cob^{\ori}_{\langle n, n-1, n-2 \rangle}$ is a closed oriented $(n-2)$-manifold with a continuous map to $T$. A $1$-morphism is an $(n-1)$-dimensional oriented collared cobordism with a continuous map to $T$ which is compatible with the boundaries. A $2$-morphism is an $n$-dimensional oriented collared cobordism with corners with a compatible continuous map to $T$. For precise definitions, see \cite[\S I.1]{turaev2010} in the setting of cobordism categories and \cite[\S 2.1]{schweigert2018} in the once-extended setting. When $T$ is a point, $T \mhyphen \Cob^{\ori}_{\langle n, n-1, n-2 \rangle}$ reduces to the oriented cobordism bicategory $\Cob^{\ori}_{\langle n, n-1, n-2 \rangle}$ of \cite[Chapter 3]{schommer2011}.

There is an unoriented variant $T \mhyphen \Cob_{\langle n, n-1, n-2 \rangle}$ of $T \mhyphen \Cob^{\ori}_{\langle n, n-1 ,n-2 \rangle}$, defined in the same way as $T \mhyphen \Cob^{\ori}_{\langle n, n-1, n-2 \rangle}$ but with all orientation data omitted.

Disjoint union gives $T \mhyphen \Cob^{(\ori)}_{\langle n, n-1, n-2 \rangle}$ the structure of a symmetric monoidal bicategory. Forgetting orientations defines a symmetric monoidal pseudofunctor
\begin{equation}
\label{eq:forgetFunctor}
\mathcal{F}: T \mhyphen \Cob^{\ori}_{\langle n, n-1, n-2 \rangle} \rightarrow T \mhyphen \Cob_{\langle n, n-1,n-2 \rangle}.
\end{equation}

\subsection{Orientation twisted cobordism bicategories}
\label{sec:oriTwistCobord}

In this section we introduce the cobordism bicategory which underlies orientation twisted homotopy field theory.

Let $\Pi: \hat{T} \rightarrow \mathbf{B} \mathbb{Z}_2$ be a topological space over $\mathbf{B} \mathbb{Z}_2$. The map $\Pi$ classifies a double cover, which we denote by $\rho: T \rightarrow \hat{T}$.

While the following definition is rather involved, the basic idea is simple: replace all occurrences of orientations in $T \mhyphen \Cob^{\ori}_{\langle n, n-1,n-2\rangle}$ with the data of maps of spaces over $\mathbf{B} \mathbb{Z}_2$.

\begin{Def}
The orientation twisted cobordism bicategory $\hat{T} \mhyphen \Cob^{\Pi}_{\langle n, n-1, n-2 \rangle}$ is defined as follows:
\begin{itemize}[leftmargin=1\parindent]
\item An object is a triple $(X, f; h)$ consisting of a closed $(n-2)$-manifold $X$, a continuous map $f: X \rightarrow \hat{T}$ and an equivalence class of homotopies $h: X \times I \rightarrow \mathbf{B} \mathbb{Z}_2$ from $\Pi \circ f$ to $\ori_X$.

\item A $1$-morphism $(X_1, f_1; h_1) \rightarrow (X_2, f_2; h_2)$ is a triple $((Y; o_1, o_2),F; H)$ consisting of a collared cobordism
\[
\begin{tikzpicture}[baseline= (a).base]
\node[scale=1] (a) at (0,0){
\begin{tikzcd}[column sep=2.5em, row sep=0.25em]
& Y \arrow[hookleftarrow]{dr}[above]{i_2} & \\ X_1 \times [0,1) \arrow[hookrightarrow]{ur}[above]{i_1} & & X_2 \times (-1,0]
\end{tikzcd}
};
\end{tikzpicture}
\]
with equivalence classes of homotopies
\[
o_1: X_1 \times [0,1) \times I \rightarrow \mathbf{B} \mathbb{Z}_2, \qquad o_2: X_1 \times (-1,0] \times I \rightarrow \mathbf{B} \mathbb{Z}_2
\]
from\footnote{We consider $[0,1)$ with its standard orientation and thereby identify $\ori_{(-) \times [0,1)}$ with $\ori_{(-)}$. Similar identifications will be made below without comment.} $\ori_{X_k}$ to $i_k^* \ori_Y$, $k=1,2$, a continuous map $F: Y \rightarrow \hat{T}$ and an equivalence class of homotopies $H: Y \times I \rightarrow \mathbf{B} \mathbb{Z}_2$ from $\Pi \circ F$ to $\ori_Y$ such that the diagram
\[
\begin{tikzpicture}[baseline= (a).base]
\node[scale=1] (a) at (0,0){
\begin{tikzcd}[column sep=2.5em, row sep=0.25em]
& Y \arrow{dd}[left]{F} \arrow[hookleftarrow]{dr}[above right]{i_2} & \\
X_1 \times \{0\} \arrow{dr}[below left]{f_1} \arrow[hookrightarrow]{ur}[above left]{i_1} & & X_2 \times \{0\} \arrow{dl}[below right]{f_2}\\
& \hat{T} & 
\end{tikzcd}
};
\end{tikzpicture}
\]
commutes and the maps
\[
H \circ (i_{k \vert X_k \times \{0\}} \times \id_I) \; , \; o_k * h_k : X_k \times I \rightarrow B \mathbb{Z}_2
\]
are homotopic relative $X_k \times \partial I$, $k=1,2$.

\item A $2$-morphism
\[
\begin{tikzcd}[column sep=10em]
(X_1, f_1; h_1)
  \arrow[bend left=10]{r}[name=U,below]{}{((Y_1; o_{1,\bullet}),F_1;H_1)} 
  \arrow[bend right=10]{r}[name=D]{}[swap]{((Y_2; o_{2,\bullet}),F_2;H_2)}
& 
(X_2, f_2; h_2)
\arrow[shorten <=0.5pt,shorten >=0.5pt,Rightarrow,to path={(U) --(D)}]{}
\end{tikzcd}
\]
is an equivalence class of triples $((Z; \sigma_{\bullet}), \varphi; \eta)$ consisting of:
\begin{itemize}[leftmargin=1\parindent]
\item A cobordism $Z$ with corners from $Y_1$ to $Y_2$. This is a compact $\langle 2 \rangle$-manifold $Z$, with associated decomposition of its topological boundary $\partial Z = \partial_0 Z \cup \partial_1 Z$, with collars
\[
\begin{tikzpicture}[baseline= (a).base]
\node[scale=1] (a) at (0,0){
\begin{tikzcd}[column sep=2.5em, row sep=0.25em]
& Z \arrow[hookleftarrow]{dr}[above]{j_2} & \\ Y_1 \times [0,1) \arrow[hookrightarrow]{ur}[above]{j_1} & & Y_2 \times (-1,0]
\end{tikzcd}
};
\end{tikzpicture}
\]
of $\partial_0 Z$ together with equivalence classes of homotopies
\[
\sigma_1: Y_1 \times [0,1) \times I \rightarrow \mathbf{B} \mathbb{Z}_2,
\qquad
\sigma_2: Y_2 \times (-1,0] \times I \rightarrow \mathbf{B} \mathbb{Z}_2
\]
from $\ori_{Y_k}$ to $j_k^* \ori_Z$, $k=1,2$, and collars
\[
\begin{tikzpicture}[baseline= (a).base]
\node[scale=1] (a) at (0,0){
\begin{tikzcd}[column sep=2.5em, row sep=0.25em]
& Z \arrow[hookleftarrow]{dr}[above]{i^{\prime}_2} & \\ X_1 \times [0,1) \times [0,1] \arrow[hookrightarrow]{ur}[above]{i^{\prime}_1} & & X_2 \times (-1,0] \times [0,1]
\end{tikzcd}
};
\end{tikzpicture}
\]
of $\partial_1 Z$ together with equivalence classes of homotopies
\[
o^{\prime}_1: X_1 \times [0,1) \times [0,1] \times I \rightarrow \mathbf{B} \mathbb{Z}_2, \qquad
o^{\prime}_2: X_2 \times (-1,0] \times [0,1] \times I \rightarrow \mathbf{B} \mathbb{Z}_2
\]
from $\ori_{X_k}$ to $i_k^{\prime *} \ori_Z$, $k=1,2$, such that there exists an $\epsilon > 0$ for which the diagrams
\[
\begin{tikzpicture}[baseline= (a).base]
\node[scale=0.8] (a) at (0,0){
\begin{tikzcd}[column sep=6.0em, row sep=2.0em]
& Z \arrow[hookleftarrow]{dr}[above]{i^{\prime}_2} & \\ X_1 \times [0,1) \times [0, \epsilon) \arrow[hookrightarrow]{ur}[above]{i^{\prime}_1} \arrow[hookrightarrow]{r}[below]{i^{(1)}_1 \times \id_{[0,\epsilon)}} & Y_1 \times [0,\epsilon) \arrow[hookrightarrow]{u}[left]{j_1} \arrow[hookleftarrow]{r}[below]{i^{(1)}_2 \times \id_{[0,\epsilon)}} & X_2 \times (-1,0] \times [0,\epsilon)
\end{tikzcd}
};
\end{tikzpicture}
\]
and
\[
\begin{tikzpicture}[baseline= (a).base]
\node[scale=0.8] (a) at (0,0){
\begin{tikzcd}[column sep=6.0em, row sep=2.0em]
& Z \arrow[hookleftarrow]{dr}[above]{i^{\prime}_2} & \\ X_1 \times [0,1) \times (1- \epsilon,1] \arrow[hookrightarrow]{ur}[above]{i^{\prime}_1} \arrow[hookrightarrow]{r}[below]{i^{(2)}_1 \times (1-\id_{(1- \epsilon,1]})} & Y_2 \times (-\epsilon,0] \arrow[hookrightarrow]{u}[left]{j_2} \arrow[hookleftarrow]{r}[below]{i^{(2)}_2 \times (1-\id_{(1- \epsilon,1]})} & X_2 \times (-1,0] \times (1- \epsilon,1] 
\end{tikzcd}
};
\end{tikzpicture}
\]
commute. 

\item A continuous map $\varphi: Z \rightarrow \hat{T}$ such that the diagram
\[
\begin{tikzpicture}[baseline= (a).base]
\node[scale=1] (a) at (0,0){
\begin{tikzcd}[column sep=4.5em, row sep=0.25em]
& Z \arrow{dd}[left]{\varphi} \arrow[hookleftarrow]{dr}[above]{i_2^{\prime} \sqcup j_2} & \\
X_1 \times \{0\} \times [0,1] \sqcup Y_1 \arrow{dr}[below left]{f_1 \circ \pr_{X_1} \sqcup F_1} \arrow[hookrightarrow]{ur}[above]{i_1^{\prime} \sqcup j_1} & & X_2 \times \{0\} \times [0,1] \sqcup Y_2 \arrow{dl}[below right]{f_2 \circ \pr_{X_2} \sqcup F_2}\\
& \hat{T} & 
\end{tikzcd}
};
\end{tikzpicture}
\]
commutes.

\item An equivalence class of homotopies $\eta: Z \times I \rightarrow \mathbf{B} \mathbb{Z}_2$ from $\Pi \circ \varphi$ to $\ori_Z$ such that the maps
\[
\eta \circ (j_{k \vert Y_k \times \{0\}} \times \id_I) \; , \; \sigma_{k \vert Y_k \times \{0 \} \times I} * H_k : Y_k \times I \rightarrow \mathbf{B} \mathbb{Z}_2
\]
are homotopic relative $Y_k \times \partial I$, $k=1,2$, and the maps
\[
\eta \circ (i^{\prime}_{k \vert X_k \times \{0\} \times [0,1]} \times \id_I)
\;,\;
o^{\prime}_{k \vert X_k \times \{0\} \times [0,1] \times I} * h_k : X_k \times [0,1] \times I \rightarrow \mathbf{B} \mathbb{Z}_2
\]
are homotopic relative $X_k \times [0,1] \times \partial I$, $k=1,2$. Here $h_k$ is regarded as a map $X_k \times [0,1] \times I \rightarrow \mathbf{B} \mathbb{Z}_2$ which is constant along $[0,1]$.
\end{itemize}
Two triples $((Z; \sigma_{\bullet}), \varphi; \eta)$ and $((Z^{\prime}; \sigma^{\prime}_{\bullet}), \varphi^{\prime}; \eta^{\prime})$ as above are called equivalent if there exists a diffeomorphism $r: Z \xrightarrow[]{\sim} Z^{\prime}$ over $B \mathbb{Z}_2$ which respects the collars, the maps $\varphi, \varphi^{\prime}$ and the homotopies $\eta, \eta^{\prime}$.
\end{itemize}
The various compositions of $1$- and $2$-morphisms are defined completely analogously to the corresponding compositions in $T \mhyphen \Cob^{(\ori)}_{\langle n,n-1,n-2 \rangle}$.
\end{Def}

The bicategory $\hat{T} \mhyphen \Cob^{\Pi}_{\langle n, n-1,n-2 \rangle}$ is symmetric monoidal under disjoint union.

When it will not lead to confusion, we will write $Y$ in place of $(Y; o_1, o_2)$, and similarly for $Z$. We will often omit the explicit mention of collars.

\begin{Rems}
\begin{enumerate}[label=(\roman*)]
\item There is a pointed version $\hat{T}_* \mhyphen \Cob^{\Pi}_{\langle n, n-1,n-2 \rangle}$ of $\hat{T} \mhyphen \Cob^{\Pi}_{\langle n, n-1,n-2 \rangle}$ in which $\hat{T}$ is pointed, objects have a basepoint in each connected component and the map to $\hat{T}$ is pointed, and similarly for $1$- and $2$-morphisms. If $\hat{T}$ is connected, as will always be the case in this paper, then the forgetful map $\hat{T}_* \mhyphen \Cob^{\Pi}_{\langle n, n-1,n-2 \rangle} \rightarrow \hat{T} \mhyphen \Cob^{\Pi}_{\langle n, n-1,n-2 \rangle}$ is a monoidal biequivalence.

\item The above definition truncates to define a category $\hat{T} \mhyphen \Cob_{\langle n, n-1 \rangle}^{\Pi}$. This category is monoidally equivalent to the category of $1$-endomorphisms of the monoidal unit $\varnothing^{n-2}$ of $\hat{T} \mhyphen \Cob_{\langle n, n-1,n-2 \rangle}^{\Pi}$.

\item By construction, the group $\pi_0(\hat{T} \mhyphen \Cob_{\langle n, n-1 \rangle}^{\Pi})$ is isomorphic to the oriented cobordism group $MSO_{n-1}(T, \hat{T})$ of $T$ with coefficients in $\hat{T}$, as introduced by Atiyah \cite[\S 2]{atiyah1961}.
\end{enumerate}
\end{Rems}

We establish some basic properties of $\hat{T} \mhyphen \Cob^{\Pi}_{\langle n, n-1,n-2 \rangle}$.

\begin{Prop}
\label{prop:oriTwistVsUnori}
Let $\hat{T} = T \times \mathbf{B} \mathbb{Z}_2$ with $\Pi$ the projection to the second factor. Then there is a symmetric monoidal pseudofunctor
\[
\Phi: \hat{T} \mhyphen \Cob^{\Pi}_{\langle n, n-1, n-2 \rangle} \rightarrow T \mhyphen \Cob_{\langle n, n-1, n-2 \rangle}
\]
which is essentially surjective, essentially full on $1$-morphisms and locally full on $2$-morphisms.
\end{Prop}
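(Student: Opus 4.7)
The plan is to define $\Phi$ by postcomposing maps to $\hat{T}=T\times\mathbf{B}\mathbb{Z}_2$ with the projection $\pr_T$ and discarding all $\mathbf{B}\mathbb{Z}_2$-valued homotopy data; since $\pr_T$ is continuous and commutes with the collar gluings used to compose cobordisms, this will evidently be a strict symmetric monoidal pseudofunctor, so there is nothing to check beyond bookkeeping. For essential surjectivity, given $(X,g)$ in $T\mhyphen\Cob_{\langle n,n-1,n-2\rangle}$ the plan is to set $f=(g,\ori_X)$, so that $\Pi\circ f=\ori_X$, and take $h$ to be the constant homotopy; then $\Phi(X,f;h)=(X,g)$ on the nose.

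For essential fullness on $1$-morphisms, fix source objects $A_k=(X_k,f_k;h_k)$ and a target $1$-morphism $(Y,G)\colon\Phi(A_1)\to\Phi(A_2)$. The strategy is to lift $G$ to $F=(G,\phi)$ for a suitable $\phi\colon Y\to\mathbf{B}\mathbb{Z}_2$. The boundary requirement $F|_{X_k}=f_k$ forces $\phi|_{X_k}=\pr_{\mathbf{B}\mathbb{Z}_2}f_k$, and the required homotopy $H\colon Y\times I\to\mathbf{B}\mathbb{Z}_2$ from $\phi$ to $\ori_Y$ must restrict to something homotopic to $o_k * h_k$ on the collars. Because each $h_k$ exhibits $\pr_{\mathbf{B}\mathbb{Z}_2}f_k\simeq\ori_{X_k}$ and the collars $i_k$ supply a standard identification $o_k\colon\ori_{X_k}\simeq i_k^{*}\ori_Y$, one can construct $\phi$ by choosing a collar neighborhood of each boundary component, setting $\phi=\ori_Y$ away from these collars, using the explicit homotopy $\bar{h}_k * \bar{o}_k$ parametrized by the collar coordinate on a sub-collar, and gluing the two descriptions with a standard interpolation on the overlap. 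The homotopy $H$ is manufactured from the very same data by running $\phi$ back to $\ori_Y$ along the reverse of this recipe.

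Local fullness on $2$-morphisms proceeds by the same collar-extension strategy one dimension higher. Given a target $2$-morphism $(Z,\varphi)$ between $1$-morphisms that already have chosen lifts $F_1,F_2$, the plan is to produce $\tilde\varphi=(\varphi,\psi)$ with $\psi$ extending the $\mathbf{B}\mathbb{Z}_2$-component of $F_1,F_2$ across the $\partial_0 Z$-collar and the $\mathbf{B}\mathbb{Z}_2$-component of $f_1,f_2$ across the $\partial_1 Z$-collar, together with a homotopy $\eta$ to $\ori_Z$ matching $H_1,H_2$ on $\partial_0 Z$ and $h_1,h_2$ on $\partial_1 Z$. The $\langle 2\rangle$-manifold corner structure of $Z$, combined with the compatibility conditions already built into the source bicategory at corners (among $h_k, o_k, H_k, \sigma_k, o'_k$), will ensure that the $\partial_0$- and $\partial_1$-collar prescriptions agree on their overlap near the corners.

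The hard part is the bookkeeping at corners in the $2$-morphism case: one must verify that a single pair $(\psi,\eta)$ can simultaneously satisfy all the interlocking homotopy-coherence conditions coming from the two families of collars. The principle that makes this manageable is that, up to homotopy, every $\mathbf{B}\mathbb{Z}_2$-valued datum in the source is determined by the orientation class of the underlying manifold restricted to the appropriate stratum, so any requested extension exists whenever boundary data already agree up to the prescribed homotopies---which is precisely what the definition of $\hat{T}\mhyphen\Cob^{\Pi}_{\langle n,n-1,n-2\rangle}$ asserts. This same observation clarifies why $\Phi$ is only asserted to be essentially full rather than fully faithful: distinct non-equivalent choices of $(\phi,H)$ or $(\psi,\eta)$ can descend to the same unoriented morphism.
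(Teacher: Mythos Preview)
Your approach is correct and agrees with the paper's in its overall strategy: define $\Phi$ by postcomposing with the projection $\pr_T$ and discard the $\mathbf{B}\mathbb{Z}_2$-data, then lift objects by pairing with $\ori_X$ and the constant homotopy. The one genuine difference lies in how essential fullness on $1$-morphisms is established. You fix (or canonically extract from the collar) the homotopies $o_k$ first and then build $\phi$ and $H$ to be compatible with them via an explicit collar-interpolation; this forces you to worry about the bookkeeping you flag in the last paragraph. The paper reverses the order of construction: it first chooses \emph{any} classifying map $\overline{F}$ for $\ori_Y$ extending $\Pi\circ(f_1\sqcup f_2)$ and \emph{any} homotopy $H$ from $\overline{F}$ to $\ori_Y$, appealing only to the fact that orientation covers are unique up to equivalence, and then \emph{defines} $o_k := (H\circ(i_k\times\id_I)) * h_k^{-1}$ so that the compatibility condition holds by fiat. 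The same trick works verbatim one dimension up for $2$-morphisms. This buys the paper a cleaner argument with no collar gymnastics or corner matching, while your version makes the construction more explicit at the cost of the extra verification you correctly identified as the ``hard part''.
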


\begin{proof}
The double cover classified by $\Pi$ is homotopy equivalent to the inclusion $T \hookrightarrow T \times \mathbf{B} \mathbb{Z}_2$ at a chosen basepoint of $\mathbf{B} \mathbb{Z}_2$. Let $\pi_T: T \times \mathbf{B} \mathbb{Z}_2 \rightarrow T$ be the projection to the first factor. Then $\Phi$ can be defined to be post-composition with $\pi_T$. More precisely, set $\Phi(X,f; h) = (X, \pi_T \circ f)$ on objects. The functor
\[
\Phi_{X_1,X_2}: 
\Hom_{\hat{T} \mhyphen \Cob^{\Pi}}((X_1,f_1; h_1),(X_2,f_2; h_2)) \rightarrow \Hom_{T \mhyphen \Cob}((X_1,\pi_T \circ f_1),(X_2, \pi_T \circ f_2))
\]
is defined on $1$-morphisms by $\Phi ((Y; o_{\bullet}),F;H) = (Y, \pi_T \circ F)$, and similarly for $2$-morphisms. The required $2$-isomorphisms $\id_{\Phi(X,f,h)} \Rightarrow \Phi_{X,X}(\id_{(X,f,h)})$ and
\[
\Phi_{X_1,X_3}(Y_2 \circ Y_1) \Rightarrow \Phi_{X_2,X_3}(Y_2) \circ \Phi_{X_1,X_2}(Y_1)
\]
can be taken to be the respective identities. There is a canonical lift of $\Phi$ to a symmetric monoidal pseudofunctor.

Given $(X, \tilde{f}) \in T \mhyphen \Cob_{\langle n, n-1, n-2 \rangle}$, we have
\[
\Phi(X, \tilde{f} \times \ori_X ; \ori_X) = (X, \tilde{f}),
\]
where we have written $\ori_X$ both for the map $X \rightarrow \mathbf{B} \mathbb{Z}_2$ and for the associated identity homotopy $X \times I \rightarrow \mathbf{B} \mathbb{Z}_2$. This shows that $\Phi$ is essentially surjective.

Consider now the functor $\Phi_{X_1,X_2}$. Let $(Y, \widetilde{F})$ be an object of the codomain of $\Phi_{X_1,X_2}$. Let $\overline{F}$ be a classifying map of the orientation cover of $Y$ which restricts to
\begin{equation}
\label{eq:boundaryClassifyingMap}
(\Pi \circ f_1) \sqcup (\Pi \circ f_2) : X_1 \sqcup X_2 \rightarrow \mathbf{B} \mathbb{Z}_2
\end{equation}
and let $H$ be a homotopy from $\overline{F}$ to $\ori_Y$. The existence of $\overline{F}$ is ensured by the fact that \eqref{eq:boundaryClassifyingMap} classifies $\ori_{\partial Y}$ while that of $H$ follows from the fact that orientation covers are unique up to equivalence. Put $o_k = (H \circ (i_k \times \id_I)) * h_k^{-1}$, $k=1,2$. Then we have
\[
\Phi_{X_1,X_2} ((Y;o_{\bullet}), \widetilde{F} \times \overline{F}; H)
=
(Y, \widetilde{F}),
\]
proving that $\Phi_{X_1,X_2}$ is essentially full on $1$-morphisms. This construction admits an obvious variation in which the $(n-1)$-cobordism is replaced with an $n$-cobordism with corners. This shows that $\Phi_{X_1,X_2}$ is locally full on $2$-morphisms.
\end{proof}

The next result provides a generalization of the forgetful map \eqref{eq:forgetFunctor}.

\begin{Prop}
\label{prop:forgetFunctor}
There is a symmetric monoidal pseudofunctor
\[
\mathcal{F}: T \mhyphen \Cob^{\ori}_{\langle n, n-1, n-2 \rangle} \rightarrow \hat{T} \mhyphen \Cob^{\Pi}_{\langle n, n-1, n-2 \rangle}.
\]
\end{Prop}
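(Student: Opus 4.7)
The plan is to build $\mathcal{F}$ by post-composing with $\rho: T \rightarrow \hat{T}$ and promoting the orientation data on manifolds in $T \mhyphen \Cob^{\ori}_{\langle n, n-1, n-2\rangle}$ to the homotopy data required by $\hat{T} \mhyphen \Cob^{\Pi}_{\langle n, n-1, n-2\rangle}$. The key observation is that two classes of canonical null-homotopies of maps to $\mathbf{B} \mathbb{Z}_2$ are available: first, because $\Pi$ classifies the double cover $\rho$, the composition $\Pi \circ \rho : T \to \mathbf{B} \mathbb{Z}_2$ classifies the trivial double cover $T \sqcup T \to T$ and thus admits a canonical null-homotopy $h^{\rho}: T \times I \to \mathbf{B} \mathbb{Z}_2$ obtained from the diagonal section together with the choice of basepoint of $E \mathbb{Z}_2$; second, an orientation on a manifold $M$ picks out a section of the orientation double cover and hence a canonical null-homotopy $h^{\ori}_{M}: M \times I \to \mathbf{B} \mathbb{Z}_2$ of $\ori_M$. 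Restriction and naturality ensure that $h^{\rho}$ and $h^{\ori}_M$ behave functorially with respect to continuous maps and inclusions of boundaries, respectively.

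On objects, I would set $\mathcal{F}(X, \tilde{f}) = (X, \rho \circ \tilde{f}; h)$, where $h = (h^{\ori}_X)^{-1} * \tilde{f}^*(h^{\rho})$ is a homotopy from $\Pi \circ \rho \circ \tilde{f}$ to $\ori_X$. On a $1$-morphism $(Y, \tilde{F}): (X_1,\tilde{f}_1) \to (X_2, \tilde{f}_2)$, define $\mathcal{F}(Y, \tilde{F}) = ((Y; o_1, o_2), \rho \circ \tilde{F}; H)$, where $H = (h^{\ori}_Y)^{-1} * \tilde{F}^*(h^{\rho})$ and each collar homotopy $o_k$ is the constant homotopy (using that the orientation of $Y$ restricts to that of $X_k$, so the chosen classifying maps $\ori_{X_k}$ and $i_k^* \ori_Y$ agree on the nose). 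On $2$-morphisms $(Z, \tilde{\varphi})$, proceed analogously with $\eta = (h^{\ori}_Z)^{-1} * \tilde{\varphi}^*(h^{\rho})$ and $\sigma_\bullet, o'_\bullet$ constant. The commuting square over $\hat{T}$ needed in the definition is the image under $\rho$ of the commuting square over $T$ coming from the oriented cobordism data.

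For the coherence data of a pseudofunctor I would take the identity cobordism of $X$ to the identity cobordism of $(X, \rho \circ \tilde{f}; h)$, and use the canonical associator and unitors for concatenation of homotopies (which hold up to the equivalence relation relative to $X \times \partial I$ used throughout in the definition of $\hat{T} \mhyphen \Cob^{\Pi}_{\langle n, n-1, n-2\rangle}$) to produce the coherence $2$-isomorphisms $\mathcal{F}(Y_2 \circ Y_1) \cong \mathcal{F}(Y_2) \circ \mathcal{F}(Y_1)$; on the level of underlying cobordisms these are equalities, and on the level of homotopy data the two composites differ only by reparameterization of concatenations of $h^{\ori}$ and $\tilde{F}^*(h^{\rho})$. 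The symmetric monoidal structure is then essentially strict: disjoint union of oriented manifolds with maps to $T$ is sent to disjoint union of the corresponding orientation twisted data, and the null-homotopies $h^{\ori}_{M_1 \sqcup M_2} = h^{\ori}_{M_1} \sqcup h^{\ori}_{M_2}$ and $(\tilde{f}_1 \sqcup \tilde{f}_2)^*(h^{\rho}) = \tilde{f}_1^*(h^{\rho}) \sqcup \tilde{f}_2^*(h^{\rho})$ distribute over disjoint union.

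The main technical point to verify is the boundary compatibility condition for $1$- and $2$-morphisms, namely that $H \circ (i_{k\vert X_k \times \{0\}} \times \id_I)$ is homotopic relative $X_k \times \partial I$ to $o_k * h_k$. With $o_k$ constant this reduces to the equality $(h^{\ori}_Y)^{-1} * \tilde{F}^*(h^{\rho})$ restricted along $i_k$ coincides with $(h^{\ori}_{X_k})^{-1} * \tilde{f}_k^*(h^{\rho})$, which follows from the naturality of $h^{\rho}$ under $i_k^* \tilde{F} = \tilde{f}_k$ and from the fact that the oriented collar identifies $i_k^* h^{\ori}_Y$ with $h^{\ori}_{X_k}$. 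The analogous verification for the $2$-morphism data on a cobordism with corners $Z$ is the main bookkeeping step, but it is forced in the same way by the compatibility of orientations with the collar structure of $Z$.
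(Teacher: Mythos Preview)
Your proposal is correct and follows essentially the same approach as the paper: post-compose with $\rho$, use a fixed null-homotopy of $\Pi \circ \rho$ (your $h^{\rho}$, the paper's $\nu$), and interpret an orientation of $M$ as a homotopy from a basepoint to $\ori_M$ (your $(h^{\ori}_M)^{-1}$, the paper's $\omega_M$), concatenating these to produce the required homotopy data. One small point: you take the collar homotopies $o_k$ to be constant, asserting that $\ori_{X_k}$ and $i_k^*\ori_Y$ agree on the nose, but since classifying maps are merely fixed choices this need not hold literally; the paper instead takes $o_k$ to be the homotopy obtained from the compatible orientations of $Y$ and $X_1 \sqcup X_2$, which is the harmless fix.
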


\begin{proof}
Let $\nu$ be a null-homotopy of the composition $T \xrightarrow[]{\rho} \hat{T} \xrightarrow[]{\Pi} \mathbf{B} \mathbb{Z}_2$, say to $z \in \mathbf{B} \mathbb{Z}_2$. We will interpret an orientation of a manifold $M$ as a homotopy $\omega_M$ from $z$ to $\ori_M$. With this notation, the functor $\mathcal{F}$ can be defined as follows. On objects set\footnote{For ease of notation, we have written $\omega_X * \nu$ instead of the more accurate $\omega_X * (\nu \circ (f \times \id_I))$.}
\[
\mathcal{F}(X,f) = (X, \rho \circ f; \omega_X * \nu)
\]
and on $1$-morphisms set
\[
\mathcal{F} \left( (X_1, f_1) \xrightarrow[]{(Y,F)} (X_2, f_2) \right) = (X_1, \rho \circ f_1; \omega_M * \nu) \xrightarrow[]{(Y,\rho \circ F; \omega_Y * \nu)} (X_2, \rho \circ f_2; \omega_M * \nu).
\]
The lift of $i_k :X_k \hookrightarrow Y$ to a map over $\mathbf{B} \mathbb{Z}_2$, which has been omitted from the notation, is obtained from the compatible orientations of $Y$ and $X_1 \sqcup X_2$. The definition on $2$-morphisms is analogous to that on $1$-morphisms. The additional compatibility $2$-isomorphisms and the lift of $\mathcal{F}$ to a symmetric monoidal pseudofunctor are canonical.
\end{proof}

\subsection{Extended orientation twisted homotopy field theories}
\label{sec:unoriHQFT}

For background on (extended) oriented homotopy field theories, the reader is referred to \cite{turaev2010}, \cite{muller2018}.

Let $\mathcal{C}$ be a symmetric monoidal bicategory.

\begin{Def}
A once-extended $n$-dimensional orientation twisted homotopy field theory with target $\hat{T}$ valued in $\mathcal{C}$ is a symmetric monoidal pseudofunctor
\[
\mathcal{Z} : \hat{T} \mhyphen \Cob^{\Pi}_{\langle n, n-1,n-2 \rangle} \rightarrow \mathcal{C}
\]
which is homotopy invariant in the following sense: if
\[
((Z; \sigma_{\bullet}),\varphi; \eta) \; , \; ((Z; \sigma_{\bullet}),\varphi^{\prime};\eta^{\prime})
:
(Y_1,F_1;H_1) \Rightarrow (Y_2,F_2;H_2)
\]
are $2$-morphisms in $\hat{T} \mhyphen \Cob^{\Pi}_{\langle n, n-1, n-2 \rangle}$ for which there exists a homotopy $\kappa: Z \times I \rightarrow \hat{T}$ relative $\partial Z$ from $\varphi$ to $\varphi^{\prime}$ which satisfies $\eta^{\prime} * (\Pi \circ \kappa) \simeq \eta$, then
\[
\mathcal{Z}(Z,\varphi;\eta) = \mathcal{Z}(Z,\varphi^{\prime};\eta^{\prime})
\]
as $2$-morphisms $\mathcal{Z}(Y_1,F_1;H_1) \Rightarrow \mathcal{Z}(Y_2,F_2;H_2)$.
\end{Def}

When it will not lead to confusion, we will omit the adjectives `once-extended' and `homotopy'. The $2$-groupoid $\hat{T} \mhyphen \TFT^{\Pi}_{\langle n, n-1, n-2\rangle} (\mathcal{C})$ is defined to be the full subbicategory of $\Hom_{\mathsf{Bicat}}(\hat{T} \mhyphen \mathsf{Cob}^{\Pi}_{\langle n, n-1, n-2\rangle}, \mathcal{C})$ spanned by orientation twisted homotopy field theories. When $\mathcal{C} = 2 \Vect_{\mathbb{C}}$ we simply write $\hat{T} \mhyphen \TFT^{\Pi}_{\langle n, n-1,n-2\rangle}$. Truncating the previous definition defines the groupoid $\hat{T} \mhyphen \TFT^{\Pi}_{\langle n, n-1\rangle} (\mathcal{A})$ of non-extended orientation twisted field theories valued in a monoidal category $\mathcal{A}$. Moreover, restriction to $\End_{\hat{T} \mhyphen \Cob^{\Pi}_{\langle n, n-1,n-2 \rangle}}(\varnothing^{n-2})$ defines a functor
\[
\pi_{\leq 1}(
\hat{T} \mhyphen \TFT^{\Pi}_{\langle n, n-1, n-2\rangle} (\mathcal{C})) \rightarrow \hat{T} \mhyphen \TFT^{\Pi}_{\langle n, n-1\rangle} (\End_{\mathcal{C}}(\mathbf{1}_{\mathcal{C}})),
\]
the domain being the homotopy category of $\hat{T} \mhyphen \TFT^{\Pi}_{\langle n, n-1, n-2\rangle} (\mathcal{C})$.

\begin{Def}
An orientation twisted lift of an oriented homotopy field theory
\[
\mathcal{Z} : T \mhyphen \Cob^{\ori}_{\langle n, n-1,n-2 \rangle} \rightarrow \mathcal{C}
\]
is the data of a map $\Pi: \hat{T} \rightarrow \mathbf{B} \mathbb{Z}_2$ for which $T$ is (homotopic to) the total space of the associated double cover and an orientation twisted homotopy field theory $\mathcal{Z}^{\textnormal{lift}} : \hat{T} \mhyphen \Cob_{\langle n, n-1,n-2 \rangle}^{\Pi}\rightarrow \mathcal{C}$ which makes the following diagram homotopy commute:
\[
\begin{tikzpicture}[baseline= (a).base]
\node[scale=1] (a) at (0,0){
\begin{tikzcd}[column sep=5em, row sep=2.5em]
\hat{T} \mhyphen \Cob^{\Pi}_{\langle n, n-1, n-2 \rangle}  \arrow{dr}{\mathcal{Z}^{\textnormal{lift}}} & \\ T \mhyphen \Cob^{\ori}_{\langle n, n-1,n-2 \rangle} \arrow{u}[left]{\mathcal{F}} \arrow{r}[below]{\mathcal{Z}} & \mathcal{C}.
\end{tikzcd}
};
\end{tikzpicture}
\]
\end{Def}

The following result is motivated by \cite[Proposition 2.3]{atiyah1961}.

\begin{Prop}
\label{prop:reductionToUnoriented}
Let $\hat{T} = T \times \mathbf{B} \mathbb{Z}_2$ with $\Pi$ the projection to the second factor. Then restriction along the pseudofunctor $\Phi$ from Proposition \ref{prop:oriTwistVsUnori} defines a biequivalence
\[
\Phi^*: 
T \mhyphen \TFT_{\langle n, n-1, n-2 \rangle}(\mathcal{C}) \rightarrow \hat{T} \mhyphen \TFT^{\Pi}_{\langle n, n-1, n-2 \rangle} (\mathcal{C}).
\]
\end{Prop}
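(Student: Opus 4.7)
The plan is to verify that $\Phi^*$ is essentially surjective on objects, essentially full on $1$-morphisms, and fully faithful on $2$-morphisms; since the domain and codomain are $2$-groupoids of pseudofunctors, these three conditions suffice to conclude that $\Phi^*$ is a biequivalence. The two technical inputs are the three ``essentially/locally full'' properties of $\Phi$ established in Proposition \ref{prop:oriTwistVsUnori} and the homotopy invariance axiom built into the definition of an orientation twisted homotopy field theory. The former provides the raw surjectivity at each categorical level, while the latter rectifies the unavoidable choices of orientation data into coherent assignments. The splitting $\hat{T} = T \times \mathbf{B}\mathbb{Z}_2$ is exploited by picking the preferred section of $\pi_T$ coming from a classifying map of $\ori_{(-)}$ into the second factor.

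For essential surjectivity of $\Phi^*$, I would begin with an orientation twisted theory $\mathcal{Z}' : \hat{T} \mhyphen \Cob^{\Pi}_{\langle n,n-1,n-2 \rangle} \to \mathcal{C}$ and construct its unoriented preimage $\mathcal{Z}$ by setting $\mathcal{Z}(X,\tilde{f}) = \mathcal{Z}'(X, \tilde{f} \times \ori_X; \ori_X)$ on objects, and defining the values on $1$- and $2$-morphisms using the canonical lift produced in the proof of Proposition \ref{prop:oriTwistVsUnori} from a classifying map of the ambient orientation double cover extending the boundary data, together with a homotopy to $\ori_{(-)}$. The key point is that the result is independent, up to canonical $2$-isomorphism in $\mathcal{C}$, of these choices: any two such lifts of a given cobordism differ by an equivalence class of homotopies in $\mathbf{B}\mathbb{Z}_2$, and the homotopy invariance axiom supplies the comparison $2$-isomorphisms, which one checks are coherent with composition and therefore assemble into a symmetric monoidal pseudofunctor $\mathcal{Z}$. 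The pseudo-natural equivalence $\Phi^* \mathcal{Z} \xrightarrow{\sim} \mathcal{Z}'$ is then built from these same comparison maps. Essential fullness on $1$-morphisms and full faithfulness on $2$-morphisms proceed in parallel: a pseudo-natural transformation or modification between $\Phi^* \mathcal{Z}_1$ and $\Phi^* \mathcal{Z}_2$ is lifted componentwise via the canonical choice of orientation lift, and the required axioms reduce to comparisons of lifts that agree by homotopy invariance.

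I expect the main obstacle to lie in the coherent book-keeping needed to promote these pointwise constructions into genuine pseudofunctors, pseudo-natural transformations, and modifications that respect horizontal and vertical composition as well as the symmetric monoidal structure. The crux is the verification that the space of admissible orientation data extending prescribed boundary data is ``homotopically contractible'' in the sense that any two choices are related by an equivalence class of homotopies which in turn are identified by the homotopy invariance axiom. This is ultimately a consequence of $\Pi: T \times \mathbf{B}\mathbb{Z}_2 \to \mathbf{B}\mathbb{Z}_2$ being a trivial fibration, so the only ambiguity is $\pi_1(\mathbf{B}\mathbb{Z}_2)$-worth of homotopies, each of which yields an equivalence of $2$-morphisms in $\hat{T} \mhyphen \Cob^{\Pi}_{\langle n,n-1,n-2\rangle}$ to which homotopy invariance then applies. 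Once this contractibility is in hand, the three levels of surjectivity from Proposition \ref{prop:oriTwistVsUnori} deliver the biequivalence with essentially no further work.
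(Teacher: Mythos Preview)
Your approach is correct in outline, but it takes a more laborious route than the paper's. You propose to build an explicit quasi-inverse to $\Phi^*$ by lifting each unoriented cobordism to a canonical orientation-twisted one and then invoking homotopy invariance to check coherence; as you yourself anticipate, this entails a fair amount of bicategorical book-keeping to assemble the pointwise lifts into genuine pseudofunctors, pseudo-natural transformations, and modifications.

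The paper sidesteps all of this. Its argument is that $\Phi$ already satisfies three of the four conditions for being a biequivalence (essentially surjective, essentially full on $1$-morphisms, locally full on $2$-morphisms, from Proposition~\ref{prop:oriTwistVsUnori}), failing only local faithfulness. It then identifies the $2$-morphism fibres of $\Phi_{X_1,X_2}$ explicitly: they consist of the various extensions of the $\mathbf{B}\mathbb{Z}_2$-component of a map to a classifying map for $\ori_Z$ with fixed boundary data, and all such extensions are homotopic relative $\partial Z$. The homotopy invariance axiom is \emph{precisely} the statement that a theory collapses these fibres. Hence every orientation twisted theory factors uniquely through $\Phi$, which is exactly what is needed for $\Phi^*$ to be a biequivalence. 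No quasi-inverse is ever constructed and no coherence needs to be checked by hand.

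Your approach buys nothing extra here, but it is worth noting that the explicit section you write down, $(X,\tilde f)\mapsto (X,\tilde f\times\ori_X;\ori_X)$, is exactly the one the paper uses in the proof of Proposition~\ref{prop:oriTwistVsUnori}, so the two arguments are not far apart in spirit; the paper simply packages the observation more efficiently.
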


\begin{proof}
Let $\mathcal{Z} : T \mhyphen \Cob_{\langle n, n-1, n-2 \rangle} \rightarrow \mathcal{C}$ be an unoriented homotopy field theory. Homotopy invariance of $\mathcal{Z}$ implies that $\mathcal{Z} \circ \Phi$ is homotopy invariant, whence $\Phi^*$ is well-defined.

Recall that a pseudofunctor is a biequivalence if and only if it is essentially surjective, essentially full on $1$-morphisms and locally fully faithful. The proof of Proposition \ref{prop:oriTwistVsUnori} shows that $\Phi$ fails to be a biequivalence only because the functors $\Phi_{X_1,X_2}$ are not faithful. More precisely, the fibre of $\Phi_{X_1,X_2}$ over a $2$-morphism $(Z, \widetilde{\varphi})$ consists of all extensions $\overline{\varphi}$ of $\widetilde{\varphi}$ to a classifying map of the orientation cover of $Z$ which restrict to $\Pi \circ (F_1 \sqcup F_2 \cup f_1 \cup f_2)$. All such extensions are homotopic relative $\partial Z$. In particular, the homotopy invariance axiom implies any $\mathcal{Z} \in \hat{T} \mhyphen \TFT^{\Pi}_{\langle n, n-1, n-2 \rangle} (\mathcal{C})$ collapses the $2$-morphism fibres of $\Phi_{X_1,X_2}$. This ensures that $\Phi^*$ is a biequivalence.
\end{proof}

We will use Proposition \ref{prop:reductionToUnoriented} to identify orientation twisted homotopy field theories with target $T \times \mathbf{B} \mathbb{Z}_2$ with unoriented homotopy field theories with target $T$. In the non-extended setting, unoriented homotopy field theories with various targets have been studied by many authors; see \cite{tagami2012}, \cite{sweet2013} and, when $T = \pt$, also \cite{karimipour1997}, \cite{alexeevski2006}, \cite{turaev2006}.

The following result is crucial for the orbifolding of orientation twisted theories.

\begin{Prop}
\label{prop:vbMappingSpace}
Let $\mathcal{Z} : \hat{T} \mhyphen \Cob^{\Pi}_{\langle n, n-1, n-2 \rangle} \rightarrow \mathcal{C}$ be an orientation twisted field theory. For each closed $(n-2)$-manifold $X$, there is an induced pseudofunctor
\[
\mathcal{R}^X_{\mathcal{Z}}: \MMap^{\leq 2}_{\mathbf{B} \mathbb{Z}_2}(X, \hat{T}) \rightarrow \mathcal{C}.
\] 
\end{Prop}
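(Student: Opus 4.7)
The strategy is to factor $\mathcal{R}^X_{\mathcal{Z}}$ as the composition
\[
\MMap^{\leq 2}_{\mathbf{B} \mathbb{Z}_2}(X, \hat{T}) \xrightarrow{\;\mathcal{L}^X\;} \hat{T} \mhyphen \Cob^{\Pi}_{\langle n, n-1, n-2 \rangle} \xrightarrow{\;\mathcal{Z}\;} \mathcal{C},
\]
where $\mathcal{L}^X$ is a ``cylindrification'' pseudofunctor built by putting the mapping-space data on the trivial cobordisms $X$, $X \times [0,1]$, and $X \times [0,1]^2$. Concretely, comparing the explicit description of $\MMap^{\leq 2}_{\mathbf{B} \mathbb{Z}_2}(X, \hat{T})$ from Section \ref{sec:relMapGrpd} with the definition of $\hat{T} \mhyphen \Cob^{\Pi}_{\langle n, n-1, n-2 \rangle}$ in Section \ref{sec:oriTwistCobord} reveals that the former packages exactly the data needed to decorate the latter's cylinders with maps to $\hat{T}$ and orientation-twist homotopies into $\mathbf{B} \mathbb{Z}_2$.

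On objects, I would send $(f;m)$ to $(X,f;m)$, using that the homotopy $m$ from $\Pi \circ f$ to $\ori_X$ is precisely the datum required for an object of $\hat{T} \mhyphen \Cob^{\Pi}_{\langle n, n-1, n-2 \rangle}$. A $1$-morphism $(F;M) : (f_1;m_1) \to (f_2;m_2)$ would go to the cylinder $X \times [0,1]$ endowed with the map $F : X \times [0,1] \to \hat{T}$ (after identifying $I$ with $[0,1]$), and with $\mathbf{B}\mathbb{Z}_2$-homotopy constructed from the equivalence class $M$; the boundary identifications with $m_1,m_2$ follow directly from the relation $M : m_2 \ast \Pi(F) \simeq m_1$ built into the definition of $\MMap^{\leq 2}_{\mathbf{B}\mathbb{Z}_2}$. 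On a $2$-morphism represented by $\varphi : X \times I \times I \to \hat{T}$, I would send it to the cylinder with corners $Z = X \times [0,1]^2$, equipped with $\varphi$ and with a homotopy $\eta : Z \times I \to \mathbf{B}\mathbb{Z}_2$ assembled from the representing homotopy of $\varphi$'s defining relation. Collars at each stage are the standard product collars, and the required compatibility diagrams reduce to the boundary/corner conditions already encoded in the mapping $2$-groupoid.

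The principal obstacle is well-definedness on $2$-morphisms: $\varphi$ is only an equivalence class of homotopies relative $X \times \partial(I \times I)$, while the cobordism bicategory identifies $2$-morphisms only up to diffeomorphisms of the underlying $\langle 2 \rangle$-manifold. Two representatives $\varphi, \varphi'$ of the same class yield $2$-morphisms $(Z,\varphi;\eta)$ and $(Z,\varphi';\eta')$ on the same cylinder with corners, connected by a homotopy $\kappa : Z \times I \to \hat{T}$ rel $\partial Z$ with $\eta' \ast (\Pi \circ \kappa) \simeq \eta$. This is exactly the input to the homotopy invariance axiom for $\mathcal{Z}$, which then forces $\mathcal{Z}(Z,\varphi;\eta) = \mathcal{Z}(Z,\varphi';\eta')$. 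Thus the descent of $\mathcal{L}^X$ to equivalence classes is guaranteed after post-composition with $\mathcal{Z}$, which is precisely what the statement asserts.

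Finally, I would verify the pseudofunctor data: identity objects of the mapping $2$-groupoid are sent to cylinders which, as $1$-endomorphisms in $\hat{T} \mhyphen \Cob^{\Pi}_{\langle n, n-1, n-2 \rangle}$, carry canonical coherence $2$-isomorphisms to the strict identities; composition of $1$-morphisms $(F_1;M_1)$ and $(F_2;M_2)$ is sent to the glued cylinder $X \times [0,2]$, which is diffeomorphic rel collars to the composition $X \times [0,1] \circ X \times [0,1]$, providing the compositor $2$-cells. The associators and unitors of $\mathcal{L}^X$ are then pulled back from these canonical identifications, and horizontal composition of $2$-morphisms is handled by the analogous corner-smoothing diffeomorphism on $X \times [0,1]^2$. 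Since $\mathcal{Z}$ is itself a (symmetric monoidal) pseudofunctor, the composite $\mathcal{R}^X_{\mathcal{Z}} := \mathcal{Z} \circ \mathcal{L}^X$ inherits the pseudofunctor structure, completing the argument.
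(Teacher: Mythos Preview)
Your approach is essentially the same as the paper's: define $\mathcal{R}^X_{\mathcal{Z}}$ by putting the mapping-space data on cylinders and invoke homotopy invariance of $\mathcal{Z}$ for well-definedness on $2$-morphisms. The paper does not attempt to define a standalone pseudofunctor $\mathcal{L}^X$ but rather constructs $\mathcal{R}^X_{\mathcal{Z}}$ directly, which is consistent with your own observation that $\mathcal{L}^X$ only descends to equivalence classes after composing with $\mathcal{Z}$.

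The one place where your sketch is thinner than the paper is the phrase ``with $\mathbf{B}\mathbb{Z}_2$-homotopy constructed from the equivalence class $M$.'' The datum $M$ is a homotopy from $m_2 * (\Pi \circ F)$ to $m_1$, whereas a $1$-morphism in $\hat{T}\mhyphen\Cob^{\Pi}$ requires a homotopy $H$ from $\Pi \circ F$ to $\ori_{X\times I}$ whose boundary restrictions recover $m_1$ and $m_2$. These are not the same shape of data: one must reparametrize the square $I^2$ so that the concatenation $m_2 * (\Pi \circ F)$, which lives on one edge, gets spread over two adjacent edges. The paper does this explicitly via a continuous map $c_1 : I^2 \to I^2$ homotopic to the identity (and an analogous $c_2 : I^3 \to I^3$ for $2$-morphisms), with diagrams showing how the face labels transform. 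This reparametrization is the main technical content of the argument; once you supply it, the rest of your outline goes through exactly as you describe.
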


\begin{proof}
We work with the explicit description of $\MMap^{\leq 2}_{\mathbf{B} \mathbb{Z}_2}(X, \hat{T})$ from Section \ref{sec:relMapGrpd}.

At the level of objects, set $\mathcal{R}^X_{\mathcal{Z}}(f; h) = \mathcal{Z}(X, f; h)$.

Let $(F;H): (f_1; h_1) \rightarrow (f_2; h_2)$ be a $1$-morphism in $\MMap^{\leq 2}_{\mathbf{B} \mathbb{Z}_2}(X, \hat{T})$. Slightly abusively, we denote by $H: X \times I^2 \rightarrow \mathbf{B} \mathbb{Z}_2$ a chosen representative of $H$, which is thus a homotopy relative $X \times \partial I$ from $h_2 * (\Pi \circ F)$ to $h_1$. We can depict $H$ as
\[
\newcommand{\Depth}{2}
\newcommand{\Height}{2}
\newcommand{\Width}{2}
\begin{tikzpicture}[scale=0.85]
\coordinate (O) at (0,0,0);
\coordinate (A) at (0,\Width,0);
\coordinate (B) at (0,\Width,\Height);
\coordinate (C) at (0,0,\Height);
\coordinate (D) at (\Depth,0,0);
\coordinate (E) at (\Depth,\Width,0);
\coordinate (F) at (\Depth,\Width,\Height);
\coordinate (G) at (\Depth,0,\Height);

\draw[thick,black] (A) -- (E);
\draw[thick,black] (A) -- (B);
\draw[thick,black] (E) -- (F);
\draw[thick,black] (B) -- (F);
\draw[thick,black,decoration={markings, mark=at position 0.5 with {\arrow{>}}},        postaction={decorate}] (C) -- (B);
\draw[thick,black] (E) -- (D);
\draw[thick,black] (C) -- (G);
\draw[thick,black] (F) -- (G);
\draw[thick,black,decoration={markings, mark=at position 0.5 with {\arrow{>}}},        postaction={decorate}] (G) -- (D);
\draw[thick,black,dashed] (C) -- (O);
\draw[thick,black,dashed] (O) -- (D);
\draw[thick,black,dashed] (O) -- (A);
\draw[thick,black] (0,0.5*\Width,\Height) -- (\Depth,0.5*\Width,\Height);

\draw[thin,decoration={markings, mark=at position 1.0 with {\arrow{>}}},        postaction={decorate}] (-1.5,-1.5) to (-0.5,0);
\node at (-1.5,-1.6) {$\scriptstyle \Pi \circ F$};

\draw[thin,decoration={markings, mark=at position 1.0 with {\arrow{>}}},        postaction={decorate}] (-1.5,-0.5) to (-0.5,1);
\node at (-1.5,-0.6) {$\scriptstyle h_2$};

\draw[thin] (3,1) .. controls +(-0.5,0.5) and +(0.3,0.1) .. (2,1.5);
\node at (3.3, 1) {$\scriptstyle h_1$};

\draw[dashed,thin,decoration={markings, mark=at position 0.99 with {\arrow{>}}},        postaction={decorate}] (2,1.5) .. controls +(-0.2,-0.1) and +(0.4,0.6) .. (1.5,1.0);

\draw[thin] (2,-2) .. controls +(-0.2,0.2) and +(0.5,-1.5) .. (0.85,-0.75);
\draw[thin,dashed,decoration={markings, mark=at position 0.99 with {\arrow{>}}},        postaction={decorate}] (0.85,-0.75) to (0.75,-0.3);
\node at (2,-2.2) {$\scriptstyle \Pi \circ f_1$};

\draw[thin,decoration={markings, mark=at position 0.99 with {\arrow{>}}},        postaction={decorate}] (-1.0,2.75) to (1,1.75);
\node at (-1.0,2.95) {$\scriptstyle \ori_X$};

\node at (0.2,-1) {$\scriptstyle X$};
\node at (-0.95,0) {$\scriptstyle I_1$};
\node at (1.85,-0.55) {$\scriptstyle I_2$};

\draw[thin,decoration={markings, mark=at position 1.0 with {\arrow{>}}},        postaction={decorate}] (3,0.5) to (7.0,0.5);
\node at (8,0.5) {$\mathbf{B} \mathbb{Z}_2$.};
\node at (5,0.85) {$H$};
\end{tikzpicture}
\]
Restrictions of $H$ to various faces of $X \times I^2= X \times I_1 \times I_2$ have been indicated. Let $c_1: I^2 \rightarrow I^2$ be a continuous map which is homotopic to the identity and which takes the indicated segments to the indicated segments and corner:
\[
\newcommand{\Depth}{2}
\newcommand{\Height}{2}
\newcommand{\Width}{2}
\begin{tikzpicture}[scale=1.0,baseline= (a).base]
\coordinate (O) at (0,0,0);
\coordinate (A) at (0,\Width,0);
\coordinate (B) at (0,\Width,\Height);
\coordinate (C) at (0,0,\Height);
\coordinate (M) at (0,0.5*\Width,\Height);

\draw[thick,black] (A) -- (B);
\draw[thick,black] (A) -- (O);
\draw[thick,black] (B) -- (C);
\draw[thick,black] (O) -- (C);

\node[draw,fill=black,scale=0.25,circle] at (M) {};

\node at (-0.2,0.2*\Width,\Height) {$\scriptstyle a$};

\node at (-0.2,0.8*\Width,\Height) {$\scriptstyle b$};

\node at (0,\Width+0.25,0.5*\Height) {$\scriptstyle c$};

\node at (0.2,0.5*\Width,0) {$\scriptstyle d$};

\node at (0,-0.2,0.4*\Height) {$\scriptstyle e$};

\end{tikzpicture}
\xrightarrow[]{c_1}
\begin{tikzpicture}[scale=1.0,scale=1.0,baseline= (a).base]
\coordinate (O) at (0,0,0);
\coordinate (A) at (0,\Width,0);
\coordinate (B) at (0,\Width,\Height);
\coordinate (C) at (0,0,\Height);
\coordinate (M) at (0,0,\Height);

\draw[thick,black] (A) -- (B);
\draw[thick,black] (A) -- (O);
\draw[thick,black] (B) -- (C);
\draw[thick,black] (O) -- (C);

\node at (-0.2,0.5*\Width,\Height) {$\scriptstyle a$};

\node at (0,\Width+0.25,0.5*\Height) {$\scriptstyle b$};

\node at (0.2,0.5*\Width,0) {$\scriptstyle c$};

\node at (0,-0.2,0.4*\Height) {$\scriptstyle d$};

\node at (0,-0.2,\Height) {$\scriptstyle e$};

\node[draw,fill=black,scale=0.25,circle] at (M) {};

\end{tikzpicture}
\]
Then $(\id_X \times c_1)^*H$ is a map $G: X \times I^2 \rightarrow \mathbf{B} \mathbb{Z}_2$ with the following indicated restrictions:
\[
\newcommand{\Depth}{2}
\newcommand{\Height}{2}
\newcommand{\Width}{2}
\begin{tikzpicture}[scale=0.85]
\coordinate (O) at (0,0,0);
\coordinate (A) at (0,\Width,0);
\coordinate (B) at (0,\Width,\Height);
\coordinate (C) at (0,0,\Height);
\coordinate (D) at (\Depth,0,0);
\coordinate (E) at (\Depth,\Width,0);
\coordinate (F) at (\Depth,\Width,\Height);
\coordinate (G) at (\Depth,0,\Height);

\draw[thick,black] (A) -- (E);
\draw[thick,black] (A) -- (B);
\draw[thick,black] (E) -- (F);
\draw[thick,black] (B) -- (F);
\draw[thick,black,decoration={markings, mark=at position 0.5 with {\arrow{>}}},        postaction={decorate}] (C) -- (B);
\draw[thick,black] (E) -- (D);
\draw[thick,black] (C) -- (G);
\draw[thick,black] (F) -- (G);
\draw[thick,black,decoration={markings, mark=at position 0.5 with {\arrow{>}}},        postaction={decorate}] (G) -- (D);
\draw[thick,black,dashed] (C) -- (O);
\draw[thick,black,dashed] (O) -- (D);
\draw[thick,black,dashed] (O) -- (A);

\draw[thin,decoration={markings, mark=at position 1.0 with {\arrow{>}}},        postaction={decorate}] (-1.5,-1.5) to (-0.5,0);
\node at (-1.5,-1.6) {$\scriptstyle \Pi \circ F$};

\draw[thin,decoration={markings, mark=at position 1.0 with {\arrow{>}}},        postaction={decorate}] (-0.25,-1.75) to (-0.25,-0.8);
\node at (-0.25,-2.0) {$\scriptstyle \Pi \circ f_1$};

\draw[thin] (3,1) .. controls +(-0.5,0.5) and +(0.3,0.1) .. (2,1.5);
\node at (3.6, 1) {$\scriptstyle \ori_{X \times I}$};

\draw[dashed,thin,decoration={markings, mark=at position 0.99 with {\arrow{>}}},        postaction={decorate}] (2,1.5) .. controls +(-0.2,-0.1) and +(0.4,0.6) .. (1.5,1.0);

\draw[thin] (2,-2) .. controls +(-0.2,0.2) and +(0.5,-1.5) .. (0.85,-0.75);
\draw[thin,dashed,decoration={markings, mark=at position 0.99 with {\arrow{>}}},        postaction={decorate}] (0.85,-0.75) to (0.75,-0.3);
\node at (2,-2.2) {$\scriptstyle h_1$};

\draw[thin,decoration={markings, mark=at position 0.99 with {\arrow{>}}},        postaction={decorate}] (-1.0,2.75) to (1,1.75);
\node at (-1.0,2.95) {$\scriptstyle h_2$};

\node at (0.2,-1) {$\scriptstyle X$};
\node at (-0.95,0) {$\scriptstyle I_1$};
\node at (1.85,-0.55) {$\scriptstyle I_2$};

\draw[thin,decoration={markings, mark=at position 1.0 with {\arrow{>}}},        postaction={decorate}] (3,0.5) to (7.0,0.5);
\node at (8,0.5) {$\mathbf{B} \mathbb{Z}_2$.};
\node at (5,0.85) {$G$};
\end{tikzpicture}
\]
Here we regard $\ori_{X \times I}$ as the map $X \times I \xrightarrow[]{\pr_X} X \xrightarrow[]{\ori_X} \mathbf{B} \mathbb{Z}_2$. The triple $(X \times I, F; G)$ therefore defines a $1$-morphism $(X, f_1; h_1) \rightarrow (X, f_2; h_2)$ in $\hat{T}\mhyphen \Cob^{\Pi}_{\langle n, n-1, n-2 \rangle}$, allowing us to set
\[
\mathcal{R}^X_{\mathcal{Z}}(F;H) = \mathcal{Z}(X \times I, F; G).
\]
Observe that if $H$ and $H^{\prime}$ are homotopic relative $X \times \partial I$, then so too are the associated maps $G$ and $G^{\prime}$, where we use the same map $c_1$ to define $G$ and $G^{\prime}$. From this we conclude that $\mathcal{R}^X_{\mathcal{Z}}$ is well-defined on $1$-morphisms.

Finally, suppose that we are given a $2$-morphism
\[
\begin{tikzcd}[column sep=10em]
(f_1; h_1)
  \arrow[bend left=15]{r}[name=U,below]{}{(F_1; H_1)} 
  \arrow[bend right=15]{r}[name=D]{}[swap]{(F_2; H_2)}
& 
(f_2; h_2)
\arrow[shorten <=1.5pt,shorten >=1.5pt,Rightarrow,to path={(U) -- node[label=left:{\footnotesize $\varphi$}] {} (D)}]{}
\end{tikzcd}
\]
in $\MMap^{\leq 2}_{\mathbf{B} \mathbb{Z}_2}(X, \hat{T})$, so that $\varphi: X \times I^2 \rightarrow \hat{T}$ is an equivalence class of homotopies relative $X \times \partial I$ from $F_1$ to $F_2$ such that $H_2 * (h_2 * (\Pi \circ \varphi)) \simeq H_1$. Fix a representative of $\varphi$ and choose a homotopy $Q: X \times I^3 \rightarrow \mathbf{B} \mathbb{Z}_2$ relative $X \times \partial (I \times I)$ realizing this equivalence. Suppressing the $X$ direction, this can be pictured as
\[
\newcommand{\Depth}{2}
\newcommand{\Height}{2}
\newcommand{\Width}{2}
\begin{tikzpicture}[scale=0.85]
\coordinate (O) at (0,0,0);
\coordinate (A) at (0,\Width,0);
\coordinate (B) at (0,\Width,\Height);
\coordinate (C) at (0,0,\Height);
\coordinate (D) at (\Depth,0,0);
\coordinate (E) at (\Depth,\Width,0);
\coordinate (F) at (\Depth,\Width,\Height);
\coordinate (G) at (\Depth,0,\Height);
\coordinate (I) at (0,0.5*\Width,\Height);
\coordinate (J) at (0,0.5*\Width,0.5*\Height);
\coordinate (K) at (0,0,0.5*\Height);
\coordinate (L) at (0,\Width,0.5*\Height);
\coordinate (M) at (\Depth,0.5*\Width,\Height);

\draw[thick,black] (A) -- (E);
\draw[thick,black] (A) -- (B);
\draw[thick,black] (E) -- (F);
\draw[thick,black] (B) -- (F);
\draw[thick,black,decoration={markings, mark=at position 0.5 with {\arrow{>}}},        postaction={decorate}] (C) -- (B);
\draw[thick,black] (E) -- (D);
\draw[thick,black,decoration={markings, mark=at position 0.5 with {\arrow{>}}},        postaction={decorate}] (C) -- (G);
\draw[thick,black] (F) -- (G);
\draw[thick,black,decoration={markings, mark=at position 0.5 with {\arrow{>}}},        postaction={decorate}] (G) -- (D);
\draw[thick,black,dashed] (C) -- (O);
\draw[thick,black,dashed] (O) -- (D);
\draw[thick,black,dashed] (O) -- (A);
\draw[thick,black,dashed,red!80!black] (I) -- (J);
\draw[thick,black,dashed,red!80!black] (K) -- (L);

\draw[thin,decoration={markings, mark=at position 0.99 with {\arrow{>}}},        postaction={decorate}] (-1.0,2.75) to (1,1.75);
\node at (-1.0,2.95) {$\scriptstyle \ori_X$};

\draw[thin,decoration={markings, mark=at position 1.0 with {\arrow{>}}},        postaction={decorate}] (2.95,1.5) to (2.08,1.5);
\node at (3.2,1.5) {$\scriptstyle h_1$};

\draw[thin,decoration={markings, mark=at position 1.0 with {\arrow{>}}},        postaction={decorate}] (2.95,1.0) to (1.75,1.0);
\node at (3.2,1.0) {$\scriptstyle H_1$};

\draw[thin,decoration={markings, mark=at position 1.0 with {\arrow{>}}},        postaction={decorate}] (3.15,-1.25) to (1.9,-0.25);
\node at (3.15,-1.5) {$\scriptstyle \Pi \circ f_1$};

\draw[thin,decoration={markings, mark=at position 1.0 with {\arrow{>}}},        postaction={decorate}] (-.45,-1.20) to (1.15,0.75);
\node at (-.50,-1.35) {$\scriptstyle h_2$};

\draw[thin,decoration={markings, mark=at position 1.0 with {\arrow{>}}},        postaction={decorate}] (0.15,-1.75) to (1.15,-0.25);
\node at (0.15,-2.0) {$\scriptstyle \Pi \circ F_1$};

\draw[thin] (-1.5,-0.2) .. controls +(0,0.5) and +(-0.3,0.1) .. (-0.8,0.75);
\node at (-1.5,-0.325) {$\scriptstyle h_2$};

\draw[dashed,thin,decoration={markings, mark=at position 0.99 with {\arrow{>}}},        postaction={decorate}] (-0.8,0.75) .. controls +(0.2,0.0) and +(-0.2,0.0) .. (-0.55,0.75);

\draw[thin] (-1.5,-1.2) .. controls +(0,0.5) and +(-0.3,0.1) .. (-0.8,-0.25);
\node at (-1.5,-1.4) {$\scriptstyle \Pi \circ \varphi$};

\draw[dashed,thin,decoration={markings, mark=at position 0.99 with {\arrow{>}}},        postaction={decorate}] (-0.8,-0.25) .. controls +(0.2,0.0) and +(-0.2,0.0) .. (-0.55,-0.25);

\draw[thin] (-1.5,2.0) .. controls +(0.2,0.2) and +(-0.4,0.4) .. (-0.3,1.75);
\node at (-1.75,2.0) {$\scriptstyle H_2$};

\draw[dashed,thin,decoration={markings, mark=at position 0.99 with {\arrow{>}}},        postaction={decorate}] (-0.3,1.75) .. controls +(0.1,-0.1) and +(-0.2,0.5) .. (-0.15,0.75);

\node at (0.2,-1.1) {$\scriptstyle I_3$};
\node at (-0.95,0) {$\scriptstyle I_1$};
\node at (1.85,-0.55) {$\scriptstyle I_2$};

\draw[thin,decoration={markings, mark=at position 1.0 with {\arrow{>}}},        postaction={decorate}] (3,0.5) to (7.0,0.5);
\node at (8,0.5) {$\mathbf{B} \mathbb{Z}_2$.};
\node at (5,0.85) {$Q$};
\node[draw,fill=black,scale=0.25,circle] at (M) {};
\end{tikzpicture}
\]
There exists a map $\tilde{c}_1:I^3 \rightarrow I^3$ which is homotopic to the identity and which restricts to $c_1$ on the regions labeled by $H_1$ and $H_2$ and for which $(\id_X \times \tilde{c}_1)^*Q$ has the form
\[
\newcommand{\Depth}{2}
\newcommand{\Height}{2}
\newcommand{\Width}{2}
\begin{tikzpicture}[scale=0.85]
\coordinate (O) at (0,0,0);
\coordinate (A) at (0,\Width,0);
\coordinate (B) at (0,\Width,\Height);
\coordinate (C) at (0,0,\Height);
\coordinate (D) at (\Depth,0,0);
\coordinate (E) at (\Depth,\Width,0);
\coordinate (F) at (\Depth,\Width,\Height);
\coordinate (G) at (\Depth,0,\Height);
\coordinate (I) at (0,0.5*\Width,\Height);
\coordinate (J) at (0,0.5*\Width,0.5*\Height);
\coordinate (K) at (0,0,0.5*\Height);
\coordinate (L) at (0,\Width,0.5*\Height);
\coordinate (M) at (\Depth,0.5*\Width,\Height);

\draw[thick,black] (A) -- (E);
\draw[thick,black] (A) -- (B);
\draw[thick,black] (E) -- (F);
\draw[thick,black] (B) -- (F);
\draw[thick,black,decoration={markings, mark=at position 0.5 with {\arrow{>}}},        postaction={decorate}] (C) -- (B);
\draw[thick,black] (E) -- (D);
\draw[thick,black,decoration={markings, mark=at position 0.5 with {\arrow{>}}},        postaction={decorate}] (C) -- (G);
\draw[thick,black] (F) -- (G);
\draw[thick,black,decoration={markings, mark=at position 0.5 with {\arrow{>}}},        postaction={decorate}] (G) -- (D);
\draw[thick,black,dashed] (C) -- (O);
\draw[thick,black,dashed] (O) -- (D);
\draw[thick,black,dashed] (O) -- (A);
\draw[thick,black,dashed,red!80!black] (I) -- (L);
\draw[thick,black,dashed,red!80!black] (K) -- (L);

\draw[thin] (-1.5,-1.2) .. controls +(0,0.5) and +(-0.3,0.1) .. (-0.8,-0.25);
\node at (-1.5,-1.4) {$\scriptstyle \Pi \circ \varphi$};

\draw[dashed,thin,decoration={markings, mark=at position 0.99 with {\arrow{>}}},        postaction={decorate}] (-0.8,-0.25) .. controls +(0.2,0.0) and +(-0.2,0.0) .. (-0.55,-0.25);

\draw[thin] (-1.5,-0.2) .. controls +(0,0.5) and +(-0.3,0.1) .. (-0.8,1.0);
\node at (-1.5,-0.325) {$\scriptstyle h_2$};

\draw[dashed,thin,decoration={markings, mark=at position 0.99 with {\arrow{>}}},        postaction={decorate}] (-0.8,1.0) .. controls +(0.2,0.0) and +(-0.2,0.0) .. (-0.6,1.05);

\draw[thin] (-1.5,2.0) .. controls +(0.2,0.2) and +(-0.4,0.4) .. (-0.3,1.75);
\node at (-1.75,2.0) {$\scriptstyle G_2$};

\draw[dashed,thin,decoration={markings, mark=at position 0.99 with {\arrow{>}}},        postaction={decorate}] (-0.3,1.75) .. controls +(0.1,-0.1) and +(-0.2,0.5) .. (-0.15,0.75);

\draw[dashed,thin,decoration={markings, mark=at position 0.99 with {\arrow{>}}},        postaction={decorate}] (2,1.5) to (1.6,1.5);
\draw[thin] (2.8, 1.5) to (2,1.5);
\node at (3.2, 1.45) {$\scriptstyle \ori_X$};

\draw[thin,decoration={markings, mark=at position 1.0 with {\arrow{>}}},        postaction={decorate}] (2.95,1.0) to (1.75,1.0);
\node at (3.2,1.0) {$\scriptstyle G_1$};

\draw[thin,decoration={markings, mark=at position 1.0 with {\arrow{>}}},        postaction={decorate}] (3.15,-1.25) to (1.9,-0.25);
\node at (3.15,-1.5) {$\scriptstyle h_1$};

\draw[thin,decoration={markings, mark=at position 1.0 with {\arrow{>}}},        postaction={decorate}] (0.15,-1.75) to (1.15,-0.25);
\node at (0.15,-2.0) {$\scriptstyle \Pi \circ F_1$};

\draw[thin,decoration={markings, mark=at position 0.99 with {\arrow{>}}},        postaction={decorate}] (-1.0,2.75) to (1,1.75);
\node at (-1.0,2.95) {$\scriptstyle h_2$};

\node at (0.2,-1.1) {$\scriptstyle I_3$};
\node at (-0.95,0) {$\scriptstyle I_1$};
\node at (1.85,-0.55) {$\scriptstyle I_2$};

\draw[thin,decoration={markings, mark=at position 1.0 with {\arrow{>}}},        postaction={decorate}] (3,0.5) to (7.0,0.5);
\node at (5,0.85) {$(\id_X \times \tilde{c}_1)^*Q$};
\node at (8,0.5) {$\mathbf{B} \mathbb{Z}_2$.};
\end{tikzpicture}
\]
It is now clear that there exists a map $c_2: I^3 \rightarrow I^3$ which is homotopic to the identity and for which $R=(\id_X \times c_2)^*Q$ has the form
\[
\newcommand{\Depth}{2}
\newcommand{\Height}{2}
\newcommand{\Width}{2}
\begin{tikzpicture}[scale=0.85]
\coordinate (O) at (0,0,0);
\coordinate (A) at (0,\Width,0);
\coordinate (B) at (0,\Width,\Height);
\coordinate (C) at (0,0,\Height);
\coordinate (D) at (\Depth,0,0);
\coordinate (E) at (\Depth,\Width,0);
\coordinate (F) at (\Depth,\Width,\Height);
\coordinate (G) at (\Depth,0,\Height);
\coordinate (I) at (0,0.5*\Width,\Height);
\coordinate (J) at (0,0.5*\Width,0.5*\Height);
\coordinate (K) at (0,0,0.5*\Height);
\coordinate (L) at (0,\Width,0.5*\Height);
\coordinate (M) at (\Depth,0.5*\Width,\Height);

\draw[thick,black] (A) -- (E);
\draw[thick,black] (A) -- (B);
\draw[thick,black] (E) -- (F);
\draw[thick,black] (B) -- (F);
\draw[thick,black,decoration={markings, mark=at position 0.5 with {\arrow{>}}},        postaction={decorate}] (C) -- (B);
\draw[thick,black] (E) -- (D);
\draw[thick,black,decoration={markings, mark=at position 0.5 with {\arrow{>}}},        postaction={decorate}] (C) -- (G);
\draw[thick,black] (F) -- (G);
\draw[thick,black,decoration={markings, mark=at position 0.5 with {\arrow{>}}},        postaction={decorate}] (G) -- (D);
\draw[thick,black,dashed] (C) -- (O);
\draw[thick,black,dashed] (O) -- (D);
\draw[thick,black,dashed] (O) -- (A);

\draw[thin,decoration={markings, mark=at position 1.0 with {\arrow{>}}},        postaction={decorate}] (-1.5,-1.5) to (-0.5,0);
\node at (-1.5,-1.6) {$\scriptstyle \Pi \circ \varphi$};

\draw[thin,decoration={markings, mark=at position 0.99 with {\arrow{>}}},        postaction={decorate}] (-1.0,2.75) to (1,1.75);
\node at (-1.0,2.95) {$\scriptstyle h_2$};

\draw[thin] (-1.5,2.0) .. controls +(0.2,0.2) and +(-0.4,0.4) .. (-0.35,1.55);
\node at (-1.75,2.0) {$\scriptstyle G_2$};

\draw[dashed,thin,decoration={markings, mark=at position 0.99 with {\arrow{>}}},        postaction={decorate}] (-0.35,1.5) .. controls +(0.1,-0.1) and +(-0.2,0.5) .. (-0.25,0.75);

\draw[dashed,thin,decoration={markings, mark=at position 0.99 with {\arrow{>}}},        postaction={decorate}] (0,-0.75) to (0.25,-0.3);
\draw[thin] (-0.5,-1.65) to (0,-0.75);

\node at (-0.5,-1.75) {$\scriptstyle h_1$};

\draw[thin,decoration={markings, mark=at position 1.0 with {\arrow{>}}},        postaction={decorate}] (2.95,0.0) to (1.75,1.0);
\node at (3.3,0.0) {$\scriptstyle G_1$};

\draw[thin] (3,1) .. controls +(-0.5,0.5) and +(0.3,0.1) .. (2,1.5);
\node at (3.5, 1) {$\scriptstyle \ori_X$};

\draw[dashed,thin,decoration={markings, mark=at position 0.99 with {\arrow{>}}},        postaction={decorate}] (2,1.5) .. controls +(-0.2,-0.1) and +(0.4,0.6) .. (1.5,1.0);

\draw[thin,decoration={markings, mark=at position 1.0 with {\arrow{>}}},        postaction={decorate}] (0.5,-1.75) to (1.15,-0.25);
\node at (0.5,-2.0) {$\scriptstyle \Pi \circ F_1$};

\node at (0.2,-1.1) {$\scriptstyle I_3$};
\node at (-0.95,0) {$\scriptstyle I_1$};
\node at (1.85,-0.55) {$\scriptstyle I_2$};

\draw[thin,decoration={markings, mark=at position 1.0 with {\arrow{>}}},        postaction={decorate}] (3,0.5) to (7.0,0.5);
\node at (8,0.5) {$\mathbf{B} \mathbb{Z}_2$.};
\node at (5,0.85) {$R$};
\end{tikzpicture}
\]
It follows that the triple $(X \times I^2, \varphi; R)$ defines a $2$-morphism $(X \times I, H_1; G_1) \Rightarrow (X \times I, H_2; G_2)$ in $\hat{T} \mhyphen \Cob_{\langle n,n-1,n-2 \rangle}^{\Pi}$ and we set
\[
\mathcal{R}_{\mathcal{Z}}^X(\varphi) = \mathcal{Z}(X \times I^2, \varphi; R).
\]
Note that if $\varphi$ is equivalent to $\varphi^{\prime}$, in that $\varphi$ and $\varphi^{\prime}$ represent the same $2$-morphism in $\MMap^{\leq 2}_{\mathbf{B} \mathbb{Z}_2}(X, \hat{T})$, then $\mathcal{Z}(X \times I^2, \varphi; R) = \mathcal{Z}(X \times I^2, \varphi^{\prime}; R^{\prime})$ by the homotopy invariance of $\mathcal{Z}$. It is immediate that if $Q$ and $Q^{\prime}$ are equivalent, then so too are $R$ and $R^{\prime}$. So $\mathcal{R}_{\mathcal{Z}}^X$ is well-defined on $2$-morphisms.

That $\mathcal{R}^X_{\mathcal{Z}}$ defines a functor
\[
\Hom_{\MMap^{\leq 2}_{\mathbf{B} \mathbb{Z}_2}(X, \hat{T})}((f_1,h_1),(f_2,h_2)) \rightarrow \Hom_{\mathcal{C}}(\mathcal{Z}(X,f_1,h_1),\mathcal{Z}(X,f_2,h_2))
\]
can be verified directly. The unit and composition $2$-isomorphisms for $\mathcal{R}^X_{\mathcal{Z}}$ are induced by those of $\mathcal{Z}$.
\end{proof}

\subsection{Orientation twisted theories from twisted cohomology}
\label{sec:primTheories}

In this section we use cohomology with twisted coefficients to construct a basic class of examples of orientation twisted field theories.

Let $\Pi: \hat{S} \rightarrow \mathbf{B} \mathbb{Z}_2$ be a continuous map with associated double cover $S \rightarrow \hat{S}$ and let $\mathsf{A}$ be an abelian group, viewed as a $\mathbb{Z}_2$-module via inversion. Denote by $C_{\bullet}(\hat{S}; \mathsf{A}_{\Pi})$ (resp. $C^{\bullet}(\hat{S}; \mathsf{A}_{\Pi})$) the complex of singular chains (resp. cochains) on $\hat{S}$ with coefficients in the local system $\mathsf{A}_{\Pi} = S \times_{\mathbb{Z}_2} \mathsf{A} \rightarrow \hat{S}$. Pullback along $S \rightarrow \hat{S}$ defines a cochain map $C^{\bullet}(\hat{S}; \mathsf{A}_{\Pi}) \rightarrow C^{\bullet}(S; \mathsf{A})$.

Let $T$ be a topological space and let $\lambda \in Z^n(T; \mathbb{C}^{\times})$. Independently, Turaev \cite[I.2]{turaev2010} and Turner \cite{turner2004} constructed an invertible oriented homotopy field theory
\[
\mathcal{P}_T^{\lambda} : T \mhyphen \Cob^{\ori}_{\langle n, n-1 \rangle} \rightarrow \Vect_{\mathbb{C}}
\]
valued in complex vector spaces. The construction of Turaev is direct while that of Turner is in terms of higher gerbes with connection. Turaev's construction was recently extended by M\"{u}ller and Woike \cite[Theorem 3.19]{muller2018} to give a once-extended theory
\[
\mathcal{P}_T^{\lambda} : T \mhyphen \Cob^{\ori}_{\langle n, n-1,n-2 \rangle} \rightarrow 2\Vect_{\mathbb{C}}.
\]

Fix a continuous map $\Pi: \hat{T} \rightarrow \mathbf{B} \mathbb{Z}_2$ with double cover $T$ and let $\hat{\lambda} \in Z^n(\hat{T}; \mathbb{C}^{\times}_{\Pi})$ be a twisted $n$-cocycle which restricts to $\lambda$. The goal of this section is to modify the constructions of Turaev and M\"{u}ller--Woike so as to define an orientation twisted lift
\[
\mathcal{P}_{\hat{T}}^{\hat{\lambda}} : \hat{T} \mhyphen \Cob^{\Pi}_{\langle n, n-1, n-2\rangle} \rightarrow 2\Vect_{\mathbb{C}}
\]
of $\mathcal{P}_T^{\lambda}$. The basic idea is straightforward: replace fundamental chains at all stages of the constructions \cite{turaev2010}, \cite{muller2018} with their orientation twisted variants.

Let $(X,f;h)$ be an object of $\hat{T} \mhyphen \Cob^{\Pi}_{\langle n, n-1, n-2\rangle}$. Denote by $\Fund(X)$ the groupoid of fundamental cycles of $X$; objects are cycles $c_X \in Z_{n-2}(X; \mathbb{Z}_{\ori_X})$ which represent the fundamental class $[X] \in H_{n-2}(X; \mathbb{Z}_{\ori_X})$ and morphisms $c_{X,1} \rightarrow c_{X,2}$ are chains $d_X \in C_{n-1}(X; \mathbb{Z}_{\ori_X})$ which satisfy $\partial d_X = c_{X,2} - c_{X,1}$. Morphisms are composed using the abelian group structure of $C_{n-1}(X; \mathbb{Z}_{\ori_X})$. Define $\mathcal{P}_{\hat{T}}^{\hat{\lambda}}(X,f;h)$ to be the $\Vect_{\mathbb{C}}$-enriched category whose objects are formal sums
\[
\bigoplus_i W_i \cdot c_{X,i},
\]
with $W_i \in \Vect_{\mathbb{C}}$ and $c_{X,i} \in \Fund (X)$. Morphisms in $\mathcal{P}_{\hat{T}}^{\hat{\lambda}}(X,f;h)$ are defined by $\Vect_{\mathbb{C}}$-linearity and the requirement that
\[
\Hom_{\mathcal{P}_{\hat{T}}^{\hat{\lambda}}(X,f;h)}(c_{X,1}, c_{X,2}) = \mathbb{C}[\Hom_{\Fund(X)}(c_{X,1}, c_{X,2})] \slash \sim,
\]
the quotient of the free vector space on $\Hom_{\Fund(X)}(c_{X,1}, c_{X,2})$ by the relations
\[
d_{X,2} \sim \langle h(f^* \hat{\lambda}), e_X \rangle d_{X,1}
\]
whenever $e_X \in C_n(X; \mathbb{Z}_{\ori_X})$ satisfies $\partial e_X = d_{X,2} - d_{X,1}$. The notation is as follows. The homotopy $h$ induces an isomorphism of local systems $\mathbb{C}^{\times}_{\Pi \circ f} \xrightarrow[]{\sim} \mathbb{C}^{\times}_{\ori_X}$ and we write $h(f^* \hat{\lambda})$ for the image of $f^* \hat{\lambda}$ under the induced map $C^n (X; \mathbb{C}^{\times}_{\Pi \circ f}) \rightarrow C^n (X; \mathbb{C}^{\times}_{\ori_X})$. Note that if $h$ and $h^{\prime}$ are homotopic relative $X \times \partial I$, then the maps $h(-)$ and $h^{\prime}(-)$ on twisted cochains coincide. Finally, $\langle -, -\rangle$ is the canonical pairing between $\ori_X$-twisted cochains and chains.

Next, suppose that
\[
((Y; o_{\bullet}), F; H): (X_1,f_1;h_1) \rightarrow (X_2,f_2;h_2)
\]
is a $1$-morphism in $\hat{T} \mhyphen \Cob^{\Pi}_{\langle n, n-1, n-2\rangle}$. Let $\Fund(Y)$ be the groupoid of fundamental chains of $Y$; objects are chains $c_Y \in C_{n-1}(Y; \mathbb{Z}_{\ori_Y})$ which induce the fundamental class $[Y] \in H_{n-1}(Y, \partial Y; \mathbb{Z}_{\ori_Y})$ and morphisms $c_{Y,1} \rightarrow c_{Y,2}$ are chains $d_Y \in C_n(Y; \mathbb{Z}_{\ori_Y})$ which satisfy $\partial d_Y = c_{Y,2} - c_{Y,1}$. Given fundamental cycles $c_{X_1}$ and $c_{X_2}$ of $X_1$ and $X_2$, respectively, let $\Fund_{c_{X_1}}^{c_{X_2}}(Y) \subset \Fund(Y)$ be the full subgroupoid spanned by fundamental chains which satisfy $\partial c_Y = i_{2*} c_{X_2} - i_{1*} c_{X_1}$. Here we have omitted from the notation the maps $o_k$ which are used to identify the coefficients of $i_{k*} c_{X_k}$ with $\mathbb{Z}_{\ori_Y}$; similar omissions will occur below. Define
\[
Y^{(F;H)}(c_{X_1},c_{X_2}) = \mathbb{C}[\Fund_{c_{X_1}}^{c_{X_2}}(Y)] \slash \sim,
\]
where
\[
c_{Y,2} \sim \langle H(F^*\hat{\lambda}), d_Y \rangle c_{Y,1}
\]
whenever $d_Y \in C_n(Y; \mathbb{Z}_{\ori_Y})$ satisfies $\partial d_Y = c_{Y,2} - c_{Y,1}$.

Let
\[
Y^{(F;H)}(c_{X_1},-) : \Fund(X_2)^{\op} \rightarrow \Vect_{\mathbb{C}}
\]
be the functor which assigns to an object $c_{X_2}$ the vector space $Y^{(F;H)}(c_{X_1},c_{X_2})$ and which assigns to a morphism $d_X: c_{X_2,1} \rightarrow c_{X_2,2}$ the linear map
\[
Y^{(F;H)}(c_{X_1},c_{X_2,2}) \rightarrow Y^{(F;H)}(c_{X_1},c_{X_2,1}), \qquad c_Y \mapsto c_Y - i_{2*} d_X .
\]
Let us verify that this is well-defined. That this assignment defines a linear map $\mathbb{C}[\Fund_{c_{X_1}}^{c_{X_{2},2}}(Y)] \rightarrow \mathbb{C}[\Fund_{c_{X_1}}^{c_{X_2,1}}(Y)]$ follows from the calculation
\begin{eqnarray*}
\partial (c_Y - i_{2*} d_X) &=&
\partial c_Y - i_{2*} \partial d_X \\
&=& (i_{2*} c_{X_2,2} - i_{1*} c_{X_1}) - i_{2*}(c_{X_2,2} - c_{X_2,1}) \\
&=& i_{2*} c_{X_2,1} - i_{1*} c_{X_1}.
\end{eqnarray*}
That this assignment respects the equivalence relation $\sim$ follows from the observation that if $\partial d_Y = c_{Y,2} - c_{Y,1}$, then
\[
(c_{Y,2} - i_{2*} d_X) - (c_{Y,1} - i_{2*} d_X)= \partial d_Y.
\]
Continuing, $Y^{(F;H)}(c_{X_1}, -)$ can be augmented to a bifunctor
\[
Y^{(F;H)}(c_{X_1},-) \cdot (-) : \Fund(X_2)^{\op} \times \Fund(X_2) \rightarrow \mathcal{P}_{\hat{T}}^{\hat{\lambda}}(X_2,f_2;h_2)
\]
by sending an object $(c_{X_2,1}, c_{X_2,2})$ to $Y^{(F;H)}(c_{X_1},c_{X_2,1}) \cdot c_{X_2,2}$. We can then define the required functor $\mathcal{P}_{\hat{T}}^{\hat{\lambda}}((Y; o_{\bullet}), F; H)$ on objects to be the coend
\[
\mathcal{P}_{\hat{T}}^{\hat{\lambda}}((Y; o_{\bullet}), F; H) (c_{X_1}) = \int^{c_{X_2} \in \Fund(X_2)} Y^{(F;H)}(c_{X_1},c_{X_2}) \cdot (c_{X_2}).
\]
The functoriality of $Y^{(F;H)}(c_{X_1},c_{X_2})$ in $c_{X_1}$ determines the action of $\mathcal{P}_{\hat{T}}^{\hat{\lambda}}((Y; o_{\bullet}), F; H)$ on morphisms.

Finally, consider a $2$-morphism
\[
\begin{tikzcd}[column sep=15em]
(X_1, f_1; h_1)
  \arrow[bend left=12]{r}[name=U,below]{}{((Y_1; o_{1,\bullet}),F_1;H_1)} 
  \arrow[bend right=12]{r}[name=D]{}[swap]{((Y_2; o_{2,\bullet}),F_2;H_2)}
& 
(X_2, f_2; h_2).
\arrow[shorten <=3pt,shorten >=3pt,Rightarrow,to path={(U) -- node[label=left:{\scriptsize $((Z;\sigma_{\bullet}), \varphi;\eta)$}] {} (D)}]{}
\end{tikzcd}
\]
For each $c_{X_1} \in \Fund(X_1)$, let
\[
\widetilde{\mathcal{P}}_{\hat{T}}^{\hat{\lambda}}((Z;\sigma_{\bullet}), \varphi;\eta): Y_1^{(F_1;H_1)}(c_{X_1},-) \Rightarrow Y_2^{(F_2;H_2)}(c_{X_1},-)
\]
be the natural transformation whose component $\widetilde{\mathcal{P}}_{\hat{T}}^{\hat{\lambda}}((Z;\sigma_{\bullet}), \varphi;\eta)_{c_{X_2}}$ at $c_{X_2}$ is defined as follows. Given $c_{Y_1} \in \Fund_{c_{X_1}}^{c_{X_2}}(Y_1)$, choose a fundamental chain $c_Z \in C_n(Z, \partial Z; \mathbb{Z}_{\ori_Z})$ of $Z$ which satisfies
\begin{equation}
\label{eq:bigFundChain}
\partial c_Z = j_{2*} c_{Y_2} - j_{1*} c_{Y_1} +(-1)^{n-2} \left( i^{\prime}_{2*} (c_{X_2} \times [0,1]) - i^{\prime}_{1*} (c_{X_1} \times [0,1]) \right)
\end{equation}
with $c_{Y_2} \in \Fund_{c_{X_1}}^{c_{X_2}}(Y_2)$ and set
\begin{equation}
\label{eq:primTheory2Mor}
\widetilde{\mathcal{P}}_{\hat{T}}^{\hat{\lambda}}((Z;\sigma_{\bullet}), \varphi;\eta)_{c_{X_2}} (c_{Y_1}) = \langle \eta (\varphi^* \hat{\lambda}), c_Z \rangle c_{Y_2}.
\end{equation}
The natural transformation $\widetilde{\mathcal{P}}_{\hat{T}}^{\hat{\lambda}}$ induces a natural transformation of coends, thereby producing the required $2$-morphism
\[
\begin{tikzcd}[column sep=18em]
\mathcal{P}_{\hat{T}}^{\hat{\lambda}}(X_1, f_1; h_1)
  \arrow[bend left=12]{r}[name=U,below]{}{\mathcal{P}_{\hat{T}}^{\hat{\lambda}}((Y_1; o_{1,\bullet}),F_1;H_1)} 
  \arrow[bend right=12]{r}[name=D]{}[swap]{\mathcal{P}_{\hat{T}}^{\hat{\lambda}}((Y_2; o_{2,\bullet}),F_2;H_2)}
& 
\mathcal{P}_{\hat{T}}^{\hat{\lambda}}(X_2, f_2; h_2).
\arrow[shorten <=3pt,shorten >=3pt,Rightarrow,to path={(U) -- node[label=left:{\scriptsize $\mathcal{P}_{\hat{T}}^{\hat{\lambda}}((Z;\sigma_{\bullet}), \varphi;\eta)$}] {} (D)}]{}
\end{tikzcd}
\]

We can now state the main result of this section.

\begin{Thm}
\label{thm:unoriPrim}
For each continuous map $\Pi : \hat{T} \rightarrow \mathbf{B} \mathbb{Z}_2$ and twisted $n$-cocycle $\hat{\lambda} \in Z^n(\hat{T}; \mathbb{C}^{\times}_{\Pi})$, the above construction defines an invertible orientation twisted lift
\[
\mathcal{P}_{\hat{T}}^{\hat{\lambda}} : \hat{T} \mhyphen \Cob^{\Pi}_{\langle n, n-1, n-2\rangle} \rightarrow 2\Vect_{\mathbb{C}}
\]
of $\mathcal{P}_T^{\lambda}$. Moreover, the equivalence class of $\mathcal{P}_{\hat{T}}^{\hat{\lambda}}$ in $\hat{T} \mhyphen \TFT^{\Pi}_{\langle n, n-1, n-2 \rangle}$ depends only on the cohomology class $[\hat{\lambda}] \in H^n(\hat{T}; \mathbb{C}^{\times}_{\Pi})$.
\end{Thm}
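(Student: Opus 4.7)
The plan is to imitate the oriented construction of M\"{u}ller--Woike \cite{muller2018} verbatim, with the single substitution ``untwisted fundamental chain paired against $\lambda$'' replaced by ``$\ori$-twisted fundamental chain paired against $h(f^*\hat{\lambda})$'' (and its analogues at the $1$- and $2$-morphism levels using $H$ and $\eta$). The orientation twist data $h,H,\eta$ enters only as the isomorphism of local systems $\mathbb{C}^{\times}_{\Pi\circ f}\xrightarrow{\sim}\mathbb{C}^{\times}_{\ori_X}$ (and analogues) that transports the twisted cocycle $f^*\hat{\lambda}$ into the $\ori_X$-twisted cochain complex, which is the natural home for pairing against fundamental chains of the (possibly nonorientable) manifolds $X$, $Y$ and $Z$. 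Once this bookkeeping is set up, the verifications proceed in close parallel with the oriented case.

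First I would check that the relations defining $\mathcal{P}_{\hat{T}}^{\hat{\lambda}}(X,f;h)$ are well-posed. Two witnesses $e_X,e_X'\in C_n(X;\mathbb{Z}_{\ori_X})$ with $\partial e_X=\partial e_X'=d_{X,2}-d_{X,1}$ differ by an $n$-cycle, and since $X$ is $(n-2)$-dimensional, $H_n(X;\mathbb{Z}_{\ori_X})=0$, so that cycle bounds; combined with the cocycle condition $\delta(h(f^*\hat{\lambda}))=0$ this forces $\langle h(f^*\hat{\lambda}),e_X-e_X'\rangle=1$. The cocycle condition itself follows from $\delta\hat{\lambda}=0$ together with naturality of twisted pullback along $f$ and $h$, and independence from the particular representative of $h$ is automatic because the induced local-system isomorphism depends only on the rel-boundary homotopy class of $h$. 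The analogous checks for $1$- and $2$-morphisms use the relative twisted cohomology of $(Y,\partial Y)$ and $(Z,\partial Z)$ together with the boundary identity \eqref{eq:bigFundChain}; in each case the cocycle property of $H(F^*\hat{\lambda})$ or $\eta(\varphi^*\hat{\lambda})$ guarantees that the scalars $\langle H(F^*\hat{\lambda}),d_Y\rangle$ and $\langle\eta(\varphi^*\hat{\lambda}),c_Z\rangle$ are independent of the choice of witnessing chain.

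Next I would verify pseudofunctoriality, monoidality, and homotopy invariance. Composition of $1$-morphisms is handled by the coend presentation: gluing fundamental chains along a common boundary gives a fundamental chain whose pairing with the concatenated twisted cocycle multiplies the individual scalars, matching composition in $2\Vect_{\mathbb{C}}$. The symmetric monoidal structure reduces to disjoint union of chains. For the homotopy invariance axiom, given $\kappa:Z\times I\to\hat{T}$ rel $\partial Z$ from $\varphi$ to $\varphi'$ with $\eta'*(\Pi\circ\kappa)\simeq\eta$, I would use the cylinder chain $c_Z\times I$ as a witness and apply Stokes in the form
\[
1=\langle\delta(\widetilde{\kappa}^*\hat{\lambda}),c_Z\times I\rangle=\frac{\langle\eta'(\varphi'^*\hat{\lambda}),c_Z\rangle}{\langle\eta(\varphi^*\hat{\lambda}),c_Z\rangle}\cdot(\partial Z\text{-contributions}),
\]
the boundary contributions being trivial because $\kappa$ is stationary on $\partial Z$ and the equivalence $\eta'*(\Pi\circ\kappa)\simeq\eta$ aligns the $\ori_{Z\times I}$-twisted coefficient identifications on the side faces. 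This Stokes calculation is the main technical step, and the compatibility condition $\eta'*(\Pi\circ\kappa)\simeq\eta$ is precisely what is required for it to produce an equality rather than merely a coboundary-up-to-boundary.

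Finally I would address the lift property, invertibility, and cohomology class dependence. That $\mathcal{P}_{\hat{T}}^{\hat{\lambda}}\circ\mathcal{F}\simeq\mathcal{P}_T^{\lambda}$ follows by tracing the definition of $\mathcal{F}$ in Proposition \ref{prop:forgetFunctor}: the homotopy $\omega_M*\nu$ converts $f^*\hat{\lambda}$ into $\rho^*\hat{\lambda}=\lambda$ paired via $\omega_M$, so the formulas reduce to those of M\"uller--Woike. Invertibility holds because each $\mathcal{P}_{\hat{T}}^{\hat{\lambda}}(X,f;h)$ is equivalent to $\Vect_{\mathbb{C}}$: the groupoid $\Fund(X)$ is connected and every morphism acts by a nonzero scalar after passing to the quotient, so the enriched category is a $2$-line. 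For cohomology class dependence, suppose $\hat{\lambda}'=\hat{\lambda}\cdot\delta\mu$ with $\mu\in C^{n-1}(\hat{T};\mathbb{C}^{\times}_{\Pi})$. I would construct a symmetric monoidal pseudo-natural equivalence $\Theta:\mathcal{P}_{\hat{T}}^{\hat{\lambda}}\Rightarrow\mathcal{P}_{\hat{T}}^{\hat{\lambda}'}$ whose $1$-morphism component at $((Y;o_\bullet),F;H)$ rescales $c_Y\mapsto\langle H(F^*\mu),c_Y\rangle^{-1}c_Y$; naturality with respect to $2$-morphisms reduces, via Stokes on $Z$, to $\delta(\eta(\varphi^*\mu))=\eta(\varphi^*\hat{\lambda}')/\eta(\varphi^*\hat{\lambda})$ paired against fundamental chains with controlled boundary, which is exactly the coboundary relation between $\hat{\lambda}$ and $\hat{\lambda}'$. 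The main obstacle throughout is the careful tracking of local-system identifications through $h,H,\eta$, culminating in the Stokes computation underlying homotopy invariance.
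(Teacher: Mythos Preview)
Your proposal is correct and follows essentially the same approach as the paper: reduce to the oriented construction of \cite{muller2018} with $\mathbb{Z}_{\ori}$-twisted fundamental chains, verify homotopy invariance via a Stokes/chain-homotopy computation on the cylinder $Z\times I$, deduce invertibility from uniqueness of twisted fundamental classes, and for a coboundary $\delta\mu$ build the pseudonatural equivalence by the rescaling $c_Y\mapsto\langle H(F^*\mu),c_Y\rangle c_Y$ (the paper also specifies the object-level component, namely the identity on objects and $d_X\mapsto\langle h(f^*\hat\nu),d_X\rangle d_X$ on morphisms of $\Fund(X)$).

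One point in the homotopy invariance step deserves sharpening. Your claim that the $\partial Z$-contributions vanish ``because $\kappa$ is stationary on $\partial Z$'' is not the complete reason. Stationarity lets you factor through the projection $p_{\partial Z}:\partial Z\times I\to\partial Z$, but the pushed-forward $n$-chain $p_{\partial Z*}(\partial c_Z\times[0,1])$ is not literally zero as a singular chain. What makes the contribution trivial is that this chain is an $n$-\emph{cycle} in the $(n-1)$-manifold $\partial Z$, hence a boundary since $H_n(\partial Z;\mathbb{Z}_{\ori_{\partial Z}})=0$, and closed cocycles pair trivially with boundaries. This is exactly the dimension argument you already invoked for the well-posedness of $\mathcal{P}_{\hat{T}}^{\hat{\lambda}}(X,f;h)$, and it is how the paper finishes the computation.
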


\begin{proof}
The verification that $\mathcal{P}_{\hat{T}}^{\hat{\lambda}}$ is indeed a symmetric monoidal pseudofunctor proceeds as in the oriented case. The key points of the proof in the oriented case are basic properties of coends, which continue to hold without change in the present setting, and the Glueing Lemma \cite[Lemma 3.3]{muller2018}. The latter admits a straightforward modification in which homology with $\mathbb{Z}_{\ori_{(-)}}$, instead of $\mathbb{Z}$, coefficients is used. The proof is therefore very similar to that of \cite[\S 3]{muller2018} and we omit the details.

Let us verify the homotopy invariance of $\mathcal{P}_{\hat{T}}^{\hat{\lambda}}$. The argument again mirrors the oriented case. Let $\kappa: Z \times I \rightarrow \hat{T}$ be a homotopy relative $\partial Z$ from $\varphi$ to $\varphi^{\prime}$ which satisfies $\eta^{\prime} * (\Pi \circ \kappa) \simeq \eta$. Fix a homotopy $Q$ realizing this equivalence. After suppressing the $Z$ direction, the map $Q$ can be depicted as
\[
\newcommand{\Height}{2}
\newcommand{\Width}{2}
\begin{tikzpicture}[scale=0.8,baseline= (a).base]
\coordinate (O) at (0,0);
\coordinate (A) at (\Width,0);
\coordinate (B) at (\Width,\Height);
\coordinate (C) at (0,\Height);
\coordinate (M) at (0,0.5*\Height);

\draw[thick,black] (A) -- (B);
\draw[thick,black] (A) -- (O);
\draw[thick,black] (B) -- (C);
\draw[thick,black] (O) -- (C);

\node[draw,fill=black,scale=0.25,circle] at (M) {};

\node at (-0.3,0.8* \Height) {$\scriptstyle \eta^{\prime}$};

\node at (-0.5,0.2* \Height) {$\scriptstyle \Pi \circ \kappa$};

\node at (0.5*\Width, \Height+0.2) {$\scriptstyle \ori_Z$};

\node at (\Width + 0.2,0.5* \Height) {$\scriptstyle \eta$};

\node at (0.5*\Width,-0.25) {$\scriptstyle \Pi \circ \varphi$};

\node at (-0.5,0.5* \Height) {$\scriptstyle \Pi \circ \varphi^{\prime}$};

\draw[thin,decoration={markings, mark=at position 1.0 with {\arrow{>}}},        postaction={decorate}] (2.5,0.5*\Height) to (4,0.5*\Height);
\node at (3.25,0.65*\Height) {$Q$};
\node at (4.7,0.5*\Height) {$\mathbf{B} \mathbb{Z}_2$.};
\end{tikzpicture}
\]
As in the proof of Proposition \ref{prop:vbMappingSpace}, we construct from $Q$ a map
\[
\newcommand{\Height}{2}
\newcommand{\Width}{2}
\begin{tikzpicture}[scale=0.8,baseline= (a).base]
\coordinate (O) at (0,0);
\coordinate (A) at (\Width,0);
\coordinate (B) at (\Width,\Height);
\coordinate (C) at (0,\Height);
\coordinate (M) at (0,0);

\draw[thick,black] (A) -- (B);
\draw[thick,black] (A) -- (O);
\draw[thick,black] (B) -- (C);
\draw[thick,black] (O) -- (C);

\node[draw,fill=black,scale=0.25,circle] at (M) {};

\node at (0.5*\Width, \Height+0.2) {$\scriptstyle \ori_Z$};

\node at (\Width + 0.2,0.5* \Height) {$\scriptstyle \eta$};

\node at (0.5*\Width,-0.25) {$\scriptstyle \Pi \circ \kappa$};

\node at (\Width+0.2,-0.25) {$\scriptstyle \Pi \circ \varphi$};

\node at (-0.2,-0.25) {$\scriptstyle \Pi \circ \varphi^{\prime}$};

\node at (-0.4,0.5* \Height) {$\scriptstyle \eta^{\prime}$};

\draw[thin,decoration={markings, mark=at position 1.0 with {\arrow{>}}},        postaction={decorate}] (2.5,0.5*\Height) to (4,0.5*\Height);
\node at (3.25,0.65*\Height) {$R$};
\node at (4.7,0.5*\Height) {$\mathbf{B} \mathbb{Z}_2$,};
\end{tikzpicture}
\]
thereby giving a homotopy from $\Pi \circ \kappa$ to $\ori_{Z \times I}$. By functoriality of homology with local coefficients, the pair $(\varphi; \eta)$ determines a chain map
\[
(\varphi; \eta)_*: C_{\bullet}(Z; \mathbb{Z}_{\ori_Z}) \rightarrow C_{\bullet}(\hat{T}; \mathbb{Z}_{\Pi}),
\]
and similarly for $(\varphi^{\prime}; \eta^{\prime})$ and $(\kappa; R)$. The composed map
\[
r: C_{\bullet}(Z; \mathbb{Z}_{\ori_Z}) \xrightarrow[]{(-) \times [0,1]}C_{\bullet+1}(Z \times [0,1]; \mathbb{Z}_{\ori_Z \times [0,1]}) \xrightarrow[]{(\kappa; R)_*}  C_{\bullet+1}(\hat{T}; \mathbb{Z}_{\ori_{\Pi}})
\]
is a chain homotopy from $(\varphi; \eta)_*$ to $(\varphi^{\prime}; \eta^{\prime})_*$. 

Fix fundamental chains $c_{X_k} \in \Fund(X_k)$, $c_{Y_k} \in \Fund_{c_{X_1}}^{c_{X_2}}(Y_k)$ and $c_Z$ satisfying equation \eqref{eq:bigFundChain}. We compute
\begin{eqnarray*}
\langle \eta(\varphi^* \hat{\lambda}) -\eta^{\prime}(\varphi^{\prime *} \hat{\lambda}), c_Z \rangle 
&=&
\langle \hat{\lambda}, \left( (\varphi;\eta)_* - (\varphi^{\prime};\eta^{\prime})_* \right) c_Z \rangle \\
&=&
\langle \hat{\lambda}, (r_{n-1} \partial  + \partial r_n)c_Z \rangle \\
&=&
\langle \hat{\lambda}, (\kappa;R)_*(\partial c_Z \times [0,1]) \rangle\\
&=&
\langle (\kappa;R)^* \hat{\lambda}, \partial c_Z \times [0,1] \rangle \\
&=&
\langle \left( (\varphi;\eta) \circ p_{\partial Z} \right)^* \hat{\lambda}, \partial c_Z \times [0,1] \rangle \\
&=&
\langle \eta(\varphi^* \hat{\lambda}), p_{\partial Z *} (\partial c_Z \times [0,1]) \rangle.
\end{eqnarray*}
The third and fifth equalities follow from the fact that $\hat{\lambda}$ is closed and that $\kappa$ is a homotopy relative $\partial Z$, respectively. Here $p_{\partial Z} : \partial Z \times [0,1] \rightarrow \partial Z$ is the canonical projection and $p_{\partial Z *}$ denotes the associated map on twisted homology. The $n$-chain $p_{\partial Z *} (\partial c_Z \times [0,1])$ is a cycle, as follows from the construction of $c_Z$, and hence is also a boundary, as $H_n(\partial Z; \mathbb{Z}_{\ori_{\partial Z}})=0$. In view of the definition \eqref{eq:primTheory2Mor}, this completes the verification of the homotopy axiom.

The uniqueness of fundamental classes in twisted cohomology implies that $\mathcal{P}^{\hat{\lambda}}_{\hat{T}}$ factors through the Picard $2$-groupoid of $2 \Vect_{\mathbb{C}}$. This shows that the theory $\mathcal{P}^{\hat{\lambda}}_{\hat{T}}$ is invertible.

Upon restriction to $T \mhyphen \Cob_{\langle n, n-1, n-2 \rangle}^{\ori}$, the orientations trivialize all orientation double covers appearing the construction of $\mathcal{P}_{\hat{T}}^{\hat{\lambda}}$. The definition of $\mathcal{P}_{\hat{T}}^{\hat{\lambda}}$ then reduces to that of \cite{muller2018}. Hence $\mathcal{P}_{\hat{T}}^{\hat{\lambda}}$ is indeed a lift $\mathcal{P}_{T}^{\lambda}$.

Consider then the final statement. Since $\hat{T} \mhyphen \TFT^{\Pi}_{\langle n, n-1, n-2 \rangle}$ is a $2$-groupoid, it suffices to associate to each cochain $\hat{\nu} \in C^{n-1}(\hat{T}; \mathbb{C}^{\times}_{\Pi})$ a symmetric monoidal pseudonatural transformation
\[
\mathcal{Q}_{\hat{T}}^{\hat{\lambda},\hat{\nu}} : \mathcal{P}_{\hat{T}}^{\hat{\lambda}} \rightarrow \mathcal{P}_{\hat{T}}^{\hat{\lambda} \cdot d \hat{\nu}^{-1}}.
\]
This can be done as follows. Define the component of $\mathcal{Q}_{\hat{T}}^{\hat{\lambda},\hat{\nu}}$ at $(X,f;h) \in \hat{T} \mhyphen \Cob_{\langle n, n-1, n-2 \rangle}^{\Pi}$ to be the functor
\[
\mathcal{P}_{\hat{T}}^{\hat{\lambda}}(X,f;h) \rightarrow \mathcal{P}_{\hat{T}}^{\hat{\lambda} \cdot d \hat{\nu}^{-1}}(X,f;h)
\]
which is the identity on objects and which sends $d_X \in \Hom_{\Fund(X)}(c_{X,1}, c_{X,2})$ to $\langle h(f^* \hat{\nu}), d_X \rangle d_X$. Given a $1$-morphism
\[
((Y; o_{\bullet}), F; H): (X_1,f_1;h_1) \rightarrow (X_2,f_2;h_2)
\]
in $\hat{T} \mhyphen \Cob_{\langle n, n-1, n-2 \rangle}^{\Pi}$, define the compatibility $2$-morphism
\[
\begin{tikzpicture}[baseline= (a).base]
\node[scale=1] (a) at (0,0){
\begin{tikzcd}[column sep=10em, row sep=2.5em]
\mathcal{P}_{\hat{T}}^{\hat{\lambda}}(X_1) \arrow{r}[above]{\mathcal{P}_{\hat{T}}^{\hat{\lambda}}(Y)} \arrow{d}[left]{\mathcal{Q}_{\hat{T}}^{\hat{\lambda},\hat{\nu}}(X_1)}& \mathcal{P}_{\hat{T}}^{\hat{\lambda}}(X_2) \arrow{d}{\mathcal{Q}_{\hat{T}}^{\hat{\lambda},\hat{\nu}}(X_2)}  \arrow[Rightarrow,shorten <= 0.65em, shorten >= 0.65em]{dl}[above left]{\mathcal{Q}_{\hat{T}}^{\hat{\lambda},\hat{\nu}}(Y)} \\
\mathcal{P}_{\hat{T}}^{\hat{\lambda} \cdot d \hat{\nu}^{-1}}(X_1) \arrow{r}[below]{\mathcal{P}_{\hat{T}}^{\hat{\lambda} \cdot d \hat{\nu}^{-1}}(Y)} &  \mathcal{P}_{\hat{T}}^{\hat{\lambda} \cdot d \hat{\nu}^{-1}}(X_2)
\end{tikzcd}
};
\end{tikzpicture}
\]
to be that induced by the $\mathbb{C}$-linear map
\[
\mathbb{C}[\Fund_{c_{X_1}}^{c_{X_2}}(Y)] \slash \sim_{\hat{\lambda}} \rightarrow \mathbb{C}[\Fund_{c_{X_1}}^{c_{X_2}}(Y)] \slash \sim_{\hat{\lambda} \cdot d \hat{\nu}^{-1}}, \qquad c_Y \mapsto \langle H(F^* \hat{\nu}), c_Y \rangle c_Y.
\]
The subscripts on $\sim$ indicate the $n$-cocycle used to define the equivalence relation on $\mathbb{C}[\Fund_{c_{X_1}}^{c_{X_2}}(Y)]$. More precisely, these linear maps are the components of a natural transformation
\[
Y^{\hat{\lambda},(F;H)}(c_{X_1}, -) \Rightarrow Y^{\hat{\lambda} \cdot d \hat{\nu}^{-1},(F;H)}(c_{X_1}, -)
\]
which in turn induces the required morphism of coends. The modifications which encode the compatibility of $\mathcal{Q}_{\hat{T}}^{\hat{\lambda},\hat{\nu}}$ with the monoidal structure can be taken to be the identities. 
\end{proof}

\section{Orientation twisted equivariant field theories and orbifolding}
\label{sec:twistEquivThy}

In this section we study the simplest class of orientation twisted field theories, that in which the target $\hat{T}$ is aspherical. Concretely, we take $\hat{T}$ to be the classifying space of a finite $\mathbb{Z}_2$-graded group. This leads to an interpretation in terms of equivariant field theories.

\subsection{Finite \texorpdfstring{$\mathbb{Z}_2$}{}-graded groups}

Let $\Grp$ be the category of finite groups. The slice category $\Grp_{\slash \mathbb{Z}_2}$ is the category of finite $\mathbb{Z}_2$-graded groups. The identity map $\mathbb{Z}_2 \xrightarrow[]{\id} \mathbb{Z}_2$ is a terminal object of $\Grp_{\slash \mathbb{Z}_2}$. Objects of $\Grp_{\slash \mathbb{Z}_2}$ will be denoted by $\pi_{\hat{\mathsf{G}}}: \hat{\mathsf{G}} \rightarrow \mathbb{Z}_2$. We write $\pi$ for $\pi_{\hat{\mathsf{G}}}$ if it will not cause confusion. If $\pi$ is non-trivial, which we will assume to be the case unless explicitly mentioned otherwise, then $\hat{\mathsf{G}}$ is an extension
\[
1 \rightarrow \mathsf{G} \xrightarrow[]{i} \hat{\mathsf{G}} \xrightarrow[]{\pi} \mathbb{Z}_2 \rightarrow 1.
\]
The map $\pi$ induces a morphism of classifying groupoids $B \pi: B \hat{\mathsf{G}} \rightarrow B \mathbb{Z}_2$. The associated groupoid double cover is equivalent to $Bi: B \mathsf{G} \rightarrow B \hat{\mathsf{G}}$. Passing to classifying spaces gives a map $\mathbf{B} \pi: \mathbf{B} \hat{\mathsf{G}} \rightarrow \mathbf{B} \mathbb{Z}_2$ with associated double cover $\mathbf{B} i: \mathbf{B} \mathsf{G} \rightarrow \mathbf{B} \hat{\mathsf{G}}$.

\subsection{Groupoids of orientation twisted principal bundles}
\label{sec:oriFramedBundles}

In this section we introduce the groupoid of orientation twisted principal $\mathsf{G}$-bundles on a manifold. These groupoids are central to our construction of unoriented Dijkgraaf--Witten theory.

Let $M$ be a manifold with classifying map $\ori_M: M \rightarrow \mathbf{B} \mathbb{Z}_2$. Fix a $\mathbb{Z}_2$-graded group $\pi: \hat{\mathsf{G}} \rightarrow \mathbb{Z}_2$. The homomorphism $\pi$ induces a functor
\[
\Ind_{\pi}: \BBun_{\hat{\mathsf{G}}}(M) \rightarrow \BBun_{\mathbb{Z}_2}(M),
\]
a principal $\hat{\mathsf{G}}$-bundle $P \rightarrow M$ being sent to the double cover $P \times_{\hat{\mathsf{G}}}^{\pi} \mathbb{Z}_2 \rightarrow M$.

\begin{Def}
The groupoid $\BBun_{\hat{\mathsf{G}}}^{\ori}(M)$ of orientation twisted principal $\mathsf{G}$-bundles on $M$ is the homotopy fibre of the functor $\Ind_{\pi}$ over $\ori_M \rightarrow M$.
\end{Def}

Objects of $\BBun_{\hat{\mathsf{G}}}^{\ori}(M)$ are thus pairs $(P, \epsilon)$ consisting of a principal $\hat{\mathsf{G}}$-bundle $P \rightarrow M$ and an isomorphism $\epsilon: P \times^{\pi}_{\hat{\mathsf{G}}} \mathbb{Z}_2 \xrightarrow[]{\sim} \ori_M$ of double covers, which we call an orientation framing of $P$. A morphism $(P, \epsilon) \rightarrow (P^{\prime}, \epsilon^{\prime})$ is a morphism $f: P \rightarrow P^{\prime}$ of principal $\hat{\mathsf{G}}$-bundles which satisfies $\epsilon^{\prime} \circ \Ind_{\pi}(f) = \epsilon$.

\begin{Prop}
\label{prop:restrFunctor}
A morphism $(k; h): (M, \ori_M) \rightarrow (N, \ori_N)$ of smooth manifolds over $\mathbf{B} \mathbb{Z}_2$ induces a functor $(k; h)^* : \BBun_{\hat{\mathsf{G}}}^{\ori}(N) \rightarrow \BBun_{\hat{\mathsf{G}}}^{\ori}(M)$. 
\end{Prop}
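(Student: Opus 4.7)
The plan is to construct $(k; h)^*$ by combining ordinary pullback of principal bundles with the isomorphism of $\mathbb{Z}_2$-bundles determined by $h$. Concretely, the map $k: M \to N$ induces pullback functors $k^*: \BBun_{\hat{\mathsf{G}}}(N) \to \BBun_{\hat{\mathsf{G}}}(M)$ and $k^*: \BBun_{\mathbb{Z}_2}(N) \to \BBun_{\mathbb{Z}_2}(M)$, and the associated bundle construction commutes with base change, yielding a canonical natural isomorphism $\alpha: \Ind_{\pi} \circ k^* \xrightarrow{\sim} k^* \circ \Ind_{\pi}$ whose component at $P \to N$ identifies $k^*P \times^{\pi}_{\hat{\mathsf{G}}} \mathbb{Z}_2$ with $k^*(P \times^{\pi}_{\hat{\mathsf{G}}} \mathbb{Z}_2)$. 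On the other side, the equivalence class of homotopies $h$ from $\ori_N \circ k$ to $\ori_M$ induces a canonical isomorphism of double covers $\epsilon_h : k^*\ori_N \xrightarrow{\sim} \ori_M$ on $M$; this assignment is well-defined on the equivalence class of $h$ because homotopies of homotopies relative $M \times \partial I$ induce equal isomorphisms of the classified $\mathbb{Z}_2$-bundles.

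With these ingredients in place, I define $(k; h)^*$ on objects $(P, \epsilon) \in \BBun_{\hat{\mathsf{G}}}^{\ori}(N)$ by
\[
(k; h)^*(P, \epsilon) = \bigl(k^*P, \; \epsilon_h \circ k^*(\epsilon) \circ \alpha_P\bigr),
\]
where the second component is the composite
\[
k^*P \times^{\pi}_{\hat{\mathsf{G}}} \mathbb{Z}_2 \xrightarrow[]{\alpha_P} k^*\bigl(P \times^{\pi}_{\hat{\mathsf{G}}} \mathbb{Z}_2\bigr) \xrightarrow[]{k^*\epsilon} k^*\ori_N \xrightarrow[]{\epsilon_h} \ori_M,
\]
and on a morphism $f: (P, \epsilon) \to (P', \epsilon')$ by $(k; h)^*(f) = k^*(f)$. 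The compatibility of $k^*(f)$ with the new orientation framings, namely $\epsilon_h \circ k^*(\epsilon') \circ \alpha_{P'} \circ \Ind_{\pi}(k^*f) = \epsilon_h \circ k^*(\epsilon) \circ \alpha_P$, reduces via the naturality of $\alpha$ and the functoriality of $k^*$ to the pulled-back framing identity $\epsilon' \circ \Ind_{\pi}(f) = \epsilon$ holding on $N$.

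Functoriality of $(k; h)^*$ then follows from the functoriality of ordinary pullback together with the coherence of $\alpha$. The only subtle point is the standard one that pullback of principal bundles is strictly functorial only after a choice of pullback construction; without such a choice $(k; h)^*$ is naturally a pseudofunctor, but this refinement does not affect the statement of the proposition. I expect the main point of care to be the verification that $\epsilon_h$ depends only on the equivalence class of $h$, which, as indicated, reduces to the standard fact about homotopies of classifying maps into $\mathbf{B}\mathbb{Z}_2$; the remaining verifications are routine diagram chases.
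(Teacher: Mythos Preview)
Your proof is correct and follows essentially the same approach as the paper: define $(k;h)^*$ on objects by pulling back the $\hat{\mathsf{G}}$-bundle and composing the canonical isomorphism $\Ind_{\pi}(k^*P)\simeq k^*\Ind_{\pi}(P)$, the pulled-back framing $k^*\epsilon$, and the isomorphism $k^*\ori_N\xrightarrow{\sim}\ori_M$ coming from $h$, and on morphisms by pullback along $k$. Your write-up is in fact more careful than the paper's, which omits the verification that pulled-back morphisms respect the new framings and the remark about dependence of $\epsilon_h$ only on the equivalence class of $h$.
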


\begin{proof}
Let $(P, \epsilon) \in \BBun_{\hat{\mathsf{G}}}^{\ori}(M)$. The composition
\[
\Ind_{\pi}(k^*P) \simeq k^* \Ind_{\pi}(P) \xrightarrow[]{f^* \epsilon} k^* \ori_N \xrightarrow[]{h} \ori_M
\]
defines an orientation framing of $k^*P$, where the unnamed isomorphism is canonical. This defines $(k; h)^*$ on objects. On morphisms $(k; h)^*$ acts as pullback by $k$.
\end{proof}

\begin{Prop}
A morphism $\hat{\phi}: \hat{\mathsf{G}} \rightarrow \hat{\mathsf{H}}$ of finite $\mathbb{Z}_2$-graded groups induces a functor $\hat{\phi}_*: \BBun_{\hat{\mathsf{G}}}^{\ori}(M) \rightarrow \BBun_{\hat{\mathsf{H}}}^{\ori}(M)$.
\end{Prop}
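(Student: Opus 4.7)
The plan is to use the standard associated-bundle construction to define $\hat{\phi}_*$ on the underlying principal bundles, and then to exploit the fact that $\hat{\phi}$ lives over $\mathbb{Z}_2$ in order to transport the orientation framing.

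First I would define $\hat{\phi}_*$ on objects as follows. Given $(P, \epsilon) \in \BBun_{\hat{\mathsf{G}}}^{\ori}(M)$, form the induced principal $\hat{\mathsf{H}}$-bundle
\[
\hat{\phi}_* P \;=\; P \times^{\hat{\phi}}_{\hat{\mathsf{G}}} \hat{\mathsf{H}} \;\longrightarrow\; M,
\]
where $\hat{\mathsf{G}}$ acts on $\hat{\mathsf{H}}$ via $\hat{\phi}$ by left multiplication. The key computation is that, because $\pi_{\hat{\mathsf{H}}} \circ \hat{\phi} = \pi_{\hat{\mathsf{G}}}$ by definition of a morphism in $\Grp_{/\mathbb{Z}_2}$, there is a canonical isomorphism of $\mathbb{Z}_2$-bundles
\[
\Ind_{\pi_{\hat{\mathsf{H}}}}(\hat{\phi}_* P) \;=\; \bigl(P \times^{\hat{\phi}}_{\hat{\mathsf{G}}} \hat{\mathsf{H}}\bigr) \times^{\pi_{\hat{\mathsf{H}}}}_{\hat{\mathsf{H}}} \mathbb{Z}_2 \;\xrightarrow{\;\sim\;}\; P \times^{\pi_{\hat{\mathsf{G}}}}_{\hat{\mathsf{G}}} \mathbb{Z}_2 \;=\; \Ind_{\pi_{\hat{\mathsf{G}}}}(P),
\]
induced by $[p, h] \mapsto [p, \pi_{\hat{\mathsf{H}}}(h)]$ (using $\pi_{\hat{\mathsf{H}}} \circ \hat{\phi} = \pi_{\hat{\mathsf{G}}}$ to verify $\hat{\mathsf{G}}$-equivariance). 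Composing this canonical isomorphism with $\epsilon$ yields an orientation framing $\hat{\phi}_* \epsilon$ of $\hat{\phi}_* P$, and I set $\hat{\phi}_*(P, \epsilon) = (\hat{\phi}_* P, \hat{\phi}_* \epsilon)$.

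Next I would define $\hat{\phi}_*$ on morphisms by the standard naturality of the associated-bundle construction: a morphism $f: P \to P^{\prime}$ of principal $\hat{\mathsf{G}}$-bundles induces $\hat{\phi}_* f: \hat{\phi}_* P \to \hat{\phi}_* P^{\prime}$ via $[p, h] \mapsto [f(p), h]$, and I need to check the framing condition $\hat{\phi}_* \epsilon^{\prime} \circ \Ind_{\pi_{\hat{\mathsf{H}}}}(\hat{\phi}_* f) = \hat{\phi}_* \epsilon$. This reduces, via the canonical isomorphism above, to the original condition $\epsilon^{\prime} \circ \Ind_{\pi_{\hat{\mathsf{G}}}}(f) = \epsilon$, so it is automatic.

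Finally, functoriality (respect of identities and composition) follows from the corresponding facts for the classical induction functor $\BBun_{\hat{\mathsf{G}}}(M) \to \BBun_{\hat{\mathsf{H}}}(M)$, combined with the naturality of the canonical isomorphism $\Ind_{\pi_{\hat{\mathsf{H}}}} \circ \hat{\phi}_* \cong \Ind_{\pi_{\hat{\mathsf{G}}}}$. There is no serious obstacle here; the only thing one must be careful about is the bookkeeping with associated bundles, but the condition $\pi_{\hat{\mathsf{H}}} \circ \hat{\phi} = \pi_{\hat{\mathsf{G}}}$ makes the compatibility between induction along $\hat{\phi}$ and induction along the gradings essentially tautological. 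Alternatively, one may obtain $\hat{\phi}_*$ more conceptually as the functor induced on homotopy fibres by the commuting square of induction functors $\Ind_{\hat{\phi}}: \BBun_{\hat{\mathsf{G}}}(M) \to \BBun_{\hat{\mathsf{H}}}(M)$ and $\id: \BBun_{\mathbb{Z}_2}(M) \to \BBun_{\mathbb{Z}_2}(M)$, evaluated over $\ori_M$.
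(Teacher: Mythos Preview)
Your proof is correct and follows essentially the same approach as the paper: define $\hat{\phi}_*(P,\epsilon) = (P \times^{\hat{\phi}}_{\hat{\mathsf{G}}} \hat{\mathsf{H}}, \epsilon)$ where the new framing is $\epsilon$ precomposed with the canonical isomorphism $(P \times^{\hat{\phi}}_{\hat{\mathsf{G}}} \hat{\mathsf{H}}) \times^{\pi_{\hat{\mathsf{H}}}}_{\hat{\mathsf{H}}} \mathbb{Z}_2 \simeq P \times^{\pi_{\hat{\mathsf{G}}}}_{\hat{\mathsf{G}}} \mathbb{Z}_2$, and $\hat{\phi}_*(f) = f \times^{\hat{\phi}}_{\hat{\mathsf{G}}} \id_{\hat{\mathsf{H}}}$ on morphisms. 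You supply more detail than the paper (the explicit verification of the framing condition on morphisms and the alternative homotopy-fibre description), but the content is the same.
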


\begin{proof}
Given $(P, \epsilon) \in \BBun_{\hat{\mathsf{G}}}^{\ori}(M)$, let $\hat{\phi}_*(P, \epsilon) = (P \times_{\hat{\mathsf{G}}} \hat{\mathsf{H}}, \epsilon)$ where, by a slight abuse of notation, the map $\epsilon$ on the right hand side denotes the composition
\[
(P \times^{\hat{\phi}}_{\hat{\mathsf{G}}} \hat{\mathsf{H}} )\times^{\pi_{\hat{\mathsf{H}}}}_{\hat{\mathsf{H}}} \mathbb{Z}_2 \simeq P \times^{\pi_{\hat{\mathsf{G}}}}_{\hat{\mathsf{G}}} \mathbb{Z}_2 \xrightarrow[]{\epsilon} \ori_M.
\]
At the level of morphisms let $\hat{\phi}_*(f) = f \times^{\hat{\phi}}_{\hat{\mathsf{G}}} \id_{\hat{\mathsf{H}}}$.
\end{proof}

The following three propositions give alternative models of $\BBun_{\hat{\mathsf{G}}}^{\ori}(M)$.

\begin{Prop}
\label{prop:holonomyDescr}
Let $M$ be connected with chosen basepoint. There is an equivalence
\[
\BBun_{\hat{\mathsf{G}}}^{\ori}(M) \simeq \Hom_{\Grp}^{\ori_M}(\pi_1(M), \hat{\mathsf{G}}) \git \mathsf{G},
\]
where $\Hom_{\Grp}^{\ori_M}(\pi_1(M), \hat{\mathsf{G}})$ denotes the fibre of the map
\[
\pi \circ (-): 
\Hom_{\Grp}(\pi_1(M), \hat{\mathsf{G}}) \rightarrow \Hom_{\Grp}(\pi_1(M), \mathbb{Z}_2)
\]
over $\ori_M$ and $\mathsf{G}$ acts on $\Hom_{\Grp}^{\ori_M}(\pi_1(M), \hat{\mathsf{G}})$ by conjugation.
\end{Prop}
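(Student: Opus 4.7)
The plan is to reduce to Lemma \ref{lem:2Fibre} via the holonomy description of $\BBun_{\hat{\mathsf{G}}}(M)$ recalled in Section \ref{sec:moduliBundles}. By definition, $\BBun_{\hat{\mathsf{G}}}^{\ori}(M)$ is the homotopy fibre of $\Ind_{\pi}: \BBun_{\hat{\mathsf{G}}}(M) \rightarrow \BBun_{\mathbb{Z}_2}(M)$ over the object $\ori_M$, and homotopy fibres are preserved under equivalences of groupoids. Hence the first step is to transport the functor $\Ind_{\pi}$ across the equivalences
\[
\BBun_{\hat{\mathsf{G}}}(M) \simeq \Hom_{\Grp}(\pi_1(M),\hat{\mathsf{G}}) \git \hat{\mathsf{G}}, \qquad \BBun_{\mathbb{Z}_2}(M) \simeq \Hom_{\Grp}(\pi_1(M),\mathbb{Z}_2) \git \mathbb{Z}_2
\]
and identify it with a more tractable functor of action groupoids.

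To do this, observe that if $P \rightarrow M$ has holonomy representation $\rho_P: \pi_1(M) \rightarrow \hat{\mathsf{G}}$, then the associated double cover $P \times^\pi_{\hat{\mathsf{G}}} \mathbb{Z}_2 \rightarrow M$ has holonomy representation $\pi \circ \rho_P$. Consequently, up to the chosen equivalences, $\Ind_{\pi}$ becomes the functor induced by the $\pi$-equivariant map
\[
F = \pi \circ (-): \Hom_{\Grp}(\pi_1(M), \hat{\mathsf{G}}) \rightarrow \Hom_{\Grp}(\pi_1(M), \mathbb{Z}_2),
\]
where $\hat{\mathsf{G}}$ acts by conjugation on the source and $\mathbb{Z}_2$ acts by conjugation---hence trivially, as $\mathbb{Z}_2$ is abelian---on the target. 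Note also that $\pi: \hat{\mathsf{G}} \twoheadrightarrow \mathbb{Z}_2$ is surjective with kernel $\mathsf{G}$.

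The hypotheses of Lemma \ref{lem:2Fibre} are therefore satisfied with $\mathsf{G} \rightsquigarrow \hat{\mathsf{G}}$ and $\mathsf{H} \rightsquigarrow \mathbb{Z}_2$, and the lemma identifies the homotopy fibre of $\widetilde{F}$ over the class of $\ori_M \in \Hom_{\Grp}(\pi_1(M), \mathbb{Z}_2) \git \mathbb{Z}_2$ with
\[
F^{-1}(\ori_M) \git \mathsf{G} \;=\; \Hom_{\Grp}^{\ori_M}(\pi_1(M),\hat{\mathsf{G}}) \git \mathsf{G},
\]
which is the claimed equivalence. The main point requiring care is simply the bookkeeping in the first paragraph: one must check that the homotopy-fibre construction is genuinely equivalence-invariant at the level of these models, and that the chosen equivalence of $\BBun_{\mathbb{Z}_2}(M)$ with $\Hom_{\Grp}(\pi_1(M),\mathbb{Z}_2) \git \mathbb{Z}_2$ sends the object $\ori_M$ to the homomorphism $\ori_M: \pi_1(M) \rightarrow \mathbb{Z}_2$ used in the statement; both are standard and amount to unpacking the definition of the holonomy equivalence.
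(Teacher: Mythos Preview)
Your proof is correct and follows essentially the same approach as the paper: transport $\Ind_{\pi}$ across the holonomy equivalences of Section~\ref{sec:moduliBundles} to identify it with the functor induced by $\pi \circ (-)$, then apply Lemma~\ref{lem:2Fibre}. The paper's version is more terse, leaving implicit the verification of the hypotheses (surjectivity of $\pi$, triviality of the $\mathbb{Z}_2$-action) that you spell out.
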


\begin{proof}
As recalled in Section \ref{sec:moduliBundles}, there are equivalences
\[
\BBun_{\hat{\mathsf{G}}}(M) \simeq \Hom_{\Grp}(\pi_1(M), \hat{\mathsf{G}}) \git \hat{\mathsf{G}},
\qquad
\BBun_{\mathbb{Z}_2}(M) \simeq \Hom_{\Grp}(\pi_1(M), \mathbb{Z}_2) \git \mathbb{Z}_2.
\]
The claimed equivalence now follows by applying Lemma \ref{lem:2Fibre} to the map $\pi \circ (-)$.
\end{proof}

\begin{Prop}
\label{prop:mappingSpaceDescr}
There is an equivalence $\BBun_{\hat{\mathsf{G}}}^{\ori}(M) \simeq \MMap_{\mathbf{B} \mathbb{Z}_2}(M, \mathbf{B} \hat{\mathsf{G}})$.
\end{Prop}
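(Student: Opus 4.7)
The plan is to identify both sides as homotopy fibres of equivalent functors between equivalent groupoids, and then appeal to the invariance of homotopy limits under equivalences of diagrams.

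First, I would apply the classifying space model recalled in Section \ref{sec:moduliBundles} to both $\hat{\mathsf{G}}$ and $\mathbb{Z}_2$, obtaining canonical equivalences
\[
\Phi_{\hat{\mathsf{G}}}: \BBun_{\hat{\mathsf{G}}}(M) \xrightarrow[]{\sim} \MMap(M, \mathbf{B} \hat{\mathsf{G}}), \qquad \Phi_{\mathbb{Z}_2}: \BBun_{\mathbb{Z}_2}(M) \xrightarrow[]{\sim} \MMap(M, \mathbf{B} \mathbb{Z}_2)
\]
sending a bundle to (the homotopy class of) any classifying map. Under $\Phi_{\mathbb{Z}_2}$, the object $\ori_M \in \BBun_{\mathbb{Z}_2}(M)$ corresponds to the chosen classifying map $\ori_M: M \rightarrow \mathbf{B} \mathbb{Z}_2$ fixed in Section \ref{sec:orientations}.

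The key observation is that the induction functor $\Ind_{\pi}$ corresponds, under these equivalences, to post-composition with $\mathbf{B} \pi$. Concretely, if $P \rightarrow M$ is classified by $f_P: M \rightarrow \mathbf{B} \hat{\mathsf{G}}$, then $P \times_{\hat{\mathsf{G}}}^{\pi} \mathbb{Z}_2$ is classified by $\mathbf{B} \pi \circ f_P$ (this follows from the naturality of the classifying space construction, or equivalently, from the fact that $\mathbf{B}\pi$ classifies the double cover $\mathbf{B}\mathsf{G} \rightarrow \mathbf{B}\hat{\mathsf{G}}$). So we obtain a homotopy commutative square of groupoids
\[
\begin{tikzpicture}[baseline= (a).base]
\node[scale=1] (a) at (0,0){
\begin{tikzcd}[column sep=4em, row sep=2em]
\BBun_{\hat{\mathsf{G}}}(M) \arrow{r}[above]{\Phi_{\hat{\mathsf{G}}}}[below]{\sim} \arrow{d}[left]{\Ind_{\pi}} & \MMap(M, \mathbf{B} \hat{\mathsf{G}}) \arrow{d}{\mathbf{B}\pi \circ (-)} \\
\BBun_{\mathbb{Z}_2}(M) \arrow{r}[below]{\sim}[above]{\Phi_{\mathbb{Z}_2}} & \MMap(M, \mathbf{B} \mathbb{Z}_2)
\end{tikzcd}
};
\end{tikzpicture}
\]
in which the horizontal arrows are equivalences.

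Since homotopy fibres are invariant under natural equivalences of diagrams, taking the homotopy fibre over $\ori_M \in \BBun_{\mathbb{Z}_2}(M)$ on the left and over the corresponding $\ori_M: M \rightarrow \mathbf{B} \mathbb{Z}_2$ on the right yields the desired equivalence
\[
\BBun_{\hat{\mathsf{G}}}^{\ori}(M) = R\Ind_{\pi}^{-1}(\ori_M) \;\simeq\; R(\mathbf{B}\pi \circ (-))^{-1}(\ori_M) = \MMap_{\mathbf{B} \mathbb{Z}_2}(M, \mathbf{B} \hat{\mathsf{G}}),
\]
the right-hand equality being the definition from Section \ref{sec:relMapGrpd}.

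I expect the only subtle point to be the construction of the homotopy filling the square, i.e.\ exhibiting a natural isomorphism $\Phi_{\mathbb{Z}_2} \circ \Ind_{\pi} \Rightarrow (\mathbf{B}\pi \circ (-)) \circ \Phi_{\hat{\mathsf{G}}}$ whose components are the canonical identifications between classifying maps of $P \times_{\hat{\mathsf{G}}}^{\pi} \mathbb{Z}_2$ and $\mathbf{B}\pi \circ f_P$. This requires a consistent choice of classifying maps compatible with $\mathbf{B}\pi: \mathbf{B}\hat{\mathsf{G}} \rightarrow \mathbf{B}\mathbb{Z}_2$ modelling the double cover, but this is standard and can be arranged by choosing $\Phi_{\mathbb{Z}_2}(\Ind_{\pi}(P))$ to be $\mathbf{B}\pi \circ \Phi_{\hat{\mathsf{G}}}(P)$ on the nose. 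With this choice the filling $2$-morphism is an identity and the conclusion follows immediately.
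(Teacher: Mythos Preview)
Your proposal is correct and follows essentially the same approach as the paper: both arguments use the classifying-space equivalences $\BBun_{\hat{\mathsf{G}}}(M)\simeq\MMap(M,\mathbf{B}\hat{\mathsf{G}})$ and $\BBun_{\mathbb{Z}_2}(M)\simeq\MMap(M,\mathbf{B}\mathbb{Z}_2)$, observe that these intertwine $\Ind_{\pi}$ with $\mathbf{B}\pi\circ(-)$ up to natural isomorphism, and conclude by invariance of homotopy fibres under equivalences of diagrams. Your write-up is simply a more explicit version of the paper's two-sentence proof.
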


\begin{proof}
As recalled in Section \ref{sec:moduliBundles}, there are equivalences
\[
\BBun_{\hat{\mathsf{G}}}(M) \simeq \MMap(M, \mathbf{B} \hat{\mathsf{G}}),
\qquad
\BBun_{\mathbb{Z}_2}(M) \simeq \MMap(M, \mathbf{B} \mathbb{Z}_2).
\]
Up to natural isomorphism, these equivalences intertwine the functors $\Ind_{\pi}$ and $\mathbf{B} \pi \circ (-)$. The  claimed equivalence now follows from the fact that homotopy limits of equivalent diagrams of groupoids are equivalent.
\end{proof}

\begin{Prop}
\label{prop:redStrDescr}
The groupoid $\BBun_{\hat{\mathsf{G}}}^{\ori}(M)$ is equivalent to the category of principal $\hat{\mathsf{G}}$-bundles $P \rightarrow M$ with a section of $\Ind_{\pi}(P) \otimes_{\mathbb{Z}_2} \ori_M \rightarrow M$ and their section preserving morphisms.
\end{Prop}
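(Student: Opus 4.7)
The proof will go by constructing a functor $\Phi$ from $\BBun_{\hat{\mathsf{G}}}^{\ori}(M)$ to the target groupoid, which is the identity on the underlying bundles, and then verifying that it is an equivalence. The key input is the following basic fact about principal $\mathbb{Z}_2$-bundles over $M$: for any two such bundles $A,B \rightarrow M$, there is a natural bijection between isomorphisms $\varphi:A \xrightarrow[]{\sim} B$ of principal $\mathbb{Z}_2$-bundles and sections of the contracted product $A \otimes_{\mathbb{Z}_2} B = (A \times_M B)/\mathbb{Z}_2$, where $\mathbb{Z}_2$ acts diagonally (note that the two conventions $(a,b)\cdot g = (ag, bg^{-1})$ and $(ag,bg)$ agree for $\mathbb{Z}_2$). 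The bijection sends $\varphi$ to the section $s_\varphi(x) = [(a, \varphi(a))]$ for any choice of $a \in A_x$; this is well-defined precisely because $\varphi$ is $\mathbb{Z}_2$-equivariant, so $(ag, \varphi(a)g)$ lies in the same orbit as $(a, \varphi(a))$.

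Applying this with $A = \Ind_{\pi}(P)$ and $B = \ori_M$, I define the functor $\Phi$ on objects by $\Phi(P, \epsilon) = (P, s_{\epsilon})$. On morphisms, $\Phi$ acts as the identity on the underlying bundle map; the condition $\epsilon^{\prime} \circ \Ind_{\pi}(f) = \epsilon$ translates directly into the statement that $f$ carries $s_\epsilon$ to $s_{\epsilon^{\prime}}$, i.e.\ that the bundle map is section-preserving. Conversely, to define the quasi-inverse $\Psi$, given a section $s$ of $\Ind_{\pi}(P) \otimes_{\mathbb{Z}_2} \ori_M$, I define an isomorphism $\epsilon_s : \Ind_{\pi}(P) \rightarrow \ori_M$ fibrewise by the rule $\epsilon_s(u) = v$ for the unique $v \in (\ori_M)_x$ with $[(u,v)] = s(x)$; the uniqueness of $v$ given $u$ and $s(x)$ follows from the fact that the $\mathbb{Z}_2$-orbit through $(u,v)$ intersects $\{u\} \times (\ori_M)_x$ in a single point. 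Equivariance $\epsilon_s(ug) = \epsilon_s(u)g$ is immediate from the diagonal action, and smoothness is a local calculation using any simultaneous trivialization of the two $\mathbb{Z}_2$-bundles.

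It remains to observe that $\Phi$ and $\Psi$ are mutually inverse, which follows from the defining bijection recalled above, and that both respect morphisms, which is immediate from the fact that the assignments $\epsilon \leftrightarrow s_\epsilon$ are natural in $(P,\epsilon)$. This gives the claimed equivalence of categories. The entire argument is essentially the standard identification of isomorphisms of principal bundles with trivializations of the associated $\underline{\Hom}$-bundle, specialized to $\mathbb{Z}_2$ where the inversion action is trivial so that $\underline{\Hom}(A,B) \cong A \otimes_{\mathbb{Z}_2} B$; there is no real obstacle beyond the bookkeeping of setting up this identification in the present notation.
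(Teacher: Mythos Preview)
Your argument is correct and is essentially the same as the paper's: both identify an orientation framing $\epsilon : \Ind_{\pi}(P) \xrightarrow{\sim} \ori_M$ with a section of $\Ind_{\pi}(P) \otimes_{\mathbb{Z}_2} \ori_M$, and observe that this correspondence is natural in $(P,\epsilon)$. The only cosmetic difference is that the paper phrases the passage from $\epsilon$ to a section by first tensoring with $\id_{\ori_M}$ and then using a chosen identification $\ori_M \otimes_{\mathbb{Z}_2} \ori_M \simeq M \times \mathbb{Z}_2$, whereas you use the graph description $s_{\epsilon}(x) = [(a,\epsilon(a))]$ directly; for $\mathbb{Z}_2$ these are manifestly the same construction.
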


\begin{proof}
The notation $\otimes_{\mathbb{Z}_2}$ indicates the monoidal structure on $\BBun_{\mathbb{Z}_2}(M)$ induced by the abelian group structure of $\mathbb{Z}_2$. After fixing an identification of $\ori_M \otimes_{\mathbb{Z}_2} \ori_M$ with the trivial double cover $M \times \mathbb{Z}_2$, an orientation framing of $P$ can be used to construct a map
\[
\epsilon \otimes_{\mathbb{Z}_2} \ori_M: \Ind_{\pi}(P) \otimes_{\mathbb{Z}_2} \ori_M \xrightarrow[]{\epsilon} \ori_M \otimes_{\mathbb{Z}_2} \ori_M \xrightarrow[]{\sim} M \times \mathbb{Z}_2,
\]
which determines the required section. In this way we obtain a functor from $\BBun_{\hat{\mathsf{G}}}^{\ori}(M)$ to the category $\hat{\mathsf{G}}$-bundles together with a section of $\Ind_{\pi}(P) \otimes_{\mathbb{Z}_2} \ori_M$. Reversing the above construction defines a quasi-inverse.
\end{proof}

Recall that sections of $\Ind_{\pi}(P) \rightarrow M$ are equivalent to reductions of structure group of $P$ from $\hat{\mathsf{G}}$ to $\mathsf{G}$. From this perspective, Proposition \ref{prop:redStrDescr} gives an interpretation of $\BBun_{\hat{\mathsf{G}}}^{\ori}(M)$ as a groupoid of principal $\hat{\mathsf{G}}$-bundles together with an $\ori_M$-twisted reduction of structure group to $\mathsf{G}$. This explains their naming.

We now describe two situations in which $\BBun_{\hat{\mathsf{G}}}^{\ori}(M)$ reduces to a familiar groupoid.

\begin{Prop}
\label{prop:orientableFrBun}
Suppose that $M$ is orientable. The choice of an orientation $\omega_M$ of $M$ induces an equivalence
\[
\varphi^{\omega_M}: 
\BBun_{\mathsf{G}}(M) \xrightarrow[]{\sim} \BBun_{\hat{\mathsf{G}}}^{\ori}(M).
\]
\end{Prop}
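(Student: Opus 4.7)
The plan is to use the orientation $\omega_M$ as a trivialization of the double cover $\ori_M \to M$ (equivalently, a nullhomotopy of the classifying map $\ori_M: M \to \mathbf{B} \mathbb{Z}_2$). Once $\ori_M$ is identified with the trivial double cover $M \times \mathbb{Z}_2$, an orientation framing of a principal $\hat{\mathsf{G}}$-bundle becomes an ordinary trivialization of its induced $\mathbb{Z}_2$-bundle, and such a trivialization is the standard data for a reduction of structure group from $\hat{\mathsf{G}}$ to $\mathsf{G} = \ker \pi$.

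Concretely, I would define $\varphi^{\omega_M}$ on objects by sending a principal $\mathsf{G}$-bundle $P \to M$ to the pair $(i_* P, \epsilon_P^{\omega_M})$, where $i_* P = P \times_{\mathsf{G}} \hat{\mathsf{G}}$ is the induced $\hat{\mathsf{G}}$-bundle and the orientation framing is the composition
\[
\epsilon_P^{\omega_M}: \Ind_{\pi}(i_* P) = P \times_{\mathsf{G}}^{\pi \circ i} \mathbb{Z}_2 \xrightarrow[]{\sim} M \times \mathbb{Z}_2 \xrightarrow[]{\omega_M} \ori_M,
\]
where the first isomorphism uses that $\pi \circ i$ is the trivial homomorphism and the second uses the trivialization of $\ori_M$ provided by $\omega_M$. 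On morphisms, set $\varphi^{\omega_M}(f) = f \times_{\mathsf{G}} \id_{\hat{\mathsf{G}}}$, which manifestly respects the orientation framings by naturality of the associated bundle construction.

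For the quasi-inverse $\psi^{\omega_M}$, I would use Proposition \ref{prop:redStrDescr}: given $(P, \epsilon) \in \BBun_{\hat{\mathsf{G}}}^{\ori}(M)$, the trivialization $\omega_M$ identifies $\Ind_{\pi}(P) \otimes_{\mathbb{Z}_2} \ori_M$ with $\Ind_{\pi}(P)$, so $\epsilon$ encodes a trivialization $\omega_M^{-1} \circ \epsilon: \Ind_{\pi}(P) \xrightarrow[]{\sim} M \times \mathbb{Z}_2$ of the induced $\mathbb{Z}_2$-bundle. Pulling back $M \times \{1\}$ along the canonical map $P \to \Ind_{\pi}(P)$ composed with this trivialization yields a principal $\mathsf{G}$-subbundle $\psi^{\omega_M}(P, \epsilon) \subset P$. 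A morphism $(P, \epsilon) \to (P', \epsilon')$ automatically restricts to these subbundles, giving functoriality. The natural isomorphisms $\psi^{\omega_M} \circ \varphi^{\omega_M} \cong \id$ and $\varphi^{\omega_M} \circ \psi^{\omega_M} \cong \id$ are then the standard identifications $(P \times_{\mathsf{G}} \hat{\mathsf{G}})_{\text{reduction}} \cong P$ and $P_0 \times_{\mathsf{G}} \hat{\mathsf{G}} \cong P$ for principal bundles, which by construction intertwine the orientation framings.

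The main obstacle is not conceptual but organizational: one must consistently track the canonical isomorphisms arising from associated bundle constructions such as $(P \times_{\mathsf{G}} \hat{\mathsf{G}}) \times_{\hat{\mathsf{G}}}^{\pi} \mathbb{Z}_2 \cong P \times_{\mathsf{G}}^{\pi \circ i} \mathbb{Z}_2$ and check that they transport orientation framings coherently. An alternative route that sidesteps most of this bookkeeping is to invoke Proposition \ref{prop:mappingSpaceDescr}: the trivialization $\omega_M$ presents $\ori_M: M \to \mathbf{B} \mathbb{Z}_2$ as the constant map, and the fibration $\mathbf{B} \mathsf{G} \to \mathbf{B} \hat{\mathsf{G}} \to \mathbf{B} \mathbb{Z}_2$ identifies $\MMap_{\mathbf{B} \mathbb{Z}_2}(M, \mathbf{B} \hat{\mathsf{G}})$ with $\MMap(M, \mathbf{B} \mathsf{G}) \simeq \BBun_{\mathsf{G}}(M)$; one then checks that this abstract equivalence coincides with $\varphi^{\omega_M}$ under the identifications of Section \ref{sec:moduliBundles}.
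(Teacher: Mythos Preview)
Your proposal is correct and follows essentially the same approach as the paper: both define $\varphi^{\omega_M}$ by sending a $\mathsf{G}$-bundle $Q$ to the induced $\hat{\mathsf{G}}$-bundle $Q \times_{\mathsf{G}} \hat{\mathsf{G}}$ with the orientation framing obtained from the canonical trivialization of $Q \times_{\mathsf{G}}^{\pi \circ i} \mathbb{Z}_2$ composed with $\omega_M$, and both build the quasi-inverse by using $\omega_M^{-1} \circ \epsilon$ to extract a section of $\Ind_{\pi}(P)$ and hence a reduction of structure group to $\mathsf{G}$. The only cosmetic difference is that the paper packages the construction of $\varphi^{\omega_M}$ via the $2$-universal property of the homotopy fibre rather than writing the functor out directly, and your alternative mapping-space argument is also noted in the paper's remarks following Proposition~\ref{prop:splitFrBun}.
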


\begin{proof}
Interpret the orientation of $M$ as an isomorphism $\omega_M: M \times \mathbb{Z}_2 \xrightarrow[]{\sim} \ori_M$. Then $\omega_M$ determines a homotopy commutative diagram
\[
\begin{tikzpicture}[baseline= (a).base]
\node[scale=1] (a) at (0,0){
\begin{tikzcd}[column sep=5em, row sep=2.5em]
\BBun_{\mathsf{G}}(M) \arrow{r}[above]{\Ind_i} \arrow{d} & \BBun_{\hat{\mathsf{G}}}(M) \arrow{d}[right]{\Ind_{\pi}} \\
\{ \ori_M \} \arrow{r} \arrow[Rightarrow,shorten <= 0.75em, shorten >= 0.75em]{ur}[above]{\eta^{\omega_M}} & \BBun_{\mathbb{Z}_2}(M).
\end{tikzcd}
};
\end{tikzpicture}
\]
The component of $\eta^{\omega_M}$ at $Q \in \BBun_{\mathsf{G}}(M)$ is the composition
\[
\ori_M \xrightarrow[]{\omega_M^{-1}} M \times \mathbb{Z}_2 \simeq Q \times_{\mathsf{G}}^{\pi \circ i} \mathbb{Z}_2 \simeq (Q \times^i_{\mathsf{G}} \hat{\mathsf{G}}) \times_{\hat{\mathsf{G}}}^{\pi} \mathbb{Z}_2,
\]
where the unnamed isomorphisms are canonical. By the $2$-universal property of $\BBun_{\hat{\mathsf{G}}}^{\ori}(M)$, there is an induced functor $\varphi^{\omega_M}: \BBun_{\mathsf{G}}(M) \rightarrow \BBun_{\hat{\mathsf{G}}}^{\ori}(M)$ which is compatible with the above diagram.

A quasi-inverse of $\varphi^{\omega_M}$ is constructed as follows. Let $(P, \epsilon) \in \BBun_{\hat{\mathsf{G}}}^{\ori}(M)$. The orientation framing $\epsilon$ and orientation $\omega_M$ determine a section $s(\epsilon, \omega_M)$ of $\Ind_{\pi}(P) \rightarrow M$ through the composition
\[
\Ind_{\pi}(P) \xrightarrow[]{\epsilon} \ori_M \xrightarrow[]{\omega_M} M \times \mathbb{Z}_2.
\]
The assignment $(P, \epsilon) \mapsto s(\epsilon, \omega_M)^*(P \rightarrow P \slash \mathsf{G})$ extends to a functor $\BBun_{\hat{\mathsf{G}}}^{\ori}(M) \rightarrow \BBun_{\mathsf{G}}(M)$ which is a quasi-inverse of $\varphi^{\omega_M}$.
\end{proof}

\begin{Prop}
\label{prop:splitFrBun}
Suppose that $\hat{\mathsf{G}} = \mathsf{G} \times \mathbb{Z}_2$ with $\pi$ the projection to the second factor. Then there is a canonical equivalence $\BBun_{\hat{\mathsf{G}}}^{\ori}(M) \xrightarrow[]{\sim} \BBun_{\mathsf{G}}(M)$.
\end{Prop}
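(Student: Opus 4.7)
The plan is to exploit the splitting $\hat{\mathsf{G}}=\mathsf{G}\times\mathbb{Z}_2$ to decompose every principal $\hat{\mathsf{G}}$-bundle as a product, reducing the homotopy fibre defining $\BBun_{\hat{\mathsf{G}}}^{\ori}(M)$ to a trivial computation.

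First I would define the candidate quasi-inverse pair. Given $Q\in\BBun_{\mathsf{G}}(M)$, set
\[
\Psi(Q)=(Q\times_M \ori_M,\,\epsilon_Q),
\]
where $Q\times_M\ori_M$ is regarded as a principal $\hat{\mathsf{G}}=\mathsf{G}\times\mathbb{Z}_2$-bundle via the componentwise right action, and $\epsilon_Q$ is the canonical isomorphism
\[
\Ind_{\pi}(Q\times_M\ori_M)=(Q\times_M\ori_M)\times^{\pi}_{\hat{\mathsf{G}}}\mathbb{Z}_2\xrightarrow[]{\sim}\ori_M
\]
obtained from the identity on $\ori_M$ and the fact that $Q\times^{\pi\circ i_1}_{\mathsf{G}}\mathbb{Z}_2$ is canonically trivial (where $i_1\colon\mathsf{G}\hookrightarrow\hat{\mathsf{G}}$). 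In the opposite direction, set $\Phi(P,\epsilon)=P/\mathbb{Z}_2$, the quotient by the normal subgroup $\{e\}\times\mathbb{Z}_2\subset\hat{\mathsf{G}}$; the residual $\mathsf{G}=\hat{\mathsf{G}}/\mathbb{Z}_2$-action makes $P/\mathbb{Z}_2$ into a principal $\mathsf{G}$-bundle on $M$. Both assignments extend to $\mathsf{G}$- resp.\ $\hat{\mathsf{G}}$-equivariant morphisms of bundles in the obvious way.

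Next I would verify that $\Phi$ and $\Psi$ are quasi-inverses. The composition $\Phi\circ\Psi$ is canonically equivalent to the identity: for any principal $\mathsf{G}$-bundle $Q$, the projection $Q\times_M\ori_M\to Q$ identifies $(Q\times_M\ori_M)/\mathbb{Z}_2$ with $Q$. For the other composition, one uses that the product decomposition $\hat{\mathsf{G}}=\mathsf{G}\times\mathbb{Z}_2$ yields a natural isomorphism
\[
P\cong (P/\mathbb{Z}_2)\times_M(P/\mathsf{G})=(P/\mathbb{Z}_2)\times_M\Ind_{\pi}(P)
\]
of principal $\hat{\mathsf{G}}$-bundles. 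Composing with the framing $\epsilon$ gives a canonical isomorphism $P\xrightarrow{\sim}(P/\mathbb{Z}_2)\times_M\ori_M=\Psi\Phi(P,\epsilon)$ which, by construction, intertwines $\epsilon$ with the canonical framing $\epsilon_{P/\mathbb{Z}_2}$, providing the natural isomorphism $\id\Rightarrow\Psi\circ\Phi$.

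The only step requiring any real care is the last one: one must check that the comparison map $P\xrightarrow{\sim}(P/\mathbb{Z}_2)\times_M\ori_M$ really respects framings, i.e.\ that applying $\Ind_{\pi}$ to it recovers $\epsilon$ up to the canonical trivializations used to build $\epsilon_{P/\mathbb{Z}_2}$. This is unwinding the definitions. I expect no genuine obstacle; the statement is a direct consequence of the product structure of $\hat{\mathsf{G}}$. Alternatively, one could short-circuit the argument by invoking Lemma \ref{lem:2Fibre} applied to the map $\pi\circ(-)\colon\Hom_{\Grp}(\pi_1(M),\mathsf{G}\times\mathbb{Z}_2)\to\Hom_{\Grp}(\pi_1(M),\mathbb{Z}_2)$ via the holonomy description of Proposition \ref{prop:holonomyDescr}, but the bundle-theoretic construction above has the advantage of being manifestly canonical and global (no choice of basepoint or connectedness assumption on $M$).
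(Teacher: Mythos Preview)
Your proof is correct and follows essentially the same approach as the paper: both directions you define coincide with those in the paper (the paper phrases your $\Psi$ as the functor induced by the $2$-universal property of the homotopy fibre applied to the map $Q\mapsto (Q,\ori_M)\in\BBun_{\mathsf{G}}(M)\times\BBun_{\mathbb{Z}_2}(M)\simeq\BBun_{\hat{\mathsf{G}}}(M)$, and your $\Phi$ is exactly the paper's ``forget the double-cover component''). Your treatment is slightly more explicit in verifying the quasi-inverse relation, but there is no substantive difference.
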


\begin{proof}
Since $\hat{\mathsf{G}} = \mathsf{G} \times \mathbb{Z}_2$, a $\hat{\mathsf{G}}$-bundle $P$ is the data of a $\mathsf{G}$-bundle $Q \rightarrow M$ and a double cover $E \rightarrow M$.  The assignment $(P, \epsilon) \mapsto Q$ extends to a functor $\BBun_{\hat{\mathsf{G}}}^{\ori}(M) \rightarrow \BBun_{\mathsf{G}}(M)$. On the other hand, the $2$-universal property of $\BBun_{\hat{\mathsf{G}}}^{\ori}(M)$ implies that the commutative diagram
\[
\begin{tikzpicture}[baseline= (a).base]
\node[scale=1] (a) at (0,0){
\begin{tikzcd}[column sep=5em, row sep=2.5em]
\BBun_{\mathsf{G}}(M) \arrow{r}[above]{(\Ind_i, \ori_M)} \arrow{d} \arrow{d} & \BBun_{\hat{\mathsf{G}}}(M) \arrow{d}[right]{\Ind_{\pi}}\\
\{ \ori_M \} \arrow{r} & \BBun_{\mathbb{Z}_2}(M)
\end{tikzcd}
};
\end{tikzpicture}
\]
induces a functor $\BBun_{\mathsf{G}}(M) \rightarrow \BBun_{\hat{\mathsf{G}}}^{\ori}(M)$. It is straightforward to verify that this functor is quasi-inverse to that constructed above.
\end{proof}

\begin{Rems}
\begin{enumerate}[label=(\roman*)]
\item The mapping space interpretation of $\BBun_{\hat{\mathsf{G}}}^{\ori}(M)$ given in Proposition \ref{prop:mappingSpaceDescr} can also be used to prove of Propositions \ref{prop:orientableFrBun} and \ref{prop:splitFrBun}.

\item Orientation twisted principal bundles are examples of the twisted bundles introduced in \cite{maier2012}, where the groupoid $\BBun_{\hat{\mathsf{G}}}^{\ori}(M)$ would be denoted by $\mathcal{A}_{\hat{\mathsf{G}}}(\ori_M \rightarrow M)$. Propositions \ref{prop:holonomyDescr} and \ref{prop:orientableFrBun} generalize to arbitrary twisted bundles \cite[Propositions 3.7 and 3.8]{maier2012}.
\end{enumerate}
\end{Rems}

\begin{Ex}
We use Proposition \ref{prop:holonomyDescr} to give explicit models of groupoids of orientation twisted $\mathsf{G}$-bundles in simple cases. We fix basepoints and orientations where necessary without comment.

\begin{enumerate}[label=(\roman*)]
\item Let $\mathbb{T}^n \simeq (S^1)^n$ be an $n$-dimension torus. There is an equivalence
\[
\BBun_{\hat{\mathsf{G}}}^{\ori}(\mathbb{T}^n) \simeq \Lambda^n B \mathsf{G} \simeq \mathsf{G}^{(n)} \git \mathsf{G},
\]
where $\Lambda^n B \mathsf{G}$ is the $n$-fold loop groupoid of $B \mathsf{G}$ and $\mathsf{G}^{(n)} \subset \mathsf{G}^n$ is the subset of commuting $n$-tuples. Here, and in the examples which follow, the action of $\mathsf{G}$ is by conjugation. 

\item Let $\mathbb{M}$ be the crosscap, that is, the complement of an open disk in the real projective plane $\mathbb{RP}^2$. The double cover $\ori_{\mathbb{M}}$ is a cylinder and the map $\pi_1(\ori_{\mathbb{M}}) \simeq \mathbb{Z} \rightarrow \pi_1(\mathbb{M}) \simeq \mathbb{Z}$ is multiplication by two. It follows that there is an equivalence
\begin{equation}
\label{eq:crosscapGroupoid}
\BBun_{\hat{\mathsf{G}}}^{\ori}(\mathbb{M}) \simeq (\hat{\mathsf{G}} \backslash \mathsf{G}) \git \mathsf{G}.
\end{equation}

\item As $\pi_1(\mathbb{RP}^2) \simeq \mathbb{Z}_2$ and the holonomy representation of $\ori_{\mathbb{RP}^2}$ sends the generator to $-1$, there is an equivalence
\begin{equation}
\label{eq:projPlaneGroupoid}
\BBun^{\ori}_{\hat{\mathsf{G}}}(\mathbb{RP}^2) \simeq \{ \omega \in \hat{\mathsf{G}} \backslash \mathsf{G} \mid \omega^2 =e\} \git \mathsf{G}.
\end{equation}

\item Let $\mathbb{K}$ be the Klein bottle. The double cover $\ori_{\mathbb{K}}$ is the torus $\mathbb{T}^2$. Writing $\pi_1(\mathbb{T}^2)= \langle A, B \vert ABA^{-1}B^{-1} \rangle$ and $\pi_1(\mathbb{K}) \simeq \langle a, b \mid abab^{-1} \rangle$, the covering $\ori_{\mathbb{K}} \rightarrow \mathbb{K}$ induces the homomorphism
\[
\pi_1(\mathbb{T}^2) \rightarrow \pi_1(\mathbb{K}), \qquad A \mapsto a, \;\; B \mapsto b^2.
\]
It follows that there is an equivalence
\begin{equation}
\label{eq:kleinGroupoid}
\BBun^{\ori}_{\hat{\mathsf{G}}}(\mathbb{K}) \simeq \{ (g,\varsigma) \in \mathsf{G} \times (\hat{\mathsf{G}} \backslash \mathsf{G}) \mid \varsigma g^{-1} \varsigma^{-1} = g\} \git \mathsf{G}.
\end{equation}

\item Below we will consider $\mathbb{T}^2$ and $\mathbb{K}$ as comprising the one loop sector of the theory. By parts (i) and (iv), the one loop moduli space
\[
\BBun_{\hat{\mathsf{G}}}^{\ori, 1 \mhyphen \textnormal{loop}} = \BBun_{\hat{\mathsf{G}}}^{\ori} (\mathbb{T}^2) \sqcup \BBun_{\hat{\mathsf{G}}}^{\ori} (\mathbb{K})
\]
is equivalent to $\hat{\mathsf{G}}^{(2)} \git \mathsf{G}$, where $\hat{\mathsf{G}}^{(2)} = \{ (g, \omega) \in \mathsf{G} \times \hat{\mathsf{G}} \mid \omega g^{\pi(\omega)} \omega^{-1}= g\}$. There is a double cover $\hat{\mathsf{G}}^{(2)} \git \mathsf{G} \rightarrow \hat{\mathsf{G}}^{(2)} \git \hat{\mathsf{G}}$, where $\hat{\mathsf{G}}$ acts by Real conjugation and conjugation on $\mathsf{G}$ and $\hat{\mathsf{G}}$, respectively. The groupoid $\hat{\mathsf{G}}^{(2)} \git \hat{\mathsf{G}}$ is equivalent to $\Lambda \Lambda_{\pi}^{\refl} B \hat{\mathsf{G}}$, the loop groupoid of the unoriented loop groupoid of $B \mathsf{G}$, which plays an important role in the Real (categorical) representation theory of $\mathsf{G}$ \cite{mbyoung2018c}.
\end{enumerate}
\end{Ex}

\subsection{Orientation twisted transgression}
\label{sec:unoriTrans}

Let $M$ be a smooth compact manifold. For notational simplicity, we will assume that $M$ has constant dimension. Consider the following correspondence of stacks:
\[
\begin{tikzpicture}[baseline= (a).base]
\node[scale=1] (a) at (0,0){
\begin{tikzcd}[column sep=2.5em, row sep=0.25em]
{} & \BBun_{\hat{\mathsf{G}}}^{\ori}(M) \times M \arrow{dl}[above left]{\ev^{\ori}_{\hat{\mathsf{G}}}} \arrow{dr}[above right]{\pr_1} & \\
B \hat{\mathsf{G}} &{} & \BBun_{\hat{\mathsf{G}}}^{\ori}(M).
\end{tikzcd}
};
\end{tikzpicture}
\]
The map $\ev_{\hat{\mathsf{G}}}^{\ori}$ is defined to be the composition
\[
\ev^{\ori}_{\hat{\mathsf{G}}} : \BBun_{\hat{\mathsf{G}}}^{\ori}(M) \times M \xrightarrow[]{\sim} \MMap_{B \mathbb{Z}_2}(M, B \hat{\mathsf{G}}) \times M  \xrightarrow[]{\ev} B \hat{\mathsf{G}},
\]
where the first map is the isomorphism of Proposition \ref{prop:mappingSpaceDescr}. The map $\pr_1$ is the projection to the first factor and so is proper and representable. Denote by $\pr_2 : \BBun_{\hat{\mathsf{G}}}^{\ori}(M) \times M \rightarrow M$ the projection to the second factor.

\begin{Prop}
\label{prop:pullbackTwist}
The double cover $\pr_2^* \ori_M \rightarrow \BBun_{\hat{\mathsf{G}}}^{\ori}(M) \times M$ is equivalent to the homotopy limit of the diagram
\[
\begin{tikzpicture}[baseline= (a).base]
\node[scale=1] (a) at (0,0){
\begin{tikzcd}
{} & B \mathsf{G} \arrow{d}{B i} \\
\BBun_{\hat{\mathsf{G}}}^{\ori}(M) \times M \arrow{r}[below]{\ev^{\ori}_{\hat{\mathsf{G}}}} &  B \hat{\mathsf{G}}.
\end{tikzcd}
};
\end{tikzpicture}
\]
\end{Prop}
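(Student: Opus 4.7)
The plan is to reduce the claim to a comparison of classifying maps $\BBun_{\hat{\mathsf{G}}}^{\ori}(M) \times M \to \mathbf{B} \mathbb{Z}_2$ for two double covers on the same base. To begin, I would recall that $Bi: B \mathsf{G} \to B \hat{\mathsf{G}}$ is itself the groupoid double cover classified by $B\pi: B\hat{\mathsf{G}} \to B \mathbb{Z}_2$; equivalently, $Bi$ fits into a canonical homotopy pullback square with $B\pi$ and the point inclusion $\{\mathrm{pt}\} \hookrightarrow B\mathbb{Z}_2$ along the bottom. By the pasting law for homotopy pullbacks, the homotopy limit in the statement is therefore equivalent to the homotopy pullback of $\{\mathrm{pt}\} \hookrightarrow B\mathbb{Z}_2$ along $B\pi \circ \ev_{\hat{\mathsf{G}}}^{\ori}$, i.e.\ to the double cover of $\BBun_{\hat{\mathsf{G}}}^{\ori}(M) \times M$ with classifying map $B\pi \circ \ev_{\hat{\mathsf{G}}}^{\ori}$.

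Next I would exhibit a canonical homotopy between $B\pi \circ \ev_{\hat{\mathsf{G}}}^{\ori}$ and $\ori_M \circ \pr_2$. Using the mapping groupoid model provided by Proposition \ref{prop:mappingSpaceDescr}, an object of $\BBun_{\hat{\mathsf{G}}}^{\ori}(M)$ is a pair $(f,h)$ consisting of a classifying map $f: M \to B\hat{\mathsf{G}}$ together with an equivalence class of homotopies $h$ from $B\pi \circ f$ to $\ori_M$, and under this identification $\ev_{\hat{\mathsf{G}}}^{\ori}$ is genuine evaluation. Thus $(B\pi \circ \ev_{\hat{\mathsf{G}}}^{\ori})((f,h),m) = B\pi(f(m))$, while $(\ori_M \circ \pr_2)((f,h),m) = \ori_M(m)$, and the family of homotopies $h$ evaluated at varying $m \in M$ assembles into the required natural homotopy. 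Conceptually, this is precisely the universal $2$-morphism $\eta$ of the defining homotopy fibre diagram for $\MMap_{\mathbf{B}\mathbb{Z}_2}(M, B\hat{\mathsf{G}})$, pulled back to $\BBun_{\hat{\mathsf{G}}}^{\ori}(M) \times M$ along the inclusion of the parameter point.

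Finally, since a principal $\mathbb{Z}_2$-bundle is determined up to equivalence by the homotopy class of its classifying map, and since $\ori_M \circ \pr_2$ classifies $\pr_2^* \ori_M$ by the very definition of pullback covers, the two double covers coincide and the proposition follows. I do not foresee any real obstacle here: the argument is essentially bookkeeping, combining the pasting law for homotopy pullbacks with the defining $2$-universal property of $\BBun_{\hat{\mathsf{G}}}^{\ori}(M) \simeq \MMap_{\mathbf{B}\mathbb{Z}_2}(M, B\hat{\mathsf{G}})$. The only point requiring mild care is to track that the natural homotopy constructed from $h$ agrees, under the above pasting identification, with the coherence data of the homotopy pullback defining $\BBun_{\hat{\mathsf{G}}}^{\ori}(M)$, rather than with its inverse.
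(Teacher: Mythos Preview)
Your proposal is correct and follows essentially the same approach as the paper: both first observe that the homotopy limit in question is the double cover classified by $B\pi \circ \ev_{\hat{\mathsf{G}}}^{\ori}$, and then produce a homotopy from this map to $\ori_M \circ \pr_2$ using the defining $2$-universal data of $\BBun_{\hat{\mathsf{G}}}^{\ori}(M)$. The only cosmetic difference is that the paper packages the second step as a single large diagram built from $\ev_{\hat{\mathsf{G}}}$, $\ev_{\mathbb{Z}_2}$ and $\Ind_{\pi}$, whereas you unpack the mapping-groupoid model of Proposition~\ref{prop:mappingSpaceDescr} and read off the homotopy pointwise from the data $h$; these are two phrasings of the same argument.
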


\begin{proof}
As the double cover $Bi$ is classified by $B\pi: B \hat{\mathsf{G}} \rightarrow B \mathbb{Z}_2$, the homotopy limit under consideration is the double cover classified by $B \pi \circ \ev^{\ori}_{\hat{\mathsf{G}}}$. This composition fits into the following diagram:
\[
\begin{tikzpicture}[baseline= (a).base]
\node[scale=1] (a) at (0,0){
\begin{tikzcd}[column sep=4em, row sep=1.5em]
\BBun_{\hat{\mathsf{G}}}^{\ori}(M) \times M \arrow{rr}[above]{\ev^{\ori}_{\hat{\mathsf{G}}}} \arrow{dd}[left]{\pr_2} \arrow{dr}[below left]{\textnormal{can} \times \id_M} & & B \hat{\mathsf{G}} \arrow{dd}[right]{B\pi} & \\
& \BBun_{\hat{\mathsf{G}}}(M) \times M \arrow{ur}[below right]{\ev_{\hat{\mathsf{G}}}} \arrow{d}[left]{\Ind_{\pi} \times \id_M} & \\
M \arrow{r}[below]{\{\ori_M\} \times \id_M} & \BBun_{\mathbb{Z}_2}(M) \times M \arrow{r}[below]{\ev_{\mathbb{Z}_2}} & B \mathbb{Z}_2.
\end{tikzcd}
};
\end{tikzpicture}
\]
The left hand square homotopy commutes by the definition of $\BBun_{\hat{\mathsf{G}}}^{\ori}(M)$. The upper triangle commutes by definition and the right hand square commutes by inspection. It follows that the outside square homotopy commutes, proving the desired statement.
\end{proof}

Pullback along $\ev^{\ori}_{\hat{\mathsf{G}}}$ gives a map
\[
(\ev^{\ori}_{\hat{\mathsf{G}}})^*: C^{\bullet}(B \hat{\mathsf{G}}; \mathbb{C}^{\times}_{B i}) \rightarrow C^{\bullet}(\BBun_{\hat{\mathsf{G}}}^{\ori}(M) \times M; \mathbb{C}^{\times}_{\ev^{\ori *}_{\hat{\mathsf{G}}} B i}).
\]
The equivalence of Proposition \ref{prop:pullbackTwist} yields an isomorphism of twisted cochain complexes:
\[
\mathfrak{i}:
C^{\bullet}(\BBun_{\hat{\mathsf{G}}}^{\ori}(M) \times M; \mathbb{C}^{\times}_{\ev^{\ori *}_{\hat{\mathsf{G}}} B i}) \xrightarrow[]{\sim} C^{\bullet}(\BBun_{\hat{\mathsf{G}}}^{\ori}(M) \times M; \mathbb{C}^{\times}_{\ori_M}).
\]
As the orientation twist of the map $\pr_1$ is the double cover $\ori_M \rightarrow M$, pushforward along $\pr_1$ is a map
\[
\pr_{1!}: C^{\bullet}(\BBun_{\hat{\mathsf{G}}}^{\ori}(M) \times M; \mathbb{C}^{\times}_{\ori_M}) \rightarrow C^{\bullet - \dim M}(\BBun_{\hat{\mathsf{G}}}^{\ori}(M); \mathbb{C}^{\times}).
\]
These considerations lead to the following definition.

\begin{Def}
The orientation twisted transgression map along $M$, denoted by
\[
\uptau_M^{\ori}: C^{\bullet}(B \hat{\mathsf{G}}; \mathbb{C}^{\times}_{\pi}) \rightarrow C^{\bullet - \dim M}(\BBun_{\hat{\mathsf{G}}}^{\ori}(M); \mathbb{C}^{\times}),
\]
is defined to be the composition $\pr_{1!} \circ \mathfrak{i} \circ (\ev^{\ori}_{\hat{\mathsf{G}}})^*$.
\end{Def}

As the following result shows, $\uptau_{M}^{\ori}$ reduces to the standard transgression map $\uptau_M$ when $M$ is oriented.

\begin{Prop}
\label{prop:oriVsOriTwistTrans}
Suppose that $M$ is oriented. Under the identification $\BBun_{\hat{\mathsf{G}}}^{\ori}(M) \simeq \BBun_{\mathsf{G}}(M)$ of Proposition \ref{prop:orientableFrBun}, the map $\uptau_M^{\ori}$ factors through $\uptau_M$.
\end{Prop}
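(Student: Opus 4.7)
The plan is to reduce the claim to three compatibility statements, one for each factor in the definition $\uptau_M^{\ori} = \pr_{1!} \circ \mathfrak{i} \circ (\ev_{\hat{\mathsf{G}}}^{\ori})^*$, and then assemble them via the equivalence $\varphi^{\omega_M}$ of Proposition \ref{prop:orientableFrBun} to match the ordinary transgression $\uptau_M = \pr_{1!} \circ \ev_{\mathsf{G}}^*$. More precisely, I aim to prove that $(\varphi^{\omega_M})^* \circ \uptau_M^{\ori} = \uptau_M \circ i^*$, which is the desired factorization. The starting point is the homotopy commutative diagram
\[
\begin{tikzcd}[column sep=2.5em]
\BBun_{\mathsf{G}}(M) \times M \arrow{r}{\ev_{\mathsf{G}}} \arrow{d}[left]{\varphi^{\omega_M} \times \id_M} & B \mathsf{G} \arrow{d}{Bi} \\
\BBun_{\hat{\mathsf{G}}}^{\ori}(M) \times M \arrow{r}[below]{\ev_{\hat{\mathsf{G}}}^{\ori}} & B \hat{\mathsf{G}},
\end{tikzcd}
\]
whose commutativity follows directly from the construction of $\varphi^{\omega_M}$, since the image $\varphi^{\omega_M}(Q) = (Q \times^i_{\mathsf{G}} \hat{\mathsf{G}}, \epsilon^{\omega_M})$ is classified by $Bi \circ f_Q$.

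Pulling back $\hat{\lambda} \in Z^n(B \hat{\mathsf{G}}; \mathbb{C}^{\times}_\pi)$ around the two paths of this square then yields the cochain identity $(\varphi^{\omega_M} \times \id_M)^* (\ev_{\hat{\mathsf{G}}}^{\ori})^* \hat{\lambda} = \ev_{\mathsf{G}}^* \lambda$, where $\lambda = i^* \hat{\lambda}$ carries untwisted $\mathbb{C}^{\times}$-coefficients since $\pi \circ i$ is trivial. Next, the twist-trivializing isomorphism $\mathfrak{i}$, obtained from the canonical identification of $\pr_2^* \ori_M$ with $(\ev_{\hat{\mathsf{G}}}^{\ori})^* Bi$ given by Proposition \ref{prop:pullbackTwist}, has to be compared with the pullback. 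Upon pulling back by $\varphi^{\omega_M} \times \id_M$, both double covers on $\BBun_{\mathsf{G}}(M) \times M$ become canonically trivial: the former via the orientation $\omega_M : M \times \mathbb{Z}_2 \xrightarrow[]{\sim} \ori_M$, the latter because $(Bi)^* Bi$ is a trivial double cover of $B \mathsf{G}$. One must verify that $\mathfrak{i}$ intertwines these two trivializations; this is forced by the construction of $\epsilon^{\omega_M}$ in the proof of Proposition \ref{prop:orientableFrBun}, in which the framing is defined precisely through $\omega_M$ and the canonical trivialization of $i^* \Ind_\pi$.

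Finally, pushforward along the proper representable map $\pr_1$ satisfies base change with respect to the Cartesian square induced by $\varphi^{\omega_M}$, so $(\varphi^{\omega_M})^* \circ \pr_{1!} = \pr_{1!} \circ (\varphi^{\omega_M} \times \id_M)^*$. Combining the three compatibilities gives
\[
(\varphi^{\omega_M})^* \uptau_M^{\ori}(\hat{\lambda}) = \pr_{1!} \ev_{\mathsf{G}}^* \lambda = \uptau_M(i^* \hat{\lambda}),
\]
which is the asserted factorization. The main obstacle is the middle step: one must track at the cochain level that the abstract equivalence of double covers from Proposition \ref{prop:pullbackTwist} is the \emph{same} trivialization as the one produced by the orientation $\omega_M$ together with the canonical splitting of $i^* \mathbb{C}^{\times}_\pi$. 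The first and third steps are formal naturality statements, but this compatibility requires unwinding the homotopy witnessing the commutativity of $\Ind_\pi \circ \Ind_i \simeq \ori_M$ in the definition of $\varphi^{\omega_M}$.
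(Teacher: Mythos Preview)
Your proposal is correct and follows essentially the same approach as the paper: both arguments rest on the homotopy commutative diagram relating $\ev_{\mathsf{G}}$ and $\ev_{\hat{\mathsf{G}}}^{\ori}$ via $\varphi^{\omega_M} \times \id_M$ and $Bi$, together with the compatibility of $\pr_1$ on the two sides. You are in fact more careful than the paper about the middle step involving $\mathfrak{i}$; the paper leaves the compatibility of the coefficient trivializations implicit in the diagram, whereas you correctly identify that this requires unwinding the construction of the framing $\epsilon^{\omega_M}$ from Proposition~\ref{prop:orientableFrBun}.
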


\begin{proof}
The equivalence $\varphi^{\omega_M}$ of Proposition \ref{prop:orientableFrBun} fits into the following homotopy commutative diagram:
\[
\begin{tikzpicture}[baseline= (a).base]
\node[scale=1] (a) at (0,0){\begin{tikzcd}[column sep=1.5em, row sep=1.25em]
{} & \BBun_{\hat{\mathsf{G}}}^{\ori}(M) \times M \arrow{dl}[above left]{\ev^{\ori}_{\hat{\mathsf{G}}}} \arrow{dr}[above right]{\pr_1} & \\
B \hat{\mathsf{G}} &{} & \BBun_{\hat{\mathsf{G}}}^{\ori}(M) \\
B \mathsf{G} \arrow{u}[left]{B \pi} & {} & \BBun_{\mathsf{G}}(M) \arrow{u}[right]{\varphi^{\omega_M}} \\
{} & \BBun_{\mathsf{G}}(M)  \times M. \arrow{ul}[below left]{\ev_{\mathsf{G}}} \arrow{ur}[below right]{\pr_1} \arrow{uuu}[right]{\varphi^{\omega_M} \times \id_M } & {}
\end{tikzcd}
};
\end{tikzpicture}
\]
The map $\pr_{1 !} \circ \ev_{\mathsf{G}}^* : C^{\bullet}(B \mathsf{G}; \mathbb{C}^{\times}) \rightarrow C^{\bullet - \dim M}(\BBun_{\mathsf{G}}(M); \mathbb{C}^{\times})$ is by definition the standard (oriented) transgression map $\uptau_M$ along $M$, as defined in \cite{willerton2008}, for example. The proposition follows.
\end{proof}

Because the boundary $\partial M$ is not assumed to be empty, the transgressed cochain $\uptau^{\ori}_M(\hat{\lambda})$ need not be closed, even if $\hat{\lambda}$ is so. A precise statement is as follows.

\begin{Prop}
\label{prop:diffTransgression}
Let $\hat{\lambda} \in Z^n(B \hat{\mathsf{G}}; \mathbb{C}_{\pi}^{\times})$ and let $M$ be a manifold with boundary $j: \partial M \hookrightarrow M$. Then the equality
\[
d \uptau_M^{\ori}(\hat{\lambda}) = -j^*\uptau_{\partial M}^{\ori}(\hat{\lambda})
\]
holds. In particular, if $M$ is closed, then $\uptau^{\ori}_M$ restricts to a map
\[
\uptau_M^{\ori}: Z^{\bullet}(B \hat{\mathsf{G}} ; \mathbb{C}^{\times}_{\pi}) \rightarrow Z^{\bullet - \dim M}(\BBun_{\hat{\mathsf{G}}}^{\ori}(M); \mathbb{C}^{\times}).
\]
\end{Prop}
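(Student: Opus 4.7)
The plan is to reduce the claim to a cochain-level Stokes identity together with a base-change compatibility of evaluation maps. Since the pullback $(\ev^{\ori}_{\hat{\mathsf{G}}})^*$ and the coefficient identification $\mathfrak{i}$ of Proposition~\ref{prop:pullbackTwist} are cochain maps, they commute with $d$, and hence
$$\alpha := \mathfrak{i}\bigl((\ev^{\ori}_{\hat{\mathsf{G}}})^* \hat{\lambda}\bigr) \in Z^n\bigl(\BBun^{\ori}_{\hat{\mathsf{G}}}(M) \times M;\, \mathbb{C}^{\times}_{\pr_2^*\ori_M}\bigr)$$
is a cocycle. Since $\uptau_M^{\ori}(\hat{\lambda}) = \pr_{1!}(\alpha)$, the task reduces to computing $d\,\pr_{1!}(\alpha)$.

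I would then realize $\pr_{1!}$ at the cochain level via the slant product with a relative fundamental chain $[M] \in C_{\dim M}(M;\mathbb{Z}_{\ori_M})$ whose boundary is a fundamental cycle $j_*[\partial M]$ of the boundary, the coefficient-level identification between $j^*\ori_M$ and $\ori_{\partial M}$ being fixed by the outward-normal convention for induced orientations. The Leibniz rule
$$\partial(c \times [M]) = \partial c \times [M] + (-1)^{|c|} c \times j_*[\partial M],$$
combined with closedness of $\alpha$, produces the Stokes identity
$$d \pr_{1!}(\alpha) = - \pr_{1!}^{\partial M}\bigl((\id \times j)^* \alpha\bigr),$$
where $\pr_{1!}^{\partial M}$ denotes fiber integration along $\BBun^{\ori}_{\hat{\mathsf{G}}}(M) \times \partial M \to \BBun^{\ori}_{\hat{\mathsf{G}}}(M)$. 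The right-hand side is then identified with $j^* \uptau^{\ori}_{\partial M}(\hat{\lambda})$, where $j^*$ now denotes pullback along the restriction functor $\BBun^{\ori}_{\hat{\mathsf{G}}}(M) \to \BBun^{\ori}_{\hat{\mathsf{G}}}(\partial M)$ of Proposition~\ref{prop:restrFunctor}. This last identification uses the commutativity
$$\ev^{\ori,M}_{\hat{\mathsf{G}}} \circ (\id \times j) = \ev^{\ori,\partial M}_{\hat{\mathsf{G}}} \circ (j^* \times \id_{\partial M}),$$
proper base change along the resulting Cartesian square, and the naturality of $\mathfrak{i}$ in the manifold argument. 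The closed-manifold statement is then the case $\partial M = \varnothing$.

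The main obstacle is the coherent handling of orientation twists at the boundary: $j^*\ori_M$ is only canonically isomorphic, not equal, to $\ori_{\partial M}$, and this canonical isomorphism (supplied by the outward normal) determines the induced orientation used for $[\partial M]$. Tracking this isomorphism through $\mathfrak{i}$ and the base-change identification, and aligning the sign conventions implicit in the definitions of $\pr_{1!}$ and the transgression map, is what produces the precise sign appearing in the formula; everything else is routine.
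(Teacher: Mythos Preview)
Your proposal is correct and follows essentially the same approach as the paper: both arguments reduce to the Leibniz/boundary identity for $\Delta_d \times M$ (equivalently the slant product with a relative fundamental chain), invoke an unoriented Stokes identity, and use that $\hat{\lambda}$ is closed. The only difference is packaging: the paper evaluates directly on a test chain $\Delta_d$ using integration notation, whereas you phrase the same steps in terms of $\pr_{1!}$, base change, and the coefficient identification $\mathfrak{i}$; your explicit attention to the boundary orientation-twist identification is a point the paper leaves implicit in its citation of the unoriented Stokes theorem.
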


\begin{proof}
Let $d = n- \dim M$ and let $\Delta_d$ be a $d$-chain on $\BBun_{\hat{\mathsf{G}}}^{\ori}(M)$. We compute
\begin{eqnarray*}
j^* \uptau^{\ori}_{\partial M} (\hat{\lambda})(\Delta_d) 
&=& \int_{\Delta_d \times \partial M} \ev_{\hat{\mathsf{G}}}^{\ori *} \hat{\lambda} \\
&=&
\int_{\ev_{\hat{\mathsf{G}}}^{\ori}(\Delta_d \times \partial M)} \hat{\lambda} \\
&=&
\int_{\ev_{\hat{\mathsf{G}}}^{\ori}( \partial (\Delta_d \times M))} \hat{\lambda}
-
\int_{\ev_{\hat{\mathsf{G}}}^{\ori}(\partial \Delta_d \times M)} \hat{\lambda} \\
&=&
\int_{\ev_{\hat{\mathsf{G}}}^{\ori} (\Delta_d \times M)} d \hat{\lambda} -\uptau^{\ori}_M(\hat{\lambda})(\partial \Delta_d)
\\
&=& -(d \uptau^{\ori}_M(\hat{\lambda}))(\Delta_d).
\end{eqnarray*}
The penultimate equality follows from the unoriented form of Stokes' Theorem, as in \cite[Theorem 7.2.15]{abraham1988}. The final equality follows from the assumption that $\hat{\lambda}$ is a twisted cocycle. This completes the proof.
\end{proof}

\subsection{Orientation twisted equivariant topological field theories}
\label{sec:oriTwEqTQFT}

For ease of notation, write $\hat{\mathsf{G}} \mhyphen \Cob^{\pi}_{\langle n, n-1, n-2 \rangle}$ in place of $\mathbf{B} \hat{\mathsf{G}} \mhyphen \Cob^{\mathbf{B} \pi}_{\langle n, n-1,n-2 \rangle}$. As follows from Proposition \ref{prop:mappingSpaceDescr}, objects of $\hat{\mathsf{G}} \mhyphen \Cob^{\pi}_{\langle n, n-1, n-2 \rangle}$ are triples $(X, P; \epsilon)$, with $X$ a closed $(n-2)$-manifold and $(P,\epsilon)$ an orientation twisted $\mathsf{G}$-bundle on $X$, and similarly for $1$- and $2$-morphisms. For this reason (see also the comments after Proposition \ref{prop:redStrDescr}), we regard $ \hat{\mathsf{G}} \mhyphen \Cob^{\pi}_{\langle n, n-1, n-2 \rangle}$ as a $\mathsf{G}$-equivariant, as opposed to $\hat{\mathsf{G}}$-equivariant, cobordism category

\begin{Def}
An orientation twisted $\mathsf{G}$-equivariant topological field theory is an orientation twisted homotopy field theory $\mathcal{Z}: \hat{\mathsf{G}} \mhyphen \Cob^{\pi}_{\langle n, n-1, n-2 \rangle} \rightarrow \mathcal{C}$.
\end{Def}

If we restrict attention to non-extended (and pointed) theories, then we recover the equivariant unoriented topological field theories of \cite{tagami2012}, \cite{kapustin2017}.

We record the following immediate corollary of Theorem \ref{thm:unoriPrim}.

\begin{Cor}
\label{cor:primEquivTheory}
The data of a finite $\mathbb{Z}_2$-graded group $\hat{\mathsf{G}}$ and a twisted $n$-cocycle $\hat{\lambda} \in Z^n(B \hat{\mathsf{G}} ; \mathsf{U}(1)_{\pi})$ defines an unoriented lift $\mathcal{P}_{\hat{\mathsf{G}}}^{\hat{\lambda}}$ of $\mathcal{P}_{\mathsf{G}}^{\lambda}$.
\end{Cor}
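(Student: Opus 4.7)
The plan is to specialize Theorem \ref{thm:unoriPrim} to the case of classifying spaces of finite groups. I will set $\hat{T} = \mathbf{B}\hat{\mathsf{G}}$ and $\Pi = \mathbf{B}\pi \colon \mathbf{B}\hat{\mathsf{G}} \to \mathbf{B}\mathbb{Z}_2$, the map induced by the grading homomorphism $\pi \colon \hat{\mathsf{G}} \to \mathbb{Z}_2$. As noted in Section 3, the associated double cover of $\mathbf{B}\hat{\mathsf{G}}$ is then $\mathbf{B} i \colon \mathbf{B}\mathsf{G} \to \mathbf{B}\hat{\mathsf{G}}$, so in this specialization $T = \mathbf{B}\mathsf{G}$.

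Next, I would pass from cocycles on the classifying groupoid $B\hat{\mathsf{G}}$ to cocycles on the classifying space $\mathbf{B}\hat{\mathsf{G}}$ via the standard identification of the twisted cochain complex $C^\bullet(B\hat{\mathsf{G}};\mathsf{U}(1)_\pi)$ with $C^\bullet(\mathbf{B}\hat{\mathsf{G}};\mathsf{U}(1)_{\mathbf{B}\pi})$, so that $\hat{\lambda}$ is equivalently a twisted $n$-cocycle in the sense required as input to Theorem \ref{thm:unoriPrim}. Its pullback along $\mathbf{B} i$ recovers the untwisted cocycle $\lambda \in Z^n(\mathbf{B}\mathsf{G};\mathsf{U}(1))$.

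I would then define $\mathcal{P}_{\hat{\mathsf{G}}}^{\hat{\lambda}} := \mathcal{P}_{\mathbf{B}\hat{\mathsf{G}}}^{\hat{\lambda}}$, viewed as a pseudofunctor out of $\hat{\mathsf{G}} \mhyphen \Cob^{\pi}_{\langle n,n-1,n-2 \rangle} = \mathbf{B}\hat{\mathsf{G}} \mhyphen \Cob^{\mathbf{B}\pi}_{\langle n,n-1,n-2 \rangle}$. By Theorem \ref{thm:unoriPrim}, this is an invertible orientation twisted homotopy field theory whose oriented underlying theory is $\mathcal{P}_{\mathbf{B}\mathsf{G}}^{\lambda} = \mathcal{P}_{\mathsf{G}}^{\lambda}$, which is precisely the statement that $\mathcal{P}_{\hat{\mathsf{G}}}^{\hat{\lambda}}$ is an orientation twisted lift of $\mathcal{P}_{\mathsf{G}}^{\lambda}$. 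Since the definition of an orientation twisted $\mathsf{G}$-equivariant theory in Section \ref{sec:oriTwEqTQFT} is by definition an orientation twisted homotopy field theory with target $\mathbf{B}\hat{\mathsf{G}}$, no further verification is required.

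There is no substantial obstacle: the corollary is a direct specialization of Theorem \ref{thm:unoriPrim}, and the only point requiring any care is the (well-known) identification of twisted groupoid cochains on $B\hat{\mathsf{G}}$ with twisted singular cochains on $\mathbf{B}\hat{\mathsf{G}}$, together with the compatibility of pullback under $\mathbf{B} i$ with the passage from $\hat{\lambda}$ to $\lambda$.
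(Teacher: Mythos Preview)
Your proposal is correct and matches the paper's treatment exactly: the paper gives no explicit proof, simply recording the statement as an immediate corollary of Theorem~\ref{thm:unoriPrim} obtained by specializing to $\hat{T}=\mathbf{B}\hat{\mathsf{G}}$ and $\Pi=\mathbf{B}\pi$. Your care about identifying twisted groupoid cochains on $B\hat{\mathsf{G}}$ with twisted singular cochains on $\mathbf{B}\hat{\mathsf{G}}$ is appropriate but is indeed the only point needing mention, and the paper leaves it implicit.
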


\begin{Rem}
There is a $\hat{\mathsf{G}}$-equivariant isomorphism $\mathbb{C}^{\times}_{\pi} \simeq \mathsf{U}(1)_{\pi} \times \mathbb{R}$ which we use to identify $H^{\bullet}(B \hat{\mathsf{G}} ; \mathbb{C}^{\times}_{\pi})$ with $H^{\bullet}(B \hat{\mathsf{G}} ; \mathsf{U}(1)_{\pi})$. We will therefore restrict attention to unitary cocycles in what follows.
\end{Rem}

\subsection{Orientation twisted orbifolding}
\label{sec:orbifolding}

The orbifold construction is a well-known physical procedure for passing from an equivariant to a non-equivariant field theory. In the setting of oriented topological field theory, the orbifold construction has been studied by many authors; see \cite{kaufmann2003} and \cite{kirillov2004} for early algebraic discussions in dimensions two and three, respectively. In this section we adapt the functorial formulation of Schweigert--Woike \cite{schweigert2019}, \cite{schweigert2018} to the orientation twisted setting.

Let $\hat{\phi}: \hat{\mathsf{G}} \rightarrow \hat{\mathsf{H}}$ be a morphism of finite $\mathbb{Z}_2$-graded groups. Let
\[
\mathcal{Z} : \hat{\mathsf{G}} \mhyphen \Cob^{\pi_{\hat{\mathsf{G}}}}_{\langle n, n-1, n-2 \rangle} \rightarrow 2\Vect_{\mathbb{C}}
\]
be an orientation twisted $\mathsf{G}$-equivariant topological field theory. The primary goal of this section is to modify the oriented pushforward construction of \cite[\S 3.3]{schweigert2019}, \cite[\S 3.1]{schweigert2018} so as to obtain from $\mathcal{Z}$ an orientation twisted $\hat{\mathsf{H}}$-equivariant topological field theory
\[
\mathcal{Z}^{\hat{\phi}} : \hat{\mathsf{H}} \mhyphen \Cob^{\pi_{\hat{\mathsf{H}}}}_{\langle n, n-1, n-2\rangle} \rightarrow 2\Vect_{\mathbb{C}}(\Grpd).
\]
To begin, suppose that we are given an object $(X,f; h) \in \hat{\mathsf{H}} \mhyphen \Cob^{\pi_{\hat{\mathsf{H}}}}_{\langle n, n-1, n-2\rangle}$. Define $ \mathcal{Z}^{\hat{\phi}}(X,f; h) \in 2\Vect_{\mathbb{C}}(\Grpd)$ to be the pseudofunctor
\[
\mathcal{R}_{\mathcal{Z}}^{X, \hat{\phi}}: R \Ind^{-1}_{\hat{\phi}}(X,f; h) \rightarrow \BBun_{\hat{\mathsf{G}}}^{\ori}(X) \xrightarrow[]{\mathcal{R}^X_{\mathcal{Z}}} 2\Vect_{\mathbb{C}}.
\]
The first functor is canonical and $\mathcal{R}_{\mathcal{Z}}^X$ is the pseudofunctor defined in Proposition \ref{prop:vbMappingSpace}. More precisely, we have used Lemma \ref{lem:asphericalMappingGrpd} to identify\footnote{More precisely, we have implicitly chosen a quasi-inverse
\[
\overline{(-)}: \MMap_{\mathbf{B} \mathbb{Z}_2}(X, \mathbf{B} \hat{\mathsf{G}}) \rightarrow \MMap^{\leq 2}_{\mathbf{B} \mathbb{Z}_2}(X, \mathbf{B} \hat{\mathsf{G}})
\]
of the canonical map $\MMap^{\leq 2}_{\mathbf{B} \mathbb{Z}_2}(X, \mathbf{B} \hat{\mathsf{G}}) \rightarrow
\MMap_{\mathbf{B} \mathbb{Z}_2}(X, \mathbf{B} \hat{\mathsf{G}})$ by lifting each object $(f;h) \in \MMap_{\mathbf{B} \mathbb{Z}_2}(X, \mathbf{B} \hat{\mathsf{G}})$ to $(f; \overline{h}) \in \MMap^{\leq 2}_{\mathbf{B} \mathbb{Z}_2}(X, \mathbf{B} \hat{\mathsf{G}})$, where $\overline{h}$ is a representative of the equivalence class $h$, and similarly for morphisms. We will identify $\overline{h}$ with $h$ in what follows.} the domain $\MMap^{\leq 2}_{\mathbf{B} \mathbb{Z}_2}(X, \mathbf{B}\hat{\mathsf{G}})$ of $\mathcal{R}_{\mathcal{Z}}^X$ with the groupoid $\BBun_{\hat{\mathsf{G}}}^{\ori}(X)$. To a $1$-morphism
\[
((Y,o_{\bullet}),F; H): (X_1,f_1; h_1) \rightarrow (X_2,f_2; h_2)
\]
in $\hat{\mathsf{H}} \mhyphen \Cob^{\pi_{\hat{\mathsf{H}}}}_{\langle n, n-1, n-2\rangle}$ the pseudofunctor $\mathcal{Z}^{\hat{\phi}}$ assigns a $1$-morphism whose underlying span of groupoids is
\[
\begin{tikzpicture}[baseline= (a).base]
\node[scale=1] (a) at (0,0){
\begin{tikzcd}[column sep=1.5em, row sep=0.5em]
{} & R \Ind^{-1}_{\hat{\phi}}(Y,F; H) \arrow{dl}[above]{s} \arrow{dr}[above]{t} & \\
R \Ind^{-1}_{\hat{\phi}}(X_1,f_1; h_1) && R \Ind^{-1}_{\hat{\phi}}(X_2,f_2; h_2).
\end{tikzcd}
};
\end{tikzpicture}
\]
To define $s$ and $t$, observe that there is a homotopy commutative diagram
\[
\begin{tikzpicture}[baseline= (a).base]
\node[scale=1] (a) at (0,0){
\begin{tikzcd}[column sep=5.0em, row sep=0.75em]
R\Ind_{\hat{\phi}}^{-1}(Y,F;H) \arrow{r} \arrow{d} & \BBun_{\hat{\mathsf{G}}}^{\ori}(Y) \arrow{r}[above]{(i_k,o_k)^*} & \BBun_{\hat{\mathsf{G}}}^{\ori}(X_k) \arrow{dd}[right]{\Ind_{\hat{\phi}}}\\
\{ (Y,F;H)\} \arrow{d} & R \Ind_{\hat{\phi}}^{-1}(X_k,f_k; h_k) \arrow{dl} \arrow{ur} & {} \\
\{ (X_k, f_k; h_k) \} \arrow{rr} &{}& \BBun_{\hat{\mathsf{H}}}^{\ori}(X_k).
\end{tikzcd}
};
\end{tikzpicture}
\]
The $2$-universal property of $R \Ind_{\hat{\phi}}^{-1}(X_k,f_k; h_k)$ then determines $s$ and $t$. The required pseudonatural transformation
\[
\mathcal{R}_{\mathcal{Z}}^{Y, \hat{\phi}}: s^*\mathcal{R}_{\mathcal{Z}}^{X_1, \hat{\phi}} \rightarrow t^*\mathcal{R}_{\mathcal{Z}}^{X_2, \hat{\phi}}
\]
is defined as follows. Let $((\widetilde{F}; \widetilde{H});\tilde{\epsilon}) \in R \Ind^{-1}_{\hat{\phi}}(Y,F; H)$, that is, $(\widetilde{F}; \widetilde{H}) \in \BBun_{\hat{\mathsf{G}}}^{\ori}(Y)$ and $\tilde{\epsilon} : R\Ind_{\hat{\phi}}(\widetilde{F}; \widetilde{H}) \rightarrow (F; H)$. Put
\[
\mathcal{R}_{\mathcal{Z}}^{Y, \hat{\phi}}((\widetilde{F}; \widetilde{H}); \tilde{\epsilon})
=
\mathcal{Z}(Y, \widetilde{F}; \widetilde{H}),
\]
where we have implicitly used $\tilde{\epsilon}$ to identify $\widetilde{F}_{\vert X_k}$ and $\widetilde{H}_{\vert X_k}$ with $f_k$ and $h_k$, respectively. A morphism $((\widetilde{F}_1; \widetilde{H}_1); \tilde{\epsilon}_1) \rightarrow ((\widetilde{F}_2; \widetilde{H}_2); \tilde{\epsilon}_2)$ in $R \Ind^{-1}_{\hat{\phi}}(Y,F; H)$ is an equivalence class of homotopies $\eta : Y \times I \rightarrow \mathbf{B}\hat{\mathsf{G}}$ from $\widetilde{F}_1$ to $\widetilde{F}_2$ which respects the orientation framings. Completely analogously to \cite[Theorem 3.1]{schweigert2018}, a representative of the homotopy $\eta$ can be used to construct the coherence $2$-isomorphisms of $\mathcal{R}_{\mathcal{Z}}^{Y, \hat{\phi}}$.

It remains to define the value of $\mathcal{Z}^{\hat{\phi}}$ on a $2$-morphism
\[
\begin{tikzcd}[column sep=15em]
(X_1, f_1; h_1)
  \arrow[bend left=12]{r}[name=U,below]{}{((Y_1; o_{1,\bullet}),F_1;H_1)} 
  \arrow[bend right=12]{r}[name=D]{}[swap]{((Y_2; o_{2,\bullet}),F_2;H_2)}
& 
(X_2, f_2; h_2).
\arrow[shorten <=3pt,shorten >=3pt,Rightarrow,to path={(U) -- node[label=left:{\scriptsize $((Z;\sigma_{\bullet}), \varphi; \eta)$}] {} (D)}]{}
\end{tikzcd}
\]
The underlying span of spans of groupoids is
\[
\begin{tikzpicture}[baseline= (a).base]
\node[scale=1] (a) at (0,0){
\begin{tikzcd}[column sep=3.0em, row sep=1.0em]
& R\Ind_{\hat{\phi}}^{-1}(Y_1, F_1; H_1) \arrow{rd}[above]{t_1} \arrow{ld}[above]{s_1} & \\
R\Ind_{\hat{\phi}}^{-1}(X_1, f_1; h_1) & R\Ind_{\hat{\phi}}^{-1}(Z, \varphi; \eta) \arrow{u}[left]{\sigma} \arrow{d}[left]{\tau} & R\Ind_{\hat{\phi}}^{-1}(X_2, f_2; h_2) \\
& R\Ind_{\hat{\phi}}^{-1}(Y_2, F_2; H_2), \arrow{ru}[below]{t_2} \arrow{lu}[below]{s_2} &
\end{tikzcd}
};
\end{tikzpicture}
\]
the functors $\sigma$ and $\tau$ being constructed in the same way as $s$ and $t$ above. The component of the map of intertwiners
\[
\hat{\mathcal{Z}}^{\hat{\phi}}((Z;\sigma_{\bullet}), \varphi; \eta):\sigma^*\mathcal{R}_{\mathcal{Z}}^{Y_1, \hat{\phi}} \Rightarrow \tau^*\mathcal{R}_{\mathcal{Z}}^{Y_2, \hat{\phi}}
\]
at $((Z;\sigma_{\bullet}), \varphi;\eta) \in R\Ind_{\hat{\phi}}^{-1}(Z, \varphi; \eta)$ is defined to be $\mathcal{Z}((Z;\sigma_{\bullet}), \varphi;\eta)$.

\begin{Thm}
\label{thm:unoriOrbifold}
The above construction defines an orientation twisted $\mathsf{H}$-equivariant topological field theory $\mathcal{Z}^{\hat{\phi}} : \hat{\mathsf{H}} \mhyphen \Cob^{\pi_{\hat{\mathsf{H}}}}_{\langle n, n-1, n-2\rangle} \rightarrow 2\Vect_{\mathbb{C}}(\Grpd)$.
\end{Thm}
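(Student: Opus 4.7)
The plan is to verify in turn that the assignments on objects, $1$-morphisms and $2$-morphisms defined above are compatible with the unit and composition structures of $\hat{\mathsf{H}} \mhyphen \Cob^{\pi_{\hat{\mathsf{H}}}}_{\langle n,n-1,n-2\rangle}$, that the resulting pseudofunctor is symmetric monoidal under disjoint union, and finally that the homotopy invariance axiom holds. The construction parallels the oriented orbifolding of \cite{schweigert2019}, \cite{schweigert2018}, so at each step we will indicate only the modifications forced by the orientation twisting; the routine bookkeeping is identical to the oriented case.

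First I would check that the assignments are well defined. On objects this is immediate from Proposition \ref{prop:vbMappingSpace}, applied after identifying $R\Ind_{\hat{\phi}}^{-1}(X,f;h)$ with a full subgroupoid of $\BBun_{\hat{\mathsf{G}}}^{\ori}(X)$ via Proposition \ref{prop:mappingSpaceDescr} and Lemma \ref{lem:asphericalMappingGrpd}. For $1$-morphisms, one must check that $\mathcal{R}^{Y,\hat{\phi}}_{\mathcal{Z}}$ is indeed a pseudonatural transformation $s^*\mathcal{R}^{X_1,\hat{\phi}}_{\mathcal{Z}} \Rightarrow t^*\mathcal{R}^{X_2,\hat{\phi}}_{\mathcal{Z}}$; the coherence $2$-isomorphisms can be constructed, as in \cite[Theorem 3.1]{schweigert2018}, from a representative of the homotopy class realising a morphism of $R\Ind_{\hat{\phi}}^{-1}(Y,F;H)$, and the orientation framings enter only to identify the restrictions of $\widetilde{F}$ and $\widetilde{H}$ to the boundary with the prescribed data. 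For $2$-morphisms, independence of the choice of representative of $\varphi$ and $\eta$ follows from the homotopy invariance of $\mathcal{Z}$.

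Next I would establish pseudofunctoriality. The key technical input is that, for composable $1$-morphisms $Y_1:(X_1,f_1;h_1)\to(X_2,f_2;h_2)$ and $Y_2:(X_2,f_2;h_2)\to(X_3,f_3;h_3)$, the canonical comparison
\[
R\Ind_{\hat{\phi}}^{-1}(Y_1)\times^h_{R\Ind_{\hat{\phi}}^{-1}(X_2,f_2;h_2)}R\Ind_{\hat{\phi}}^{-1}(Y_2)\xrightarrow{\sim}R\Ind_{\hat{\phi}}^{-1}(Y_2\circ_{X_2} Y_1)
\]
is an equivalence of groupoids over $R\Ind_{\hat{\phi}}^{-1}(X_1)\times R\Ind_{\hat{\phi}}^{-1}(X_3)$. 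This is a consequence of the fact that relative mapping groupoids into $\mathbf{B}\hat{\mathsf{G}}$ send homotopy pushouts of manifolds to homotopy pullbacks, together with the compatibility of $R\Ind_{\hat{\phi}}^{-1}$ with such pullbacks. The associated compositor is then inherited from the natural glueing $2$-isomorphism for $\mathcal{R}^X_{\mathcal{Z}}$ built from the Glueing Lemma used in the proof of Theorem \ref{thm:unoriPrim}. The unitor is similarly obtained from $\mathcal{Z}$ applied to the cylinder. Compatibility with vertical and horizontal composition of $2$-morphisms is then verified by unwinding the definitions; again the orientation framings carry no additional content beyond the identifications already used in the $1$-morphism case. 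Symmetric monoidality is straightforward: disjoint unions of manifolds go to products of groupoids and $R\Ind_{\hat{\phi}}^{-1}$ preserves products, while $\mathcal{Z}$ is symmetric monoidal, so the induced $2$-vector bundle on a disjoint union is the Deligne product of its factors.

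Finally, homotopy invariance of $\mathcal{Z}^{\hat{\phi}}$ reduces to that of $\mathcal{Z}$: if $\kappa:Z\times I\to\mathbf{B}\hat{\mathsf{H}}$ is a homotopy relative $\partial Z$ from $\varphi$ to $\varphi'$ with $\eta'*(\mathbf{B}\pi_{\hat{\mathsf{H}}}\circ\kappa)\simeq\eta$, then each object of $R\Ind_{\hat{\phi}}^{-1}(Z,\varphi;\eta)$ can be identified with a corresponding object of $R\Ind_{\hat{\phi}}^{-1}(Z,\varphi';\eta')$ via pullback of $\kappa$, and on the nose $\mathcal{Z}$ takes the resulting $\hat{\mathsf{G}}$-equivariant data to the same $2$-morphism. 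The main obstacle in the entire argument is the compatibility of the homotopy fibres $R\Ind_{\hat{\phi}}^{-1}(-)$ with the glueing of cobordisms, since the twisting by $\ori$ means one cannot simply transport the oriented proof verbatim; however, because the orientation framings are part of the data of objects in $\BBun^{\ori}_{\hat{\mathsf{G}}}$ and are preserved by restriction (Proposition \ref{prop:restrFunctor}), the glueing axiom propagates from $\BBun_{\hat{\mathsf{G}}}^{\ori}$ to $R\Ind_{\hat{\phi}}^{-1}$ without change, and the verification is reduced to the already established oriented case.
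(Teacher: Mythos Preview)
Your proposal is correct and follows essentially the same approach as the paper's proof. Both reduce pseudofunctoriality to the oriented case of \cite{schweigert2018} via the gluing property of $\BBun_{\hat{\mathsf{G}}}^{\ori}(-)$, which the paper derives in one line from its construction as a homotopy fibre of $\pi\circ(-):\BBun_{\hat{\mathsf{G}}}(-)\to\BBun_{\mathbb{Z}_2}(-)$, while you unpack this into the equivalent statement about relative mapping groupoids sending homotopy pushouts to homotopy pullbacks; and both deduce homotopy invariance from that of $\mathcal{Z}$, the paper phrasing this as ``$\kappa$ induces a functor $R\Ind_{\hat{\phi}}^{-1}(Z,\varphi;\eta)\to R\Ind_{\hat{\phi}}^{-1}(Z,\varphi';\eta')$ making the spans of spans equivalent,'' which is a slightly cleaner formulation of your ``identification via pullback of $\kappa$.''
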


\begin{proof}
The proof that $\mathcal{Z}^{\hat{\phi}}$ is symmetric monoidal pseudofunctor is a direct modification of the oriented case \cite[\S 3]{schweigert2018}. The key point there is the gluing property of the stack $\BBun_{\mathsf{G}}(-)$, which ensures that $\BBun_{\mathsf{G}}(-)$ is compatible with the various compositions of cobordisms. The corresponding property of $\BBun^{\ori}_{\hat{\mathsf{G}}}(-)$ follows from its construction as a homotopy fibre of $\pi \circ (-) : \BBun_{\hat{\mathsf{G}}}(-) \rightarrow \BBun_{\mathbb{Z}_2}(-)$.

To verify the homotopy invariance of $\mathcal{Z}^{\hat{\phi}}$, suppose that $\kappa: Z \times I \rightarrow \mathbf{B} \hat{\mathsf{G}}$ is a homotopy relative $\partial Z$ from $\varphi$ to $\varphi^{\prime}$ which satisfies $\eta^{\prime} * (\Pi \circ \kappa) \simeq \eta$. The map $\kappa$ induces a functor
\[
R\Ind_{\hat{\phi}}^{-1}(Z, \varphi; \eta) \rightarrow R\Ind_{\hat{\phi}}^{-1}(Z, \varphi^{\prime}; \eta^{\prime})
\]
under which $\sigma^{\prime}$ and $\tau^{\prime}$ and pullback to $\sigma$ and $\tau$, respectively. The spans of spans of groupoids which underlie $R\Ind_{\hat{\phi}}^{-1}(Z, \varphi; \eta)$ and $R\Ind_{\hat{\phi}}^{-1}(Z, \varphi^{\prime}; \eta^{\prime})$ are therefore equivalent. Under this identification, the homotopy invariance of $\mathcal{Z}$ then ensures that the resulting maps of intertwiners are equivalent.
\end{proof}

The functor $\mathcal{Z}^{\hat{\phi}}$ is called the $\hat{\phi}$-pushforward of $\mathcal{Z}$. Motivated by \cite{freed2010}, \cite{morton2015}, we interpret $\mathcal{Z}^{\hat{\phi}}$ as a classical topological $\mathsf{H}$-gauge theory. The quantization of $\mathcal{Z}^{\hat{\phi}}$ is then given by the composition
\[
\mathcal{Z}^{\hat{\phi},\textnormal{orb}}: \hat{\mathsf{H}}\mhyphen \Cob^{\pi_{\hat{\mathsf{H}}}}_{\langle n, n-1, n-2\rangle} \xrightarrow[]{\mathcal{Z}^{\hat{\phi}}} 2\Vect_{\mathbb{C}}(\Grpd) \xrightarrow[]{\Par} 2\Vect_{\mathbb{C}},
\]
with $\Par$ as in Section \ref{sec:twistLin}. The composition $\mathcal{Z}^{\hat{\phi},\textnormal{orb}}$ is called the $\hat{\phi}$-orbifold of $\mathcal{Z}$.

In particular, for any $\mathbb{Z}_2$-graded group $\hat{\mathsf{G}}$ we can apply the orientation twisted pushforward construction to the map $\hat{\phi} = \pi_{\hat{\mathsf{G}}}$, regarded as a morphism from $\hat{\mathsf{G}}$ to the terminal object $(\mathbb{Z}_2 \xrightarrow[]{\id} \mathbb{Z}_2) \in \Grp_{\slash \mathbb{Z}_2}$. This allows us to formulate the following definition, which makes implicit use of Proposition \ref{prop:reductionToUnoriented}.

\begin{Def}
The $\mathsf{G}$-orbifold of $\mathcal{Z} \in \hat{\mathsf{G}} \mhyphen \TFT^{\pi}_{\langle n, n-1, n-2\rangle}$ is the unoriented topological field theory $\mathcal{Z}^{\pi,\textnormal{orb}} \in \TFT_{\langle n, n-1, n-2 \rangle}(2\Vect_{\mathbb{C}})$.
\end{Def}

Note that we orbifold by the group $\mathsf{G}$, as opposed to $\hat{\mathsf{G}}$. This is a key difference between the orientation twisted perspective taken in this paper and the equivariant perspective of \cite{maier2012}.

Oriented and orientation twisted orbifolding are compatible in the following sense.

\begin{Prop}
\label{prop:orbCompatibility}
For each morphism $\hat{\phi}: \hat{\mathsf{G}} \rightarrow \hat{\mathsf{H}}$ of $\mathbb{Z}_2$-graded groups, the assignment $\mathcal{Z} \mapsto \mathcal{Z}^{\hat{\phi}}$ extends to a pseudofunctor
\[
(-)^{\hat{\phi}}: 
\hat{\mathsf{G}} \mhyphen \TFT_{\langle n,n-1,n-2\rangle}^{\pi_{\hat{\mathsf{G}}}} \rightarrow \hat{\mathsf{H}} \mhyphen \TFT^{\pi_{\hat{\mathsf{H}}}}_{\langle n, n-1,n-2\rangle} (2\Vect_{\mathbb{C}}(\Grpd)).
\]
Moreover, the diagram
\[
\begin{tikzpicture}[baseline= (a).base]
\node[scale=1] (a) at (0,0){
\begin{tikzcd}[column sep=5.0em, row sep=1.5em]
\hat{\mathsf{G}} \mhyphen \TFT^{\pi_{\hat{\mathsf{G}}}}_{\langle n, n-1,n-2\rangle} \arrow{d}[left]{(-)^{\hat{\phi}}} \arrow{r}[above]{\mathcal{F}} & \mathsf{G} \mhyphen \TFT^{\ori}_{\langle n, n-1,n-2\rangle} \arrow{d}[right]{(-)^{\phi}} 
\\
\hat{\mathsf{H}} \mhyphen \TFT^{\pi_{\hat{\mathsf{H}}}}_{\langle n, n-1,n-2\rangle} (2\Vect_{\mathbb{C}}(\Grpd)) \arrow{d}[left]{\Par \circ(-)} \arrow{r}[above]{\mathcal{F}}& \mathsf{H} \mhyphen \TFT^{\ori}_{\langle n, n-1,n-2\rangle} (2\Vect_{\mathbb{C}}(\Grpd))  \arrow{d}[right]{\Par \circ( - )} 
\\
\hat{\mathsf{H}} \mhyphen \TFT^{\pi_{\hat{\mathsf{H}}}}_{\langle n, n-1,n-2\rangle} \arrow{r}[above]{\mathcal{F}} & \mathsf{H} \mhyphen \TFT^{\ori}_{\langle n, n-1,n-2\rangle}
\end{tikzcd}
};
\end{tikzpicture}
\]
consists of homotopy commutative squares.
\end{Prop}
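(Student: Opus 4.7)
The plan is to first upgrade the object-level construction $\mathcal{Z}\mapsto\mathcal{Z}^{\hat{\phi}}$ to a pseudofunctor by producing its action on $1$- and $2$-morphisms, and then verify homotopy commutativity of the two squares by tracing through the definitions and invoking Proposition \ref{prop:orientableFrBun} to match the orientation twisted homotopy fibres with the honest oriented ones.

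\textbf{Extension to a pseudofunctor.} A $1$-morphism $\theta : \mathcal{Z}_1\Rightarrow\mathcal{Z}_2$ in $\hat{\mathsf{G}}\mhyphen\TFT^{\pi_{\hat{\mathsf{G}}}}_{\langle n, n-1,n-2\rangle}$ is a symmetric monoidal pseudonatural transformation. I define $\theta^{\hat{\phi}}$ at $(X,f;h)$ to be the $1$-morphism in $2\Vect_{\mathbb{C}}(\Grpd)$ whose underlying span is the identity span on $R\Ind^{-1}_{\hat{\phi}}(X,f;h)$ and whose intertwiner is the $2$-vector bundle map whose component at $((\widetilde{f};\widetilde{h}); \tilde\epsilon)$ equals $\theta_{(X,\widetilde{f};\widetilde{h})}$; the naturality squares of $\theta$ in the groupoid $\BBun^{\ori}_{\hat{\mathsf{G}}}(X)$ supply the coherence $2$-isomorphisms. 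The naturality of $\theta$ with respect to $1$-morphisms of $\hat{\mathsf{G}}\mhyphen\Cob^{\pi_{\hat{\mathsf{G}}}}_{\langle n,n-1,n-2\rangle}$ lifts, via the canonical functor $R\Ind^{-1}_{\hat{\phi}}(Y,F;H)\to\BBun^{\ori}_{\hat{\mathsf{G}}}(Y)$, to the naturality squares required of $\theta^{\hat{\phi}}$. Modifications are handled analogously, pointwise. The unit and composition $2$-isomorphisms of $(-)^{\hat{\phi}}$ are induced by those of $\mathcal{Z}\mapsto\mathcal{R}_{\mathcal{Z}}^{X}$ from Proposition \ref{prop:vbMappingSpace}, which in turn come directly from the coherence data of the theories involved.

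\textbf{Homotopy commutativity of the top square.} Let $\mathcal{Z}\in\hat{\mathsf{G}}\mhyphen\TFT^{\pi_{\hat{\mathsf{G}}}}_{\langle n, n-1,n-2\rangle}$. For an oriented $(X,f_o)$, the image $\mathcal{F}(X,f_o)$ is an object of $\hat{\mathsf{H}}\mhyphen\Cob^{\pi_{\hat{\mathsf{H}}}}_{\langle n, n-1,n-2\rangle}$ whose double cover data is trivialized by the chosen orientation $\omega_X$. Proposition \ref{prop:orientableFrBun} provides canonical equivalences $\BBun_{\mathsf{G}}(X)\xrightarrow[]{\sim}\BBun^{\ori}_{\hat{\mathsf{G}}}(X)$ and $\BBun_{\mathsf{H}}(X)\xrightarrow[]{\sim}\BBun^{\ori}_{\hat{\mathsf{H}}}(X)$ which, by functoriality in the $\mathbb{Z}_2$-graded group, intertwine $\Ind_{\phi}$ with $\Ind_{\hat{\phi}}$. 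Passing to homotopy fibres gives an equivalence
\[
R\Ind^{-1}_{\phi}(X,f_o)\xrightarrow[]{\sim} R\Ind^{-1}_{\hat{\phi}}(\mathcal{F}(X,f_o)),
\]
under which the pseudofunctor $\mathcal{R}^{X,\hat{\phi}}_{\mathcal{Z}}$ restricts to $\mathcal{R}^{X,\phi}_{\mathcal{F}(\mathcal{Z})}$ (because, over oriented cobordisms, the homotopies $h,H,\eta$ used in the definition of $\mathcal{R}^X_{\mathcal{Z}}$ degenerate to the orientations themselves, so $\mathcal{R}^X_{\mathcal{Z}}$ evaluated on oriented data agrees with the M\"uller--Woike assignment). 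The same comparison on $1$- and $2$-morphisms of $\mathsf{H}\mhyphen\Cob^{\ori}_{\langle n, n-1, n-2\rangle}$ yields the required pseudonatural equivalence $\mathcal{F}(\mathcal{Z}^{\hat{\phi}})\simeq (\mathcal{F}(\mathcal{Z}))^{\phi}$, and the construction is pseudonatural in $\mathcal{Z}$.

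\textbf{Homotopy commutativity of the bottom square.} This square is essentially formal: both $\mathcal{F}$ and $\Par\circ(-)$ are defined by post-composition (the former with a pseudofunctor of cobordism bicategories, the latter with the linearization pseudofunctor of Section \ref{sec:twistLin}), so they commute strictly at the level of underlying pseudofunctors. The only content is to check that the equivalence from the previous paragraph is carried by $\Par$ to the corresponding equivalence for the quantized theories, which is immediate from the fact that $\Par$ is a pseudofunctor and preserves equivalences.

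\textbf{Main obstacle.} The genuine work is concentrated in the first paragraph: verifying the coherence of $(-)^{\hat{\phi}}$ on $1$- and $2$-morphisms. This requires checking that the coherence data of $\theta$ pulled back along the various canonical functors between the homotopy fibres $R\Ind^{-1}_{\hat{\phi}}(-)$ and the bundle stacks $\BBun^{\ori}_{\hat{\mathsf{G}}}(-)$ assemble into a globally coherent family, and that this assembly respects the horizontal composition of pseudonatural transformations and the composition of modifications. The arguments are straightforward but notationally involved, and parallel the corresponding verifications in the oriented setting of \cite{schweigert2018}; the only new ingredient is bookkeeping of the orientation framings, which behave strictly functorially under $\hat{\phi}$ and therefore present no essential difficulty beyond careful tracking.
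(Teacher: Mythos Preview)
Your proposal is correct and follows essentially the same approach as the paper, which in fact omits the construction of the pseudofunctor as ``a straightforward, but tedious, exercise'' and reduces the second statement to ``a direct comparison'' with the oriented pushforward of \cite{schweigert2018b}. Your sketch is thus considerably more detailed than the paper's own proof; in particular your invocation of Proposition~\ref{prop:orientableFrBun} to identify the oriented and orientation twisted homotopy fibres is exactly the content behind the paper's phrase ``direct comparison''.
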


\begin{proof}
We omit the construction of the pseudofunctor $(-)^{\hat{\phi}}$, which is a straightforward, but tedious, exercise. The second statement follows from a direct comparison between our construction of the orientation twisted pushforward map and that of the oriented pushforward from \cite{schweigert2018b}.
\end{proof}

\subsection{Non-extended theories in two dimensions}
\label{sec:twoDimOrb}

We study two dimensional non-extended orientation twisted equivariant topological field theories and their orbifolds. This leads to a different perspective on the algebraic classification of such theories given by Kapustin--Turzillo \cite{kapustin2017}.

We begin by recalling the classification of two dimensional (un)oriented topological field theories. Let $(\mathcal{A}, \otimes, \mathbf{1})$ be a symmetric monoidal category.

\begin{Def}[{\cite{turaev2006}, \cite{alexeevski2006}}]
An unoriented Frobenius algebra in $\mathcal{A}$ is the data of
\begin{enumerate}[label=(\roman*)]
\item a commutative Frobenius algebra object $A$ in $\mathcal{A}$, with multiplication $m: A \otimes A \rightarrow A$ and comultiplication $\Delta: A \rightarrow A \otimes A$,
\item a Frobenius algebra morphism $p:A^{\coop} \rightarrow A$, where $A^{\coop}$ is $A$ with the opposite multiplication and coopposite comultiplication, which satisfies $p \circ p = \id_A$, and
\item a morphism $Q: \mathbf{1} \rightarrow A$, called the charge of the crosscap,
\end{enumerate}
for which the diagrams
\begin{equation}
\label{eq:linConstraint}
\begin{tikzpicture}[baseline= (a).base]
\node[scale=1] (a) at (0,0){
\begin{tikzcd}[column sep={5.5em,between origins},row sep={1.76em,between origins}]
{} & {} & A \otimes A \arrow{drr}[above right]{m} & {} & {} \\
\mathbf{1} \otimes A \arrow{urr}[above left]{Q \otimes \id_A} \arrow{dr}[below left]{Q \otimes \id_A} & {} & {} & {} & A \\
{} & A \otimes A \arrow{rr}[below]{m} & {} & A \arrow{ur}[below right]{p} & {}
\end{tikzcd}
};
\end{tikzpicture}
\end{equation}
and
\[
\begin{tikzpicture}[baseline= (a).base]
\node[scale=1] (a) at (0,0){
\begin{tikzcd}[column sep={6.0em,between origins},row sep={1.75em,between origins}]
{} & \mathbf{1} \otimes \mathbf{1} \arrow{rr}[above]{Q \otimes Q} & {} & A \otimes A \arrow{rd}[above right]{m} \\
\mathbf{1} \arrow{ru}[above]{\sim} \arrow{rd}[below left]{\textnormal{unit}} & {} & {} & {} & A \\
{} & A \arrow{r}[below]{\Delta} & A \otimes A \arrow{r}[below]{p \otimes \id_A} & A \otimes A \arrow{ru}[below right]{m}
\end{tikzcd}
};
\end{tikzpicture}
\]
commute.
\end{Def}

A morphism of unoriented Frobenius algebras is a morphism\footnote{In the present context, a morphism of Frobenius algebras is a map which is both an algebra and a coalgebra homomorphism.} of the underlying commutative Frobenbius algebras which intertwines the involutions and respects the charges of the crosscap. Unoriented Frobenius algebras assemble to a category, which is in fact a groupoid.

\begin{Thm}
\phantomsection
\label{thm:2dClassification}
\begin{enumerate}[label=(\roman*)]
\item The groupoid $\TFT^{\ori}_{\langle 2,1 \rangle}(\mathcal{A})$ is equivalent to the groupoid of Frobenius algebras in $\mathcal{A}$.

\item The groupoid $\TFT_{\langle 2,1 \rangle}(\mathcal{A})$ is equivalent to the groupoid of unoriented Frobenius algebras in $\mathcal{A}$.
\end{enumerate}
\end{Thm}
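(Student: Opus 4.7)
The plan is to apply the generators-and-relations presentation of the $1{+}1$ dimensional cobordism category. Part (i) is the classical theorem of Abrams and others identifying $\Cob^{\ori}_{\langle 2, 1\rangle}$ with the free symmetric monoidal category generated by a commutative Frobenius algebra object, which I would invoke directly. The underlying strategy, which also guides part (ii), is to choose a generic Morse function on each cobordism to decompose it into elementary handle surgeries (cap, cup, identity cylinder, pair of pants, copair of pants, braiding), and then translate the topological relations arising from critical point cancellations, handle slides, and the classification of compact surfaces into the defining algebraic identities. For oriented surfaces these are precisely the commutative Frobenius algebra axioms.

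For part (ii), the same Morse-theoretic strategy applies to unoriented surfaces with two additional generators accounting for the new topological phenomena. The first is the M\"obius band, viewed as a cobordism $\varnothing \to S^1$, which produces the crosscap charge $Q \colon \mathbf{1} \to A$. The second is the orientation-reversing cylinder $S^1 \to S^1$, which produces the involution $p \colon A \to A$; geometrically, this records the non-trivial self-diffeomorphism of $S^1 \times [0,1]$ acting by reflection on one boundary circle, which in the unoriented category must be encoded as a distinct morphism.

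The remaining axioms of an unoriented Frobenius algebra arise from topological identities among these generators. The relation $p \circ p = \id_A$ reflects the isotopy of a doubly reflected cylinder to the standard one, while the Frobenius morphism condition $p \colon A^{\coop} \to A$ encodes the fact that applying $p$ simultaneously to all three boundaries of a pair of pants reverses the cyclic ordering of its boundaries, interchanging multiplication with opposite multiplication and, dually, comultiplication with its coopposite. The linear constraint \eqref{eq:linConstraint} follows from attaching a M\"obius band to one input of a pair of pants, producing a surface whose remaining output boundary is fixed by the $p$-action. Finally, the crosscap identity is the algebraic form of the decomposition of a once-punctured Klein bottle, obtainable either as two M\"obius bands glued via a pair of pants or as a twisted handle (one leg orientation-reversed) attached to a disk.

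The main obstacle is the verification of completeness, namely that the above generators and relations furnish a full presentation of $\Cob_{\langle 2, 1\rangle}$. This requires a careful analysis of Morse decompositions in the presence of non-orientable handles, together with the classification of compact unoriented surfaces and the identification of a generating set of moves between distinct Morse presentations; this analysis was carried out in the works cited in the statement of the theorem, which I would invoke to close the argument.
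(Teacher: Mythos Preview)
Your proposal is correct and follows essentially the same approach as the paper: both reduce the statement to a generators-and-relations presentation of $\Cob^{(\ori)}_{\langle 2,1\rangle}$ and invoke the literature (Kock for the oriented case, Turaev--Turner and Alexeevski--Natanzon for the unoriented case) for completeness, observing that since these references present the cobordism category itself the result holds for any symmetric monoidal target $\mathcal{A}$. Your write-up supplies more of the geometric heuristics behind the relations than the paper's terse proof, but the strategy is identical.
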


\begin{proof}
The first statement is proved in \cite[Theorem 3.6.19]{kock2004}, for example. When $\mathcal{A} = \Vect_{\mathbb{C}}$ the second statement appears as \cite[Proposition 2.9]{turaev2006}, \cite[Theorem 4.4]{alexeevski2006}. These proofs can be interpreted as giving a generators and relations presentation of the symmetric monoidal category $\Cob_{\langle 2, 1\rangle}$ and so also apply to a general target category $\mathcal{A}$.
\end{proof}

We briefly indicate the construction of an unoriented Frobenius algebra from an unoriented topological field theory. The morphisms of the category $\Cob_{\langle 2,1\rangle}$ are generated by the image of the forgetful functor $\mathcal{F}: \Cob_{\langle 2, 1 \rangle}^{\ori} \rightarrow \Cob_{\langle 2, 1\rangle}$ together with the incompatibly oriented cylinder $S^1 \rightarrow S^1$ and the crosscap $\mathbb{M}: \varnothing^1 \rightarrow S^1$, which we draw as
\[
\begin{tikzpicture}[very thick,scale=2.01,color=black,baseline]
\coordinate (r1) at (0.1,0.15);
\coordinate (r2) at (0.1,-0.15);
\coordinate (r3) at (0.8,0.15);
\coordinate (r4) at (0.8,-0.15);
\draw (r1) to (r3);
\draw (r2) to (r4);
\draw[very thick, blue!80!black,decoration={markings, mark=at position 0.5 with {\arrow{<}}},        postaction={decorate}] (r1) .. controls +(0.15,0) and +(0.15,0) ..  (r2); 
\draw[very thick, blue!80!black] (r1) .. controls +(-0.15,0) and +(-0.15,0) ..  (r2); 
\draw[very thick, blue!80!black,decoration={markings, mark=at position 0.5 with {\arrow{>}}},        postaction={decorate}] (r3) .. controls +(0.15,0) and +(0.15,0) ..  (r4); 
\draw[very thick, blue!80!black,opacity=0.2] (r3) .. controls +(-0.15,0) and +(-0.15,0) ..  (r4); 
\end{tikzpicture}
\qquad \textnormal{and}\qquad
\begin{tikzpicture}[very thick,scale=2.01,color=black, baseline]
\coordinate (p1) at (0,-0.175);
\coordinate (p2) at (0,0.175);
\coordinate (x1) at (-0.24,-0.045);
\coordinate (x2) at (-0.17,0.045);
\coordinate (y1) at (-0.24,0.045);
\coordinate (y2) at (-0.17,-0.045);
\draw[very thick] (p1) .. controls +(-0.4,0) and +(-0.4,0) ..  (p2); 
\draw[very thick, blue!80!black] (p1) .. controls +(0.15,0) and +(0.15,0) ..  (p2); 
\draw[very thick, blue!80!black, opacity=0.2] (p1) .. controls +(-0.15,0) and +(-0.15,0) ..  (p2); 
\draw[thick] (x1) to (x2);
\draw[thick] (y1) to (y2);
\draw[thick] (-0.205,0) circle (0.05cm);
\end{tikzpicture}
,
\]
respectively. Here the arrows indicate the embeddings of the boundary circles. Under the equivalence of Theorem \ref{thm:2dClassification}, these cobordisms determine the involution $p$ and the morphism $Q$. This explains the naming of $Q$. The constraint \eqref{eq:linConstraint}, for example, is imposed by the following equality of composed cobordisms in $\Cob_{\langle 2, 1 \rangle}$:
\begin{equation}
\label{eq:quadCrossConstr}
\Sigma_1
=
\begin{tikzpicture}[very thick,scale=2.01,baseline=-0.35cm,color=black]
\coordinate (q1) at (-0.4,0.175);
\coordinate (q2) at (-0.4,0.525);
\coordinate (x1) at (-0.63,0.375);
\coordinate (x2) at (-0.58,0.325);
\coordinate (y1) at (-0.63,0.325);
\coordinate (y2) at (-0.58,0.375);
\draw[very thick] (q1) .. controls +(-0.4,0) and +(-0.4,0) ..  (q2); 
\draw[very thick, blue!80!black,decoration={markings, mark=at position 0.5 with {\arrow{>}}},        postaction={decorate}] (q1) .. controls +(0.15,0) and +(0.15,0) ..  (q2); 
\draw[very thick, blue!80!black, opacity=0.2] (q1) .. controls +(-0.15,0) and +(-0.15,0) ..  (q2); 
\draw[thick] (x1) to (x2);
\draw[thick] (y1) to (y2);
\draw[thick] (-0.605,0.35) circle (0.05cm);
\coordinate (p1) at (0,-0.575);
\coordinate (p2) at (0,-0.225);
\coordinate (p3) at (0,0.175);
\coordinate (p4) at (0,0.525);
\coordinate (p5) at (0.8,0.15);
\coordinate (p6) at (0.8,-0.15);
\draw (p2) .. controls +(0.35,0) and +(0.35,0) ..  (p3); 
\draw (p4) .. controls +(0.5,0) and +(-0.5,0) ..  (p5); 
\draw (p6) .. controls +(-0.5,0) and +(0.5,0) ..  (p1); 
\draw[very thick, blue!80!black,decoration={markings, mark=at position 0.5 with {\arrow{>}}},        postaction={decorate}] (p1) .. controls +(0.15,0) and +(0.15,0) ..  (p2); 
\draw[very thick, blue!80!black] (p1) .. controls +(-0.15,0) and +(-0.15,0) ..  (p2); 
\draw[very thick, blue!80!black,decoration={markings, mark=at position 0.5 with {\arrow{>}}},        postaction={decorate}] (p3) .. controls +(0.15,0) and +(0.15,0) ..  (p4); 
\draw[very thick, blue!80!black] (p3) .. controls +(-0.15,0) and +(-0.15,0) ..  (p4); 
\draw[very thick, blue!80!black,decoration={markings, mark=at position 0.5 with {\arrow{<}}},        postaction={decorate}] (p5) .. controls +(0.15,0) and +(0.15,0) ..  (p6); 
\draw[very thick, blue!80!black,opacity=0.2] (p5) .. controls +(-0.15,0) and +(-0.15,0) ..  (p6); 
\node at (0.05,-0.75)  {\scriptsize $\ell^{(1)}_1$};
\node at (0.05,0.70)  {\scriptsize $\ell^{(1)}_2$};
\node at (0.90,-0.3)  {\scriptsize $\ell^{(1)}_3$};
\end{tikzpicture}
\;\;
=
\;\;
\begin{tikzpicture}[very thick,scale=2.01,color=black,baseline]
\coordinate (q1) at (-0.4,0.175);
\coordinate (q2) at (-0.4,0.525);
\coordinate (x1) at (-0.63,0.375);
\coordinate (x2) at (-0.58,0.325);
\coordinate (y1) at (-0.63,0.325);
\coordinate (y2) at (-0.58,0.375);
\draw[very thick] (q1) .. controls +(-0.4,0) and +(-0.4,0) ..  (q2); 
\draw[very thick, blue!80!black,decoration={markings, mark=at position 0.5 with {\arrow{>}}},        postaction={decorate}] (q1) .. controls +(0.15,0) and +(0.15,0) ..  (q2); 
\draw[very thick, blue!80!black, opacity=0.2] (q1) .. controls +(-0.15,0) and +(-0.15,0) ..  (q2); 
\draw[thick] (x1) to (x2);
\draw[thick] (y1) to (y2);
\draw[thick] (-0.605,0.35) circle (0.05cm);
\coordinate (p1) at (0,-0.575);
\coordinate (p2) at (0,-0.225);
\coordinate (p3) at (0,0.175);
\coordinate (p4) at (0,0.525);
\coordinate (p5) at (0.8,0.15);
\coordinate (p6) at (0.8,-0.15);
\draw (p2) .. controls +(0.35,0) and +(0.35,0) ..  (p3); 
\draw (p4) .. controls +(0.5,0) and +(-0.5,0) ..  (p5); 
\draw (p6) .. controls +(-0.5,0) and +(0.5,0) ..  (p1); 
\draw[very thick, blue!80!black,decoration={markings, mark=at position 0.5 with {\arrow{>}}},        postaction={decorate}] (p1) .. controls +(0.15,0) and +(0.15,0) ..  (p2); 
\draw[very thick, blue!80!black] (p1) .. controls +(-0.15,0) and +(-0.15,0) ..  (p2); 
\draw[very thick, blue!80!black,decoration={markings, mark=at position 0.5 with {\arrow{>}}},        postaction={decorate}] (p3) .. controls +(0.15,0) and +(0.15,0) ..  (p4); 
\draw[very thick, blue!80!black] (p3) .. controls +(-0.15,0) and +(-0.15,0) ..  (p4); 
\draw[very thick, blue!80!black,decoration={markings, mark=at position 0.5 with {\arrow{<}}},        postaction={decorate}] (p5) .. controls +(0.15,0) and +(0.15,0) ..  (p6); 
\draw[very thick, blue!80!black,opacity=0.2] (p5) .. controls +(-0.15,0) and +(-0.15,0) ..  (p6); 
\node at (0.05,-0.75)  {\scriptsize $\ell^{(2)}_1$};
\node at (0.05,0.70)  {\scriptsize $\ell^{(2)}_2$};
\node at (1.0,-0.4)  {\scriptsize $\ell^{(2)}_3$};
\node at (2.00,-0.30)  {\scriptsize $\ell^{(2)}_4$};
\coordinate (r1) at (1.2,0.15);
\coordinate (r2) at (1.2,-0.15);
\coordinate (r3) at (1.9,0.15);
\coordinate (r4) at (1.9,-0.15);
\draw (r1) to (r3);
\draw (r2) to (r4);
\draw[very thick, blue!80!black,decoration={markings, mark=at position 0.5 with {\arrow{<}}},        postaction={decorate}] (r1) .. controls +(0.15,0) and +(0.15,0) ..  (r2); 
\draw[very thick, blue!80!black] (r1) .. controls +(-0.15,0) and +(-0.15,0) ..  (r2); 
\draw[very thick, blue!80!black,decoration={markings, mark=at position 0.5 with {\arrow{>}}},        postaction={decorate}] (r3) .. controls +(0.15,0) and +(0.15,0) ..  (r4); 
\draw[very thick, blue!80!black,opacity=0.2] (r3) .. controls +(-0.15,0) and +(-0.15,0) ..  (r4); 
\end{tikzpicture}
= \Sigma_2.
\end{equation}

Consider now the $\mathsf{G}$-equivariant setting. We begin with the oriented case. Let $\mathcal{Z} \in \mathsf{G} \mhyphen \TFT^{\ori}_{\langle 2,1\rangle}$. According to Theorem \ref{thm:2dClassification} (see also \cite[Example 3.11]{schweigert2019}), the pushforward $\mathcal{Z}^{\mathsf{G} \rightarrow \pt} \in \TFT^{\ori}_{\langle 2,1\rangle} (\Vect_{\mathbb{C}}(\Grpd))$ is determined by a commutative Frobenius algebra structure on
\begin{equation}
\label{eq:GTuraevFunctor}
\mathcal{Z}^{\mathsf{G} \rightarrow \pt}(S^1) = \left(
\BBun_{\mathsf{G}}(S^1) \simeq \mathsf{G} \git \mathsf{G} \xrightarrow[]{\mathcal{R}^{S^1}_{\mathcal{Z}}} \Vect_{\mathbb{C}} \right),
\end{equation}
considered as an object of $\Vect_{\mathbb{C}}(\Grpd)$. Concretely, this is the data of a $\mathsf{G}$-graded algebra
\[
A= \bigoplus_{g \in \mathsf{G}} A_g
\]
together with a group homomorphism $\mathfrak{a} : \mathsf{G} \rightarrow \Aut_{\textnormal{alg}}(A)$ and a trace $\langle - \rangle_e : A_e \rightarrow \mathbb{C}$ which satisfy the following conditions:
\begin{enumerate}[label=(\roman*)]
\item For all $g,h \in \mathsf{G}$, the map $\mathfrak{a}_g: A \rightarrow A$ restricts to a map $A_h \rightarrow A_{g hg^{-1}}$.

\item The trace $\langle - \rangle_e$ is $\mathsf{G}$-invariant and induces a nondegenerate bilinear form $\langle -, - \rangle_g : A_g \otimes_{\mathbb{C}} A_{g^{-1}} \rightarrow \mathbb{C}$ for each $g \in \mathsf{G}$.

\item For each $x \in A_h$ and $y \in A_g$, the equality $\mathfrak{a}_g(xy) = y x$ holds.
\end{enumerate}

The theory $\mathcal{Z}^{\mathsf{G} \rightarrow \pt}$ has additional structure, however, arising from its construction as a pushforward. Namely, it also satisfies the following conditions (see \cite[\S II.3.2]{turaev2010}):
\begin{enumerate}[label=(\roman*)]
\setcounter{enumi}{3}
\item For each $g \in \mathsf{G}$, the restriction of $\mathfrak{a}_g$ to $A_g$ is the identity.

\item For all $g, h \in \mathsf{G}$, the equality
\[
\sum_i \mathfrak{a}_h(x_i)x^i = \sum_j y_j \mathfrak{a}_g(y^j)
\]
holds in $A_{hgh^{-1}g^{-1}}$, where
\[
\Delta_g(1) = \sum_i x_i \otimes x^i \in A_g \otimes_{\mathbb{C}} A_{g^{-1}},
\qquad
\Delta_h(1) = \sum_j y_j \otimes y^j \in A_h \otimes_{\mathbb{C}} A_{h^{-1}}
\]
are the twisted sector comultiplications.
\end{enumerate}

\begin{Def}[{\cite[\S II.3.2]{turaev2010}}]
A triple $(A, \mathfrak{a}, \langle - \rangle_e)$ which satisfies conditions \textnormal{(i)-(v)} is called a $\mathsf{G}$-Turaev algebra.
\end{Def}

The image of the oriented pushforward $(-)^{\mathsf{G} \rightarrow \pt}$ can be characterized as follows.

\begin{Prop}[{\cite[Theorem 3.1]{turaev2010}; see also \cite[\S 3.3]{schweigert2019}}]
\label{prop:oriEquivClass}
The oriented pushforward
\[
(-)^{\mathsf{G} \rightarrow \pt}: \mathsf{G} \mhyphen \TFT^{\ori}_{\langle 2, 1\rangle} \rightarrow \TFT^{\ori}_{\langle 2, 1\rangle} (\Vect_{\mathbb{C}}(\Grpd))
\]
is an equivalence onto the non-full subcategory of $\TFT^{\ori}_{\langle 2, 1\rangle} (\Vect_{\mathbb{C}}(\Grpd))$ spanned by $\mathsf{G}$-Turaev algebras.
\end{Prop}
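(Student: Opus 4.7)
The plan is to proceed in three stages: first, unpack the algebraic content of $\mathcal{Z}^{\mathsf{G} \rightarrow \pt}$ on the circle and on its elementary bordisms; second, verify that this data satisfies the $\mathsf{G}$-Turaev algebra axioms (i)--(v); third, establish essential surjectivity onto and full faithfulness over the Turaev subcategory.

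For the first stage, I would use the equivalence $\BBun_{\mathsf{G}}(S^1) \simeq \mathsf{G} \git \mathsf{G}$ from Proposition \ref{prop:holonomyDescr} to rewrite $\mathcal{Z}^{\mathsf{G} \rightarrow \pt}(S^1) = \mathcal{R}^{S^1}_{\mathcal{Z}}$ as the $\Vect_{\mathbb{C}}$-valued functor with fibres $A_g := \mathcal{Z}(S^1, P_g)$ and whose structure morphisms give automorphisms $\mathfrak{a}_g$ realizing the conjugation action. The oriented pair of pants, decorated by the unique (up to isomorphism) $\mathsf{G}$-bundle whose ordered boundary holonomies read $(g, h; gh)$, is sent by $\mathcal{Z}$ to the graded multiplication $A_g \otimes A_h \rightarrow A_{gh}$; the disk and its reverse determine the unit and the trace $\langle - \rangle_e : A_e \rightarrow \mathbb{C}$.

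For the second stage, axioms (i)--(iii) are a translation of the usual commutative Frobenius algebra axioms through the $\mathsf{G}$-grading, obtained by applying $\mathcal{Z}$ to the standard relations (associativity and Frobenius compatibility of the pair of pants, symmetry of the pairing) between cobordisms, now decorated with compatible $\mathsf{G}$-bundles. The key $\mathsf{G}$-equivariant input is axiom (iv): the automorphism of $P_g \rightarrow S^1$ given by multiplication by $g$ on the fibre is isotopic through $\mathsf{G}$-bundle automorphisms covering a full rotation of $S^1$ to the identity, so homotopy invariance of $\mathcal{Z}$ forces $\mathfrak{a}_g|_{A_g} = \id_{A_g}$. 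Axiom (v) arises from comparing the two ways of cutting the oriented torus $\mathbb{T}^2$, decorated by a $\mathsf{G}$-bundle of holonomy $(g,h) \in \mathsf{G}^{(2)}$, into a pair-of-pants glued to its reverse along either generator; applying $\mathcal{Z}$ expresses both sides of (v) as traces of composed multiplications, which agree by this diffeomorphism.

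For the third stage, I would invoke (or sketch) a generators-and-relations presentation of $\mathsf{G} \mhyphen \Cob^{\ori}_{\langle 2, 1 \rangle}$ in the spirit of \cite{kock2004}: the objects are $\mathsf{G}$-bundles on disjoint circles, the generating $1$-morphisms are the cap, cup, pair-of-pants and its reverse (each decorated by the unique compatible $\mathsf{G}$-bundle) together with conjugation cylinders, and the relations are the pullbacks along $B\mathsf{G}$ of the classical $\Cob^{\ori}_{\langle 2, 1 \rangle}$ relations supplemented by the compatibility (iv) of the conjugation cylinder with the $g$-sector and the torus relation (v). Axioms (i)--(v) then say exactly that a $\mathsf{G}$-Turaev algebra is the same as a symmetric monoidal functor $\mathsf{G} \mhyphen \Cob^{\ori}_{\langle 2, 1 \rangle} \rightarrow \Vect_{\mathbb{C}}(\Grpd)$ of this particular form, exhibiting $(-)^{\mathsf{G}\rightarrow\pt}$ as essentially surjective onto the Turaev subcategory. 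Full faithfulness is immediate from the presentation, as a monoidal natural transformation between two $\mathsf{G}$-equivariant theories is determined by its components at the generating circles $(S^1, P_g)$, and any family of maps $A_g \rightarrow A_g^\prime$ intertwining the Turaev algebra structures lifts uniquely to such a transformation.

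The main obstacle will be the third stage: extracting a clean generators-and-relations presentation of $\mathsf{G} \mhyphen \Cob^{\ori}_{\langle 2, 1 \rangle}$ in which axioms (iv) and (v) appear as the only relations beyond the classical ones. Once this bookkeeping is in place, both directions of the equivalence reduce to an essentially algebraic translation, and the argument can largely be imported from \cite[Theorem 3.1]{turaev2010} and \cite[\S 3.3]{schweigert2019}.
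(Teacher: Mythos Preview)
The paper does not supply its own proof of this proposition: it is stated with attribution to \cite[Theorem 3.1]{turaev2010} and \cite[\S 3.3]{schweigert2019}, and the only additional content is the remark immediately following it, which clarifies that morphisms of $\mathsf{G}$-Turaev algebras, viewed inside $\TFT^{\ori}_{\langle 2,1\rangle}(\Vect_{\mathbb{C}}(\Grpd))$, are those which are the identity on the underlying spans. Your three-stage outline is essentially Turaev's argument and is the right approach.

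One correction to your second stage: axiom (v) does not come from the closed torus with commuting holonomies $(g,h)\in\mathsf{G}^{(2)}$. The identity lives in $A_{hgh^{-1}g^{-1}}$, so the relevant surface is the once-punctured torus (a cobordism $\varnothing\to S^1$), decorated by a bundle whose holonomies along the two generators are $g$ and $h$ (no commutation assumed) and whose boundary holonomy is the commutator $hgh^{-1}g^{-1}$. The two sides of (v) arise from the two handle decompositions of this surface, each realising it as a pair of pants composed with its reverse along one of the two generators. Your description ``traces of composed multiplications'' is not quite right either: neither side is a trace, since the boundary circle is nonempty.

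For the third stage, do not forget that the target subcategory is \emph{non-full}: this is where the paper's remark enters. A morphism in $\TFT^{\ori}_{\langle 2,1\rangle}(\Vect_{\mathbb{C}}(\Grpd))$ between two pushforwards could in principle involve nontrivial equivalences of the underlying spans, but the image of $(-)^{\mathsf{G}\to\pt}$ only contains those morphisms whose span components are identities. Your full-faithfulness argument should be phrased accordingly.
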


\begin{Rem}
Interpreted as objects of $\TFT^{\ori}_{\langle 2, 1\rangle} (\Vect_{\mathbb{C}}(\Grpd))$, a morphism of $\mathsf{G}$-Turaev algebras, as defined in \cite[\S II.3.2]{turaev2010}, is a morphism in $\TFT^{\ori}_{\langle 2, 1\rangle} (\Vect_{\mathbb{C}}(\Grpd))$ which is the identity on the underlying spans.
\end{Rem}

Consider now the unoriented case. Let $\mathcal{Z} \in \hat{\mathsf{G}} \mhyphen \TFT^{\pi}_{\langle 2,1\rangle}$. By Theorem \ref{thm:2dClassification}, the pushforward $\mathcal{Z}^{\pi} \in \TFT_{\langle 2,1\rangle} (\Vect_{\mathbb{C}}(\Grpd))$ is determined by an unoriented Frobenius algebra structure on the functor \eqref{eq:GTuraevFunctor}. In addition to the underlying commutative Frobenius algebra, this unpacks to the data of an extension of $\mathfrak{a}$ to a $\mathbb{Z}_2$-graded group homomorphism $\hat{\mathfrak{a}} : \hat{\mathsf{G}} \rightarrow \Aut^{\text{\textnormal{gen}}}_{\mathsf{alg}}(A)$, where $\Aut^{\text{\textnormal{gen}}}_{\mathsf{alg}}(A)$ is the $\mathbb{Z}_2$-graded group of automorphisms and anti-automorphisms of $A$, and elements $Q_{\varsigma} \in A_{\varsigma^2}$, $\varsigma \in \hat{\mathsf{G}} \backslash \mathsf{G}$, which satisfy the following conditions:
\begin{enumerate}[label=(\roman*)]
\setcounter{enumi}{5}
\item For each $g \in \mathsf{G}$ and $\omega \in \hat{\mathsf{G}}$, the linear map $\hat{\mathfrak{a}}_{\omega}: A \rightarrow A$ restricts to a map $A_g \rightarrow A_{\omega g^{\pi(\omega)} \omega^{-1}}$.

\item The trace $\langle - \rangle_e$ is $\hat{\mathsf{G}}$-invariant.

\item For each $\varsigma \in \hat{\mathsf{G}} \backslash \mathsf{G}$ and $\omega \in \hat{\mathsf{G}}$, the equality $\hat{\mathfrak{a}}_{\omega}(Q_{\varsigma}) = Q_{\omega \varsigma^{\pi(\omega)} \omega^{-1}}$ holds.

\item For each $\varsigma \in \hat{\mathsf{G}} \backslash \mathsf{G}$ and $x \in A_g$, the equality $Q_{\varsigma} x = \hat{\mathfrak{a}}_{\varsigma}(x)  Q_{\varsigma g}$ holds.

\item Let $\varsigma_1, \varsigma_2 \in \hat{\mathsf{G}} \backslash \mathsf{G}$. Given a basis $\{x^i_{\varsigma_1 \varsigma_2}\}_i$ of $A_{(\varsigma_1 \varsigma_2)^{-1}}$, with dual basis $\{x_i^{\varsigma_1 \varsigma_2}\}_i$ of $A_{\varsigma_1 \varsigma_2}$, the equality
\[
\sum_i \hat{\mathfrak{a}}_{\varsigma_1} (x^i_{\varsigma_1 \varsigma_2} ) x_i^{\varsigma_1 \varsigma_2} = Q_{\varsigma_1} Q_{\varsigma_2}
\]
holds in $A_{\varsigma^2_1 \varsigma^2_2}$.
\end{enumerate}

We briefly explain the origin of some of these conditions. The value of $\mathcal{Z}^{\pi}$ on the crosscap is a morphism which, in terms of the equivalence \eqref{eq:crosscapGroupoid}, has underlying span
\begin{equation}
\label{eq:ccSpan}
\begin{tikzpicture}[baseline= (a).base]
\node[scale=1] (a) at (0,0){
\begin{tikzcd}[column sep=1.5em, row sep=1.5em]
& \BBun^{\ori}_{\hat{\mathsf{G}}}(\mathbb{M}) \arrow{ld} \arrow{rd}[above]{t} & \\
\pt && \mathsf{G} \git \mathsf{G}.
\end{tikzcd}
};
\end{tikzpicture}
\end{equation}
The functor $t$ is given on objects by $t(\varsigma) = \varsigma^2$ and on morphisms by the identity. We must also give a morphism from the trivial line bundle on $\BBun^{\ori}_{\hat{\mathsf{G}}}(\mathbb{M})$ to $t^* \mathcal{R}_{\mathcal{Z}}^{S^1}$. The latter is the data of linear maps
\[
Q_{\varsigma}: \mathbb{C} \rightarrow \mathcal{R}_{\mathcal{Z}}^{S^1}(\varsigma^2),
\qquad
\varsigma \in \hat{\mathsf{G}} \backslash \mathsf{G}
\]
henceforth identified with their values on $1 \in \mathbb{C}$, which satisfy $\mathfrak{a}_g(Q_{\varsigma}) = Q_{g \varsigma g^{-1}}$. This explains the $\mathsf{G}$-sector of condition (viii); the $\hat{\mathsf{G}} \backslash \mathsf{G}$-sector is obtained by capping the constraint \eqref{eq:linConstraint} from the left. The constraint \eqref{eq:linConstraint} itself implies condition (ix). To see this, consider the enhancement of equation \eqref{eq:quadCrossConstr} to an equation in $\hat{\mathsf{G}} \mhyphen \Cob^{\pi}_{\langle 2,1\rangle}$ by equipping the cobordisms with orientation twisted $\mathsf{G}$-bundles whose holonomies along the indicated loops are
\[
\ell^{(1)}_1 = g, \qquad \ell^{(1)}_2 = \varsigma^2, \qquad \ell^{(1)}_3 = \varsigma^2 g
\]
and
\[
\ell^{(2)}_1 = g, \qquad \ell^{(2)}_2 = (\varsigma^{-1} g^{-1})^2, \qquad \ell^{(2)}_3 = \varsigma^{-1} g^{-1} \varsigma^{-1}, \qquad \ell^{(2)}_4 = \varsigma^2 g.
\]
A short argument using the Seifert--van Kampen theorem and Proposition \ref{prop:holonomyDescr} shows that
\[
\BBun_{\hat{\mathsf{G}}}^{\ori}(\Sigma_k) \simeq \{ (g^{\prime}, \omega^{\prime}) \in \mathsf{G} \times \hat{\mathsf{G}} \backslash \mathsf{G} \} \git \mathsf{G}, \qquad k=1,2
\]
where $g^{\prime}$ and $\omega^{\prime}$ are the holonomies around $\ell_1^{(k)}$ and $\ell_2^{(k)}$, respectively. The associated decorated spans are
\[
\begin{tikzpicture}[baseline= (a).base]
\node[scale=1] (a) at (0,0){
\begin{tikzcd}[column sep=3.0em, row sep=1.0em]
& \BBun_{\hat{\mathsf{G}}}^{\ori}(\Sigma_k) \arrow{ld}[above]{s} \arrow{rd}[above]{t_k} & \\
\mathsf{G} \git \mathsf{G} \arrow[d, "\mathcal{Z}^{\pi}(S^1)"{name=A, left}] && \mathsf{G} \git \mathsf{G} \arrow[d, "\mathcal{Z}^{\pi}(S^1)"{name=B, right}]\\
\Vect_{\mathbb{C}} && \Vect_{\mathbb{C}}.
\arrow[Rightarrow, shorten <= 1.35em, shorten >= 1.35em, from=A, to=B, "\mathcal{Z}^{\pi}(\Sigma_k)" above]
\end{tikzcd}
};
\end{tikzpicture}
\]
The functors are the identity on morphisms and are given on objects by $s(g,\omega) = g$, $t_1(g,\omega) = \omega^2 g$ and $t_2(g, \omega)= \varsigma (\omega^2 g)^{-1} \varsigma^{-1}$. The equality of these decorated spans, $k=1,2$, recovers condition (ix).

\begin{Def}
A tuple $(A, \hat{\mathfrak{a}}, \langle - \rangle_e, Q)$ for which $(A, \mathfrak{a}, \langle - \rangle_e)$ is a $\mathsf{G}$-Turaev algebra and which satisfies conditions \textnormal{(vi)-(x)} is called an unoriented $\hat{\mathsf{G}}$-Turaev algebra.
\end{Def}

\begin{Prop}
\label{prop:unoriEquivClass}
The orientation twisted pushforward
\[
(-)^{\pi}: \hat{\mathsf{G}} \mhyphen \TFT^{\pi}_{\langle 2, 1\rangle} \rightarrow \TFT_{\langle 2,1\rangle} (\Vect_{\mathbb{C}}(\Grpd))
\]
is an equivalence onto the non-full subcategory of $\TFT_{\langle 2, 1\rangle} (\Vect_{\mathbb{C}}(\Grpd))$ spanned by unoriented $\hat{\mathsf{G}}$-Turaev algebras.
\end{Prop}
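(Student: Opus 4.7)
The plan is to combine the classification in Theorem \ref{thm:2dClassification}(ii) with Proposition \ref{prop:oriEquivClass} and the compatibility in Proposition \ref{prop:orbCompatibility}. Given $\mathcal{Z} \in \hat{\mathsf{G}}\mhyphen\TFT^{\pi}_{\langle 2,1\rangle}$, the theorem identifies $\mathcal{Z}^{\pi}$ with an unoriented Frobenius algebra object in $\Vect_{\mathbb{C}}(\Grpd)$ on $\mathcal{Z}^{\pi}(S^1)$. Restricting $\mathcal{Z}$ to $\mathsf{G}\mhyphen\Cob^{\ori}_{\langle 2,1\rangle}$ along $\mathcal{F}$ and invoking Proposition \ref{prop:orbCompatibility} shows that the underlying Frobenius object in $\Vect_{\mathbb{C}}(\Grpd)$ coincides with the image of $\mathcal{F}^*\mathcal{Z}$ under the oriented pushforward, which by Proposition \ref{prop:oriEquivClass} is a $\mathsf{G}$-Turaev algebra. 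This immediately delivers conditions (i)--(v).

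For conditions (vi)--(x), I would analyze the images of the two generators of $\Cob_{\langle 2,1\rangle}$ not in the image of $\mathcal{F}$, namely the incompatibly oriented cylinder and the crosscap $\mathbb{M}$, decorated by orientation twisted $\mathsf{G}$-bundles. A Seifert--van Kampen calculation combined with Proposition \ref{prop:holonomyDescr} and the identifications \eqref{eq:crosscapGroupoid}, \eqref{eq:kleinGroupoid} identifies the relevant groupoids $\BBun^{\ori}_{\hat{\mathsf{G}}}(-)$ for these cobordisms and for the surfaces implementing the defining relations among them. The twisted cylinder with holonomy $\omega \in \hat{\mathsf{G}}\backslash \mathsf{G}$ gives the extension $\hat{\mathfrak{a}}_{\omega}$ (which is necessarily an antialgebra map by the twisting) and simultaneously enforces (vi) and (vii); the crosscap with holonomy $\varsigma$ gives $Q_{\varsigma} \in A_{\varsigma^2}$ and, together with its equivariance under $\mathsf{G}$-conjugation on the boundary, yields (viii). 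Relations (ix) and (x) are obtained by decorating the two sides of the equality of surfaces \eqref{eq:quadCrossConstr} and its analog with a second crosscap, and computing holonomies around the labeled loops as indicated in the text preceding the proposition.

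For essential surjectivity onto the stated subcategory, given an unoriented $\hat{\mathsf{G}}$-Turaev algebra $(A, \hat{\mathfrak{a}}, \langle - \rangle_e, Q)$, I would apply Proposition \ref{prop:oriEquivClass} to its underlying $\mathsf{G}$-Turaev algebra to obtain an oriented $\mathsf{G}$-equivariant theory $\mathcal{Z}_0$. I would then extend $\mathcal{Z}_0$ across $\mathcal{F}: \mathsf{G}\mhyphen\Cob^{\ori}_{\langle 2,1\rangle} \rightarrow \hat{\mathsf{G}}\mhyphen\Cob^{\pi}_{\langle 2,1\rangle}$ by declaring the value on the decorated twisted cylinder at $\omega$ to be $\hat{\mathfrak{a}}_{\omega}$ and the value on the decorated crosscap at $\varsigma$ to be $Q_{\varsigma}$, and check that (vi)--(x) guarantee the assignment is compatible with all relations. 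Faithfulness and fullness onto the subcategory then reduce, as in the oriented case, to observing that a morphism of pushforwards whose underlying span is the identity amounts precisely to a morphism of the underlying Turaev data compatible with $\hat{\mathfrak{a}}$ and $Q$.

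The main obstacle is formalizing a decorated generators-and-relations presentation of $\hat{\mathsf{G}}\mhyphen\Cob^{\pi}_{\langle 2,1\rangle}$: one needs a sufficiently rich list of generating surfaces (each equipped with compatible orientation twisted bundles) and relations, refining the classical presentation of $\Cob_{\langle 2,1\rangle}$, against which conditions (vi)--(x) can be matched one-to-one. Once such a presentation is established, the identification of $\mathsf{G}$-bundle holonomies on the relation surfaces via $\pi_1$-computations and the gluing behaviour of $\BBun^{\ori}_{\hat{\mathsf{G}}}$ (used already in Theorem \ref{thm:unoriOrbifold}) makes the remaining verifications routine.
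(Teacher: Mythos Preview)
Your proposal is correct and is, in substance, a self-contained unpacking of what the paper's proof defers to the literature. The paper's own argument is much shorter: it cites Tagami, Sweet, and Kapustin--Turzillo for the object-level classification in the pointed setting (i.e., for the decorated generators-and-relations presentation of $\hat{\mathsf{G}}\mhyphen\Cob^{\pi}_{\langle 2,1\rangle}$ that you correctly flag as the main obstacle), then observes directly that $(-)^{\pi}$ is faithful, that fullness onto the subcategory follows because morphisms of Turaev algebras are by definition the identity on underlying spans, and finally invokes the equivalence between pointed and non-pointed theories to pass from the cited pointed result to the unpointed statement. Your sketch of how conditions (vi)--(x) arise from the twisted cylinder, the crosscap, and the relation surfaces matches the discussion the paper gives just before the proposition, and your treatment of fullness and faithfulness agrees with the paper's. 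The only ingredient you do not mention explicitly is the pointed/non-pointed reduction, which the paper uses to import the cited classification; otherwise the two arguments coincide, yours simply spelling out what the references provide.
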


\begin{proof}
For special $\hat{\mathsf{G}}$, the proposition was proved at the level of objects in \cite[Theorem 3.11]{tagami2012}, \cite[Proposition 3.2.7]{sweet2013}. For the general case in the pointed setting, see \cite[Proposition 1]{kapustin2017}. The functor $(-)^{\hat{\phi}}$ is faithful. Since morphisms of $\mathsf{G}$-Turaev algebras are the identity on underlying spans, $(-)^{\hat{\phi}}$ is also full when regarded as a functor to unoriented $\hat{\mathsf{G}}$-Turaev algebras. Now use the equivalence of pointed and non-pointed theories.
\end{proof}

Unoriented $\hat{\mathsf{G}}$-Turaev algebras were discovered by Kapustin--Turzillo \cite[\S 4.2]{kapustin2017}, who argued along the lines of Turaev's derivation of $\mathsf{G}$-Turaev algebras. See \cite{tagami2012}, \cite{sweet2013} for earlier work in this direction. The above discussion shows that unoriented $\hat{\mathsf{G}}$-Turaev algebras are simply unoriented Frobenius algebras in $\Vect_{\mathbb{C}}(\Grpd)$ whose underlying Frobenius algebra is $\mathsf{G}$-Turaev. It is surprising that there are no additional conditions, analogous to (iv) and (v) in the oriented case, which must be added by hand to arrive at the definition of an unoriented $\hat{\mathsf{G}}$-Turaev algebra.

A $\mathsf{G}$-Turaev algebra $A$ defines, in particular, a flat vector bundle on the loop groupoid $\Lambda B \mathsf{G}$, the fibre over $g \in \Lambda B \mathsf{G}$ being $A_g$ and the parallel transport along $h$ being induced by $\mathfrak{a}_h$. This perspective is emphasized in \cite{shklyarov2008}. In the same vein, an unoriented $\hat{\mathsf{G}}$-Turaev algebra defines an extension of $A$ to a flat vector bundle on the unoriented loop groupoid $\Lambda_{\pi}^{\refl} B \hat{\mathsf{G}}$. This geometric perspective makes the orientation twisted orbifold procedure particularly natural.

\begin{Prop}
\label{prop:unoriOrbifold}
The orbifold of $\mathcal{Z} \in \hat{\mathsf{G}} \mhyphen \TFT^{\pi}_{\langle 2,1 \rangle}$ is the unoriented Frobenius algebra whose underlying commutative Frobenius algebra is the space of flat sections $\Gamma_{\Lambda B \mathsf{G}}(A)$, whose involution $p$ is induced by the extension of $A$ to $\Lambda_{\pi}^{\refl} B \hat{\mathsf{G}}$ and whose crosscap is the flat section given by the assignment
\[
g \mapsto \sum_{\{\varsigma \in \hat{\mathsf{G}} \backslash \mathsf{G} \mid \varsigma^2 = g \}} Q_{\varsigma}, \qquad g \in \mathsf{G}.
\]
\end{Prop}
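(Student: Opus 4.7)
The plan is to show that the orbifold $\mathcal{Z}^{\pi,\textnormal{orb}} = \Par \circ \mathcal{Z}^{\pi}$, viewed as an unoriented two-dimensional topological field theory, corresponds under Theorem \ref{thm:2dClassification}(ii) to the unoriented Frobenius algebra in the statement. By that classification, it suffices to evaluate $\mathcal{Z}^{\pi,\textnormal{orb}}$ on the generators of $\Cob_{\langle 2,1 \rangle}$, namely those in the image of $\mathcal{F}: \Cob_{\langle 2,1 \rangle}^{\ori} \rightarrow \Cob_{\langle 2,1 \rangle}$ together with the reflection cylinder and the crosscap $\mathbb{M}$, and to identify the resulting data with the underlying commutative Frobenius algebra, the involution and the crosscap asserted in the statement.

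The underlying commutative Frobenius algebra is handled by restricting to oriented cobordisms. By Proposition \ref{prop:orbCompatibility}, the composition $\mathcal{F}^* \mathcal{Z}^{\pi,\textnormal{orb}}$ coincides with the oriented orbifold of the underlying oriented theory $\mathcal{F}^* \mathcal{Z}$. Its state space is $\Par(\mathcal{R}_{\mathcal{Z}}^{S^1}) = \left( \bigoplus_g A_g \right)^{\mathsf{G}} = \Gamma_{\Lambda B \mathsf{G}}(A)$, and its Frobenius structure is obtained by pushforward along the spans associated to the pair of pants, cap and cup, reproducing the construction of Schweigert--Woike from the $\mathsf{G}$-Turaev algebra $(A, \mathfrak{a}, \langle - \rangle_e)$ (cf.\ Proposition \ref{prop:oriEquivClass}).

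For the involution $p$ one evaluates $\mathcal{Z}^{\pi}$ on the reflection cylinder $C$. Using the homotopy-fibre description of $\BBun_{\hat{\mathsf{G}}}^{\ori}(C)$ together with Proposition \ref{prop:holonomyDescr}, the underlying span has both legs into $\mathsf{G} \git \mathsf{G}$, with target sending a holonomy $g$ to $\varsigma g^{-1} \varsigma^{-1}$ for some $\varsigma \in \hat{\mathsf{G}} \setminus \mathsf{G}$ arising from the reflected boundary framing; the vector-bundle morphism between the fibres is given by $\hat{\mathfrak{a}}_{\varsigma}: A_g \rightarrow A_{\varsigma g^{-1} \varsigma^{-1}}$. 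Applying $\Par$ yields a linear endomorphism of $\Gamma_{\Lambda B \mathsf{G}}(A)$ which, by $\mathsf{G}$-invariance of flat sections, is independent of the choice of $\varsigma$ and is precisely the involution induced by the $\hat{\mathsf{G}} \setminus \mathsf{G}$-sector of the extension of $A$ to $\Lambda_{\pi}^{\refl} B \hat{\mathsf{G}}$. For the crosscap, equations \eqref{eq:ccSpan} and \eqref{eq:crosscapGroupoid} present $\mathcal{Z}^{\pi}(\mathbb{M})$ as the span $\pt \leftarrow (\hat{\mathsf{G}} \setminus \mathsf{G}) \git \mathsf{G} \rightarrow \mathsf{G} \git \mathsf{G}$ with target $\varsigma \mapsto \varsigma^2$, decorated by $\varsigma \mapsto Q_{\varsigma} \in A_{\varsigma^2}$; pushforward along the target yields the claimed flat section $g \mapsto \sum_{\varsigma^2 = g} Q_{\varsigma}$, whose $\mathsf{G}$-invariance is guaranteed by property (viii) of an unoriented $\hat{\mathsf{G}}$-Turaev algebra.

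The hard part will be identifying the span and decoration for the reflection cylinder: one must unpack the boundary framing data carefully to see why elements of $\hat{\mathsf{G}} \setminus \mathsf{G}$ (and not merely of $\mathsf{G}$) enter, which ultimately reflects the $\pi$-twisted mismatch between the two boundary orientation covers once they are identified via the reflection. Once the values on the six generators are established, no further verification of the unoriented Frobenius axioms is needed, since $\mathcal{Z}^{\pi,\textnormal{orb}}$ is a symmetric monoidal pseudofunctor and Theorem \ref{thm:2dClassification}(ii) packages all the relations of $\Cob_{\langle 2,1 \rangle}$ automatically.
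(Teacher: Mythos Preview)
Your proposal is correct and follows essentially the same route as the paper: invoke the oriented orbifold result (here via Proposition~\ref{prop:orbCompatibility}, in the paper via a direct citation of Schweigert--Woike) for the underlying commutative Frobenius algebra, then compute $\Par$ on the crosscap span \eqref{eq:ccSpan} and on the reflection cylinder. The only place the paper is more explicit is the crosscap step, where it writes the pushforward $t_*$ as a groupoid integral over the homotopy fibre $Rt^{-1}(g)=\{(\varsigma,k):k\varsigma^2k^{-1}=g\}$ and uses condition~(viii) to collapse $\frac{1}{|\mathsf{G}|}\sum_{(\varsigma,k)}\mathfrak{a}_k(Q_\varsigma)$ to $\sum_{\varsigma^2=g}Q_\varsigma$; your one-line ``pushforward yields the claimed section'' is correct but tacitly uses exactly this simplification.
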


\begin{proof}
That the underlying commutative Frobenius algebra of $\mathcal{Z}^{\pi, \textnormal{orb}}$ is $\Gamma_{\Lambda B \mathsf{G}}(A)$ was proved in \cite[Theorem 4.9]{schweigert2019}; see also \cite[Proposition 2.1]{kaufmann2003}. Keeping the notation of diagram \eqref{eq:ccSpan}, we find that the component of the charge of the crosscap at $g \in \Lambda B \mathsf{G}$ is
\begin{eqnarray*}
\Par(\mathcal{Z}^{\pi}(\mathbb{M}))(\mathbf{1})(g)
&=& \int_{(\varsigma, k) \in Rt^{-1}(g)} \mathcal{Z}^{\mathsf{G} \rightarrow \pt}(S^1)(k)(Q_{\varsigma})\\
&=&
\sum_{\{(\varsigma, k) \in (\hat{\mathsf{G}} \backslash \mathsf{G}) \times \mathsf{G} \mid k \varsigma^2 k^{-1} = g \}} \frac{\mathfrak{a}_k(Q_{\varsigma})}{\vert \mathsf{G} \vert} \\
&\overset{\scriptstyle \textnormal{Cond. (viii)}}{=}&
\sum_{\{(\varsigma, k) \in (\hat{\mathsf{G}} \backslash \mathsf{G}) \times \mathsf{G} \mid k \varsigma^2 k^{-1} = g \}} \frac{Q_{k \varsigma k^{-1}}}{\vert \mathsf{G} \vert} \\
&=&
\sum_{\{\varsigma \in \hat{\mathsf{G}} \backslash \mathsf{G} \mid \varsigma^2 = g \}} Q_{\varsigma}.
\end{eqnarray*}
Here $\mathbf{1}$ is the unit section of the trivial line bundle and $\int$ denotes integration with respect to groupoid cardinality, as in \cite[\S 2.1]{willerton2008}. The first equality follows from the explicit description of $\Par$ given in \cite[Remarks 3.18(a)]{schweigert2019}. For the second equality we have used that $\mathfrak{a}_k$ is the value of $\mathcal{Z}^{\mathsf{G} \rightarrow \pt}(S^1)$ on the morphism $(-) \rightarrow k(-) k^{-1}$ in $\Lambda B \mathsf{G}$.

A similar calculation shows that the involution of $\Gamma_{\Lambda B \mathsf{G}}(A)$ is given by the restriction of the anti-automorphism $\hat{\mathfrak{a}}_{\varsigma}$, for any choice of  $\varsigma \in \hat{\mathsf{G}} \backslash \mathsf{G}$.
\end{proof}

\section{Twisted unoriented Dijkgraaf--Witten theory}
\label{sec:twistUnoriDW}

We use the results of the previous sections to construct unoriented lifts of twisted Dijkgraaf--Witten theory. We study this theory in detail in dimensions one and two. For related discussions in the oriented case, see \cite{dijkgraaf1990}, \cite{freed1993}, \cite{freed1994}, \cite{freed2010}, \cite{morton2015}, \cite{muller2018}.

\subsection{Construction}
\label{sec:unoriDWConstr}

Given a finite group $\mathsf{G}$ and an $n$-cocycle $\lambda \in Z^n(B \mathsf{G}; \mathsf{U}(1))$, denote by $\mathcal{Z}_{\mathsf{G}}^{\lambda} \in \TFT_{\langle n, n-1, n-2 \rangle}^{\ori}$ the associated oriented Dijkgraaf--Witten theory, as constructed in \cite[Definition 5.1]{muller2018}.

\begin{Def}
Let $\hat{\mathsf{G}}$ be a finite $\mathbb{Z}_2$-graded group and let $\hat{\lambda} \in Z^n(B \hat{\mathsf{G}}; \mathsf{U}(1)_{\pi})$. The unoriented topological quantum field theory
\[
\mathcal{Z}_{\hat{\mathsf{G}}}^{\hat{\lambda}} : \Cob_{\langle n, n-1, n-2 \rangle} \rightarrow 2 \Vect_{\mathbb{C}}
\]
given by the orbifold of the orientation twisted theory $\mathcal{P}_{\hat{\mathsf{G}}}^{\hat{\lambda}}$ of Corollary \ref{cor:primEquivTheory} is called the twisted unoriented Dijkgraaf--Witten theory associated to the pair $(\hat{\mathsf{G}}, \hat{\lambda})$.
\end{Def}

We can now state our main result, which justifies our naming of $\mathcal{Z}_{\hat{\mathsf{G}}}^{\hat{\lambda}}$.

\begin{Thm}
\label{thm:unoriDW}
The theory $\mathcal{Z}_{\hat{\mathsf{G}}}^{\hat{\lambda}}$ is an unoriented lift of $\mathcal{Z}_{\mathsf{G}}^{\lambda}$.
\end{Thm}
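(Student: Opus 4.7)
The plan is to combine Theorem \ref{thm:unoriPrim}, which provides an equivalence $\mathcal{F}(\mathcal{P}_{\hat{\mathsf{G}}}^{\hat{\lambda}}) \simeq \mathcal{P}_{\mathsf{G}}^{\lambda}$ of primitive theories on restriction to oriented cobordisms, with Proposition \ref{prop:orbCompatibility}, which intertwines the oriented and orientation twisted orbifold constructions under the same restriction. The first task is to unpack the two theories in question as iterated compositions. By construction,
\[
\mathcal{Z}^{\hat{\lambda}}_{\hat{\mathsf{G}}} = \Par \circ (\mathcal{P}_{\hat{\mathsf{G}}}^{\hat{\lambda}})^{\pi},
\]
where $\pi$ is read as the morphism $\hat{\mathsf{G}} \rightarrow (\mathbb{Z}_2 \xrightarrow[]{\id} \mathbb{Z}_2)$ to the terminal object of $\Grp_{\slash \mathbb{Z}_2}$; under Proposition \ref{prop:reductionToUnoriented} this is an unoriented theory on $\Cob_{\langle n, n-1, n-2 \rangle}$. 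Correspondingly, the oriented theory $\mathcal{Z}^{\lambda}_{\mathsf{G}}$ of \cite{muller2018} is, by the once-extended pushforward construction of Schweigert--Woike, canonically equivalent to the $\mathsf{G}$-orbifold $\Par \circ (\mathcal{P}^{\lambda}_{\mathsf{G}})^{\phi}$, with $\phi: \mathsf{G} \rightarrow \pt$ the unique homomorphism to the trivial group.

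Next I would apply Proposition \ref{prop:orbCompatibility} to the morphism $\hat{\phi} = \pi$, whose underlying ungraded morphism is $\phi$, and evaluate the three compatibility squares at $\mathcal{P}_{\hat{\mathsf{G}}}^{\hat{\lambda}}$. Chasing $\mathcal{P}_{\hat{\mathsf{G}}}^{\hat{\lambda}}$ around the outer rectangle in two ways, and then invoking Theorem \ref{thm:unoriPrim} to identify $\mathcal{F}(\mathcal{P}^{\hat{\lambda}}_{\hat{\mathsf{G}}})$ with $\mathcal{P}^{\lambda}_{\mathsf{G}}$, produces a chain of natural equivalences
\[
\mathcal{F}(\mathcal{Z}^{\hat{\lambda}}_{\hat{\mathsf{G}}}) \simeq \Par \circ \mathcal{F}\bigl((\mathcal{P}^{\hat{\lambda}}_{\hat{\mathsf{G}}})^{\pi}\bigr) \simeq \Par \circ \bigl(\mathcal{F}(\mathcal{P}^{\hat{\lambda}}_{\hat{\mathsf{G}}})\bigr)^{\phi} \simeq \Par \circ (\mathcal{P}^{\lambda}_{\mathsf{G}})^{\phi} \simeq \mathcal{Z}^{\lambda}_{\mathsf{G}}.
\]
This is precisely the statement that the oriented restriction of the unoriented theory $\mathcal{Z}^{\hat{\lambda}}_{\hat{\mathsf{G}}}$ is $\mathcal{Z}^{\lambda}_{\mathsf{G}}$, so $\mathcal{Z}^{\hat{\lambda}}_{\hat{\mathsf{G}}}$ is an unoriented lift in the sense of the statement.

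The principal bookkeeping hurdle is verifying the implicit identification of the direct construction of $\mathcal{Z}^{\lambda}_{\mathsf{G}}$ in \cite[Definition 5.1]{muller2018} with the $\mathsf{G}$-orbifold $\Par \circ (\mathcal{P}^{\lambda}_{\mathsf{G}})^{\phi}$ coming from the framework of \cite{schweigert2019}, \cite{schweigert2018}. This requires matching the two constructions on objects, $1$-morphisms and $2$-morphisms of $\mathsf{G} \mhyphen \Cob^{\ori}_{\langle n, n-1, n-2 \rangle}$, and tracing coherence data. Once this is in place, the remainder is formal assembly of Theorem \ref{thm:unoriPrim} and Proposition \ref{prop:orbCompatibility}, with no further homotopical input required.
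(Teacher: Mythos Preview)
Your proposal is correct and follows essentially the same approach as the paper: invoke the fact that $\mathcal{P}_{\hat{\mathsf{G}}}^{\hat{\lambda}}$ lifts $\mathcal{P}_{\mathsf{G}}^{\lambda}$ (the paper cites Corollary \ref{cor:primEquivTheory}, which is just Theorem \ref{thm:unoriPrim} specialized to $\hat{T}=\mathbf{B}\hat{\mathsf{G}}$) and then apply the object-level statement of Proposition \ref{prop:orbCompatibility}. Your ``principal bookkeeping hurdle'' is not actually a hurdle: in \cite[Definition 5.1]{muller2018} the oriented theory $\mathcal{Z}_{\mathsf{G}}^{\lambda}$ is \emph{defined} as $\Par \circ (\mathcal{P}_{\mathsf{G}}^{\lambda})^{\phi}$, so no separate identification is needed.
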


\begin{proof}
The oriented theory $\mathcal{Z}_{\mathsf{G}}^{\lambda}$ was constructed in \cite{muller2018} as the $\mathsf{G}$-orbifold of $\mathcal{P}_{\mathsf{G}}^{\lambda}$. That $\mathcal{Z}_{\hat{\mathsf{G}}}^{\hat{\lambda}}$ is an unoriented lift of $\mathcal{Z}_{\mathsf{G}}^{\lambda}$ therefore follows by applying Corollary \ref{cor:primEquivTheory} and the object-level statement of Proposition \ref{prop:orbCompatibility}.
\end{proof}

\subsection{Relation with orientation twisted transgression}

As expected from the point of view of path integral (or cohomological) quantization, the values of $\mathcal{Z}_{\hat{\mathsf{G}}}^{\hat{\lambda}}$ can be expressed in terms of appropriate pushforwards of $\hat{\lambda}$. In the present setting, the following result shows that the relevant pushforward is the orientation twisted transgression map of Section \ref{sec:unoriTrans}.

\begin{Thm}
\phantomsection
\label{thm:closedPartition}
\begin{enumerate}[label=(\roman*)]
\item For $Z$ a closed $n$-manifold, we have
\[
\mathcal{Z}_{\hat{\mathsf{G}}}^{\hat{\lambda}}(Z)
=
\int_{\BBun_{\hat{\mathsf{G}}}^{\ori}(Z)} \uptau^{\ori}_Z(\hat{\lambda}).
\]

\item For $Y$ a closed $(n-1)$-manifold, we have
\[
\mathcal{Z}_{\hat{\mathsf{G}}}^{\hat{\lambda}}(Y)
\simeq
\Gamma_{\BBun_{\hat{\mathsf{G}}}^{\ori}(Y)}(\uptau_Y^{\ori}(\hat{\lambda})_{\mathbb{C}}),
\]
where $\uptau_Y^{\ori}(\hat{\lambda})_{\mathbb{C}}$ is the flat complex line bundle on $\BBun_{\hat{\mathsf{G}}}^{\ori}(Y)$ determined by the $1$-cocycle $\uptau_Y^{\ori}(\hat{\lambda})$.

\item For $X$ a closed $(n-2)$-manifold, we have
\[
\mathcal{Z}_{\hat{\mathsf{G}}}^{\hat{\lambda}}(X)
\simeq
\Vect_{\mathbb{C}}^{\uptau_X^{\ori}(\hat{\lambda})}(\BBun_{\hat{\mathsf{G}}}^{\ori}(X)),
\]
where the right hand side is the category of $\uptau_X^{\ori}(\hat{\lambda})$-twisted complex vector bundles on $\BBun_{\hat{\mathsf{G}}}^{\ori}(X)$.
\end{enumerate}
\end{Thm}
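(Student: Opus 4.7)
The plan is to compute $\mathcal{Z}_{\hat{\mathsf{G}}}^{\hat{\lambda}} = \Par \circ \mathcal{A}_{\hat{\mathsf{G}}}^{\hat{\lambda}}$, where $\mathcal{A}_{\hat{\mathsf{G}}}^{\hat{\lambda}} = (\mathcal{P}_{\hat{\mathsf{G}}}^{\hat{\lambda}})^{\pi}$ is the orientation twisted $\pi$-pushforward constructed in Section \ref{sec:orbifolding}. The computation proceeds in three steps: first identify the spans of groupoids underlying $\mathcal{A}_{\hat{\mathsf{G}}}^{\hat{\lambda}}$ applied to $X$, $Y$ and $Z$; then identify the $2$-vector bundle data decorating these spans with the orientation twisted transgression of $\hat{\lambda}$; and finally apply $\Par$. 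For the first identification, by the definition of the pushforward, the groupoid underlying $\mathcal{A}_{\hat{\mathsf{G}}}^{\hat{\lambda}}(X,\ori_X)$ is $R\Ind_{\pi}^{-1}(X, \ori_X)$, which by Proposition \ref{prop:mappingSpaceDescr} and Lemma \ref{lem:2Fibre} is equivalent to $\BBun_{\hat{\mathsf{G}}}^{\ori}(X)$. The analogous computation for $Y$ yields a span from a point to a point whose apex is $\BBun_{\hat{\mathsf{G}}}^{\ori}(Y)$, and for $Z$ a span of spans whose double apex is $\BBun_{\hat{\mathsf{G}}}^{\ori}(Z)$.

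For the second step, the key observation is that invertibility of $\mathcal{P}_{\hat{\mathsf{G}}}^{\hat{\lambda}}$ (Theorem \ref{thm:unoriPrim}) forces the decorations to be invertible: a $2$-line bundle on $\BBun_{\hat{\mathsf{G}}}^{\ori}(X)$, a line bundle on $\BBun_{\hat{\mathsf{G}}}^{\ori}(Y)$, and a $\mathsf{U}(1)$-valued locally constant function on $\BBun_{\hat{\mathsf{G}}}^{\ori}(Z)$. To identify these with transgressions of $\hat{\lambda}$, I would unpack the explicit construction from Section \ref{sec:primTheories}: the value of $\mathcal{P}_{\hat{\mathsf{G}}}^{\hat{\lambda}}$ at an equivalence of maps on $W \times I$ (respectively $W \times I^2$) is given by the pairing $\langle \eta(\varphi^*\hat{\lambda}), c_{W \times I} \rangle$ of equation \eqref{eq:primTheory2Mor}. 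On the other hand, $\uptau_W^{\ori}(\hat{\lambda})$ is defined as the pushforward along $\pr_1$ of $(\ev_{\hat{\mathsf{G}}}^{\ori})^* \hat{\lambda}$ after application of the coefficient isomorphism $\mathfrak{i}$ of Section \ref{sec:unoriTrans}, which amounts to integrating the pulled-back cocycle against $\ori_W$-twisted fundamental chains coming from the universal bundle on $\BBun_{\hat{\mathsf{G}}}^{\ori}(W) \times W$. A direct comparison, using the equivalence $\BBun_{\hat{\mathsf{G}}}^{\ori}(W) \simeq \MMap_{\mathbf{B}\mathbb{Z}_2}(W, \mathbf{B}\hat{\mathsf{G}})$ of Proposition \ref{prop:mappingSpaceDescr} and the identification of $\pr_2^*\ori_W$ from Proposition \ref{prop:pullbackTwist}, shows that the two recipes yield cohomologous cocycles.

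The third step is to evaluate $\Par$ on the resulting decorated spans. For $X$, $\Par$ sends a $2$-line bundle classified by a $2$-cocycle $\mu$ to the category $\Vect_{\mathbb{C}}^{\mu}$ of $\mu$-twisted vector bundles, yielding (iii); for $Y$, $\Par$ applied to a line bundle on a point-to-point span returns the vector space of flat sections, yielding (ii); and for $Z$, $\Par$ of a locally constant $\mathsf{U}(1)$-valued function on a span of spans computes the integral with respect to groupoid cardinality, exactly as in the proof of Proposition \ref{prop:unoriOrbifold} (see also \cite[\S 2.1]{willerton2008}), yielding (i).

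The main obstacle is step two: carefully verifying that the $\hat{\lambda}$-dependent data appearing in the coend construction of $\mathcal{P}_{\hat{\mathsf{G}}}^{\hat{\lambda}}$ reassembles, after orbifold pushforward, into exactly the pullback–pushforward recipe defining $\uptau^{\ori}$. In the oriented case, the analogous comparison between Dijkgraaf--Witten partition functions and transgressed cocycles is classical (see \cite{freed1993}, \cite{morton2015}, \cite{muller2018}), and the arguments there adapt; the new content here is the bookkeeping of the coefficient isomorphism $\mathfrak{i}$ that converts $(\ev_{\hat{\mathsf{G}}}^{\ori})^*\mathbb{C}^{\times}_{Bi}$ into $\mathbb{C}^{\times}_{\ori_W}$, and the verification that the homotopy classes $h$ implicit in the definition of $\mathcal{P}_{\hat{\mathsf{G}}}^{\hat{\lambda}}(X,f;h)$ agree, under the equivalence $\BBun_{\hat{\mathsf{G}}}^{\ori}(W) \simeq \MMap_{\mathbf{B}\mathbb{Z}_2}(W, \mathbf{B}\hat{\mathsf{G}})$, with the framings used in constructing $\ev^{\ori}_{\hat{\mathsf{G}}}$. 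No new conceptual input is required beyond Propositions \ref{prop:pullbackTwist} and \ref{prop:diffTransgression}.
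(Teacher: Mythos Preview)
Your approach is essentially the same as the paper's: identify the underlying groupoids as $\BBun_{\hat{\mathsf{G}}}^{\ori}(-)$, identify the decorations coming from $\mathcal{P}_{\hat{\mathsf{G}}}^{\hat{\lambda}}$ with orientation twisted transgressions of $\hat{\lambda}$, and then apply $\Par$. Two places where the paper is more explicit than your sketch: for part (ii) it carries out the comparison by passing to the adjoint $H^{\textnormal{adj}}: I \to \MMap_{\mathbf{B}\mathbb{Z}_2}(Y,\mathbf{B}\hat{\mathsf{G}})$ of a morphism $H$ and using the resulting commutative triangle with $\ev_{\hat{\mathsf{G}}}^{\ori}$ to obtain an on-the-nose equality $\mathcal{P}_{\hat{\mathsf{G}}}^{\hat{\lambda}}(Y)(H)=\uptau_Y^{\ori}(\hat{\lambda})(H)$ rather than mere cohomology; and for part (iii) the paper singles out as the one genuinely new ingredient the asphericity of $\Map_{\mathbf{B}\mathbb{Z}_2}(X,\mathbf{B}\hat{\mathsf{G}})$ from Lemma~\ref{lem:asphericalMappingGrpd}, which is what lets you replace the $2$-groupoid $\MMap^{\leq 2}_{\mathbf{B}\mathbb{Z}_2}(X,\mathbf{B}\hat{\mathsf{G}})$ carrying $\mathcal{R}^X_{\mathcal{Z}}$ by the ordinary groupoid $\BBun_{\hat{\mathsf{G}}}^{\ori}(X)$ so that the decoration is honestly classified by a $2$-cocycle. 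You should cite that lemma rather than Proposition~\ref{prop:diffTransgression}, which plays no role here.
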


\begin{Rem}
In view of the compatibility of oriented and orientation twisted transgression (see Proposition \ref{prop:oriVsOriTwistTrans}), Theorem \ref{thm:closedPartition} specializes to the known expressions \cite[\S 4.3]{muller2018} for $\mathcal{Z}_{\mathsf{G}}^{\lambda}$ on closed oriented manifolds of codimension at most two.
\end{Rem}

\begin{proof}[Proof of Theorem \ref{thm:closedPartition}.]
Let $Z$ be a closed $n$-manifold. Then $\mathcal{P}_{\hat{\mathsf{G}}}^{\hat{\lambda}}(Z)$ is an intertwiner of the identity morphism of the trivial $2$-line bundle on $\BBun_{\hat{\mathsf{G}}}^{\ori}(Z)$. Unravelling definitions, we find that this is the data of a locally constant function on $\BBun_{\hat{\mathsf{G}}}^{\ori}(Z)$. The construction of $\mathcal{P}_{\hat{\mathsf{G}}}^{\hat{\lambda}}(Z)$ gives
\[
\mathcal{P}_{\hat{\mathsf{G}}}^{\hat{\lambda}}(Z)(F;H)
=
\langle H(F^* \hat{\lambda}), c_Z\rangle = \uptau_Z^{\ori}(\hat{\lambda})(F;H)
\]
for each $(F;H) \in \BBun_{\hat{\mathsf{G}}}^{\ori}(Z)$. We then find
\[
\mathcal{Z}_{\hat{\mathsf{G}}}^{\hat{\lambda}}(Z) = \sum_{(F;H) \in \pi_0(\BBun_{\hat{\mathsf{G}}}^{\ori}(Z))} \frac{\uptau_Z^{\ori}(\hat{\lambda})(F;H)}{\vert \Aut_{\BBun_{\hat{\mathsf{G}}}^{\ori}(Z)}(F;H) \vert} = \int_{\BBun_{\hat{\mathsf{G}}}^{\ori}(Z)} \uptau^{\ori}_Z(\hat{\lambda}),
\]
as claimed.

Let now $Y$ be a closed $(n-1)$-manifold. Then $\mathcal{P}_{\hat{\mathsf{G}}}^{\hat{\lambda}}(Y)$ is a $1$-endomorphism of the trivial $2$-line bundle on $\BBun_{\hat{\mathsf{G}}}^{\ori}(Y)$ and so can be interpreted as an element of $Z^1(\BBun_{\hat{\mathsf{G}}}^{\ori}(Y) ; \mathsf{U}(1))$. Let $H: (f_1; h_1) \rightarrow (f_2; h_2)$ be a morphism in $\MMap_{\mathbf{B} \mathbb{Z}_2}(Y, \mathbf{B} \hat{\mathsf{G}}) \simeq \BBun_{\hat{\mathsf{G}}}^{\ori}(Y)$. Note that $H$ determines a $1$-chain on $\BBun_{\hat{\mathsf{G}}}^{\ori}(Y)$. As in the proof of Proposition \ref{prop:vbMappingSpace}, the choice of a homotopy from $h_2 * (\pi \circ H)$ to $h_1$ induces a homotopy $G$ from $\pi \circ H$ to $\ori_{Y \times I}$. There is a commutative diagram
\[
\begin{tikzpicture}[baseline= (a).base]
\node[scale=1] (a) at (0,0){
\begin{tikzcd}[column sep=3.0em, row sep=2.0em]
Y \times I \arrow{r}[above]{H} \arrow{d}[left]{\id_Y \times H^{\textnormal{adj}}} & \mathbf{B} \hat{\mathsf{G}} \\
Y \times \MMap_{\mathbf{B} \mathbb{Z}_2}(Y, \mathbf{B} \hat{\mathsf{G}}) \arrow{ru}[below right]{\ev_{\hat{\mathsf{G}}}^{\ori} \circ \textnormal{swap}} & {}
\end{tikzcd}
};
\end{tikzpicture}
\]
where $H^{\textnormal{adj}}$ is the adjoint of $H$. Fix a fundamental cycle $c_Y$ of $Y$, thereby identifying $\mathcal{P}_{\hat{\mathsf{G}}}^{\hat{\lambda}}(Y, f_k; h_k)$ with $\mathbb{C}$, $k=1,2$. With this notation, we compute (\textit{cf.} \cite[\S 4.3]{muller2018})
\begin{eqnarray*}
\mathcal{P}_{\hat{\mathsf{G}}}^{\hat{\lambda}}(Y)(H)
&=&
\langle G(H^* \hat{\lambda}), (-1)^{n-1} c_Y \times I \rangle \\
&=&
\langle \ev_{\hat{\mathsf{G}}}^{\ori *} \hat{\lambda} , (-1)^{n-1} \textnormal{swap}_*(c_Y \times H^{\textnormal{adj}}(I) )\rangle \\
&=&
\langle \ev_{\hat{\mathsf{G}}}^{\ori *} \hat{\lambda} ,H^{\textnormal{adj}}(I) \times c_Y \rangle \\
&=&
\uptau_Y^{\ori}(\hat{\lambda})(H),
\end{eqnarray*}
where we have used the map $G$ to identify $\ev_{\hat{\mathsf{G}}}^{\ori *} \hat{\lambda}$ with an $\ori_{Y \times I}$-twisted $n$-cocycle on $\BBun_{\hat{\mathsf{G}}}^{\ori}(Y \times I)$. Finally, the definition of $\Par$ shows that the vector space $\mathcal{Z}_{\hat{\mathsf{G}}}^{\hat{\lambda}}(Y)$ is naturally identified with the space of flat sections of $\mathcal{P}_{\hat{\mathsf{G}}}^{\hat{\lambda}}(Y) = \uptau_Y^{\ori}(\hat{\lambda})$.

The computation of $\mathcal{Z}_{\hat{\mathsf{G}}}^{\hat{\lambda}}(X)$, for $X$ a closed $(n-2)$-manifold, is very similar to that in the oriented case \cite[Theorem 4.5]{muller2018}, and we will not repeat it here. The only new ingredient is the asphericity of the relative mapping space $\Map_{\mathbf{B} \mathbb{Z}_2}(X, \mathbf{B} \hat{\mathsf{G}})$, which was established in Lemma \ref{lem:asphericalMappingGrpd}.
\end{proof}

\begin{Rem}
More generally, Proposition \ref{prop:diffTransgression} can be used to express all values of the non-extended truncation of $\mathcal{Z}_{\hat{\mathsf{G}}}^{\hat{\lambda}}$. Turned around, Proposition \ref{prop:diffTransgression} can be used to give an \textit{a priori} construction of the non-extended truncation of $\mathcal{Z}_{\hat{\mathsf{G}}}^{\hat{\lambda}}$.
\end{Rem}

The expression from part (i) of Theorem \ref{thm:closedPartition} can be rewritten in terms of orientation twisted $\mathsf{G}$-bundles as
\begin{equation}
\label{eq:DWPartitionFunction}
\mathcal{Z}_{\hat{\mathsf{G}}}^{\hat{\lambda}}(Z)
=
\sum_{[(P,\epsilon)] \in \pi_0(\BBun_{\hat{\mathsf{G}}}^{\ori}(Z))} \frac{\langle \epsilon(f_P^*\hat{\lambda}), [Z] \rangle}{\vert \Aut_{\BBun_{\hat{\mathsf{G}}}^{\ori}(Z)}(P, \epsilon) \vert}.
\end{equation}
Using Proposition \ref{prop:holonomyDescr}, this can also be written in terms of representations of the fundamental group of $Z$ on $\hat{\mathsf{G}}$, generalizing a familiar formula from the oriented case:
\[
\mathcal{Z}_{\hat{\mathsf{G}}}^{\hat{\lambda}}(Z) = \frac{1}{\vert \mathsf{G} \vert} \sum_{\rho \in \Hom_{\Grp}^{\ori_Z}(\pi_1(Z), \hat{\mathsf{G}})}
\frac{\langle f_{P_{\rho}}^*\hat{\lambda}, [Z] \rangle}{\vert \Aut_{\mathsf{G}}(\rho) \vert}.
\]

Using the results of Willerton \cite[\S 2]{willerton2008}, we also find that
\[
\dim_{\mathbb{C}} \mathcal{Z}_{\hat{\mathsf{G}}}^{\hat{\lambda}}(Y) = \int_{\Lambda \BBun_{\hat{\mathsf{G}}}^{\ori}(Y)} \uptau_{S^1} \uptau_Y^{\ori} (\hat{\lambda})
\]
and
\[
K_0(\mathcal{Z}_{\hat{\mathsf{G}}}^{\hat{\lambda}}(X)) \otimes_{\mathbb{Z}} \mathbb{C} \simeq \Gamma_{\Lambda \BBun_{\hat{\mathsf{G}}}^{\ori}(X)} (\uptau_{S^1} \uptau_X^{\ori} (\hat{\lambda})_{\mathbb{C}}),
\]
the latter of which has dimension $\int_{\Lambda^2 \BBun_{\hat{\mathsf{G}}}^{\ori}(X)} \uptau_{\mathbb{T}^2} \uptau_X^{\ori} (\hat{\lambda})$.

\begin{Ex}
Consider the trivial Real structure, $\hat{\mathsf{G}} =\mathsf{G} \times \mathbb{Z}_2$, and the trivial cocycle, $\hat{\lambda} =1$. By combining Proposition \ref{prop:splitFrBun} and Theorem \ref{thm:closedPartition}, we find, for example,
\[
\mathcal{Z}_{\hat{\mathsf{G}} = \mathsf{G} \times \mathbb{Z}_2}^{\hat{\lambda}=1}(Z) = \frac{\vert \Hom_{\Grp}(\pi_1(Z), \mathsf{G}) \vert}{\vert \mathsf{G} \vert},
\qquad
\mathcal{Z}_{\hat{\mathsf{G}} = \mathsf{G} \times \mathbb{Z}_2}^{\hat{\lambda}=1}(Y)
\simeq
\bigoplus_{[P] \in \pi_0(\BBun_{\mathsf{G}}(Y))} \mathbb{C} \cdot [P].
\]
In fact, inspection of the definitions shows that the orientation twisted pushforward of $\mathcal{P}_{\hat{\mathsf{G}} = \mathsf{G} \times \mathbb{Z}_2}^{\hat{\lambda}=1}$ is simply $\BBun_{\mathsf{G}}(-)$, viewed as a symmetric monoidal pseudofunctor $\Cob_{\langle n, n-1, n-2 \rangle} \rightarrow \mathsf{Span}(\Grpd)$. We therefore recover the unoriented untwisted Dijkgraaf--Witten theory from \cite{freed1993}, \cite{freed2018}.
\end{Ex}

More generally, the partial orbifold of $\mathcal{P}_{\hat{\mathsf{G}}}^{\hat{\lambda}}$ along a morphism $\hat{\phi}: \hat{\mathsf{G}} \rightarrow \hat{\mathsf{H}}$ is an orientation twisted lift of the $\mathsf{H}$-equivariant twisted oriented Dijkgraaf--Witten theory studied in \cite{maier2012}, \cite{muller2018}. We leave the study of these theories for later work.

\subsection{One dimension}
\label{sec:oneDim}

We begin with the rather simple one dimensional case. For different perspectives on the oriented theory, see \cite[\S 5]{freed1993}, \cite[\S 1]{freed2010}. Let $\hat{\lambda} \in Z^1(B \hat{\mathsf{G}}; \mathsf{U}(1)_{\pi})$. The oriented pushforward $\mathcal{A}_{\mathsf{G}}^{\lambda}$ of $\mathcal{P}^{\lambda}_{\mathsf{G}}$ is determined by its value on the positively oriented point,
\[
\mathcal{A}_{\mathsf{G}}^{\lambda}(\pt^+): B \mathsf{G} \rightarrow \Vect_{\mathbb{C}},
\]
which is the one dimensional representation $\rho_{\lambda}$ of $\mathsf{G}$ in which $\rho_{\lambda}(g) : \mathbb{C} \rightarrow \mathbb{C}$ is multiplication by $\lambda([g])$.

Consider then the unoriented theory. The value of $\mathcal{A}_{\hat{\mathsf{G}}}^{\hat{\lambda}}$ on the incompatibly oriented interval
\begin{equation}
\label{eq:unoriInt}
I^{-+}= 
\begin{tikzpicture}[very thick,scale=2.01,color=black,baseline]
\coordinate (r1) at (0,0);
\coordinate (r2) at (1,0);
\draw (r1) to (r2);
\draw[very thick, black,decoration={markings, mark=at position 0.55 with {\arrow{>}}},        postaction={decorate}] (r1) -- (r2);
\node at (0,0.2)  {\scriptsize $-$};
\node at (1,0.2)  {\scriptsize $+$};
\node[draw=blue!80!black,circle,fill=blue!80!black,scale=0.25] at (r1) {};
\node[draw=blue!80!black,circle,fill=blue!80!black,scale=0.25] at (r2) {};
\end{tikzpicture}
\end{equation}
is the diagram
\begin{equation}
\label{eq:intervalSpan}
\begin{tikzpicture}[baseline= (a).base]
\node[scale=1] (a) at (0,0){
\begin{tikzcd}[column sep=3.0em, row sep=1.0em]
& B \mathsf{G} \arrow{ld}[above left]{\textnormal{Ad}_{\varsigma}} \arrow{rd}[above right]{\id} & \\
B \mathsf{G} \arrow[d, "\mathcal{A}_{\mathsf{G}}^{\lambda}(\pt^-)"{name=A, left}] && B\mathsf{G} \arrow[d, "\mathcal{A}_{\mathsf{G}}^{\lambda}(\pt^+)"{name=B, right}]\\
\Vect_{\mathbb{C}} && \Vect_{\mathbb{C}}.
\arrow[Rightarrow, shorten <= 1.35em, shorten >= 1.35em, from=A, to=B, "\mathcal{A}_{\hat{\mathsf{G}}}^{\hat{\lambda}}(I^{-+})" above]
\end{tikzcd}
};
\end{tikzpicture}
\end{equation}
Here $\varsigma$ is an arbitrary element of $\hat{\mathsf{G}} \backslash \mathsf{G}$ and $\textnormal{Ad}_{\varsigma}$ is the weak involution given by conjugation by $\varsigma$. Up to equivalence, the underlying span of groupoids depends only on $\hat{\mathsf{G}}$, and not on the particular choice of $\varsigma$. The functor $\textnormal{Ad}_{\varsigma}$ arises by first applying Proposition \ref{prop:restrFunctor}, and then identifying $\BBun_{\hat{\mathsf{G}}}^{\ori}(I)$ and $\BBun_{\hat{\mathsf{G}}}^{\ori}(\pt)$ with their skeleta consisting of the trivial $\hat{\mathsf{G}}$-bundle with the identity orientation framing. That $\mathcal{A}_{\hat{\mathsf{G}}}^{\hat{\lambda}}(I^{-+})$ is indeed equal to \eqref{eq:intervalSpan} now follows from the observation that gauge transformations coming from $\hat{\mathsf{G}} \backslash \mathsf{G}$ reverse orientation framings.

General principles of topological field theory imply that $\mathcal{A}_{\mathsf{G}}^{\lambda}(\pt^-)$ is isomorphic to $\mathcal{A}_{\mathsf{G}}^{\lambda}(\pt^+)^{\vee}$, the dual of $\mathcal{A}_{\mathsf{G}}^{\lambda}(\pt^+)$ considered as an object of the monoidal category $\Vect_{\mathbb{C}}(\Grpd)$. This coincides with the dual representation $\rho_{\lambda}^{\vee}$. The natural transformation $\mathcal{A}_{\hat{\mathsf{G}}}^{\hat{\lambda}}(I^{-+})$ is therefore determined by a linear map $\mathfrak{i}:\mathbb{C}^{\vee} \rightarrow \mathbb{C}$
which satisfies
\[
\mathfrak{i} \circ \rho(\varsigma g \varsigma^{-1})^{- \vee} = \rho_{\lambda}(g) \circ \mathfrak{i}, \qquad g \in \mathsf{G}.
\]
The definition of $\mathcal{P}^{\hat{\lambda}}_{\hat{\mathsf{G}}}$ shows that $\mathfrak{i}$ is multiplication by $\hat{\lambda}([\varsigma^{-1}])$. In this way, $\mathcal{A}_{\hat{\mathsf{G}}}^{\hat{\lambda}}$ lifts $\rho_{\lambda}$ to the Real representation of $\mathsf{G}$ determined by $\hat{\lambda}$, in the sense of \cite[\S 2.2]{mbyoung2018c}.

Passing to quantum theories, that is, applying the functor $\Par$, amounts to taking $\mathsf{G}$-invariants. In the oriented case we get
\[
\mathcal{Z}_{\mathsf{G}}^{\lambda}(\pt^+) = \mathbb{C}^{(\rho_{\lambda},\mathsf{G})},
\]
which is non-trivial if and only if $\lambda$ is trivial. In the unoriented case $\mathbb{C}^{(\rho_{\lambda},\mathsf{G})}$ inherits an orthogonal structure from the Real structure on $\rho_{\lambda}$.

\subsection{Two dimensions}
\label{sec:twoDim}

Discussions of two dimensional oriented Dijkgraaf--Witten theory can be found in \cite[\S 6]{freed1994}, \cite[\S 2.3]{freed2010}.

Fix $\hat{\lambda} \in Z^2(B \hat{\mathsf{G}}; \mathsf{U}(1)_{\pi})$. The oriented theory $\mathcal{A}_{\mathsf{G}}^{\lambda}$ assigns to $\pt^+$ the pseudofunctor
\[
\mathcal{A}_{\mathsf{G}}^{\lambda}(\pt^+):
B \mathsf{G} \rightarrow 2 \Vect_{\mathbb{C}}
\]
given by the $2$-representation $\rho$ of $\mathsf{G}$ on $\Vect_{\mathbb{C}} \in 2 \Vect_{\mathbb{C}}$ in which $\mathsf{G}$ acts by identity functors and the coherence $2$-isomorphisms
\[
\psi_{g_2, g_1} : \rho(g_2) \circ \rho(g_1) \Rightarrow \rho(g_2 g_1)
\]
are multiplication by $\lambda([g_2 \vert g_1])$. The value of $\mathcal{A}_{\hat{\mathsf{G}}}^{\hat{\lambda}}$ on the incompatibly oriented interval \eqref{eq:unoriInt} is again of the form \eqref{eq:intervalSpan}, but with $\Vect_{\mathbb{C}}$ replaced with $2 \mathsf{Vect}_{\mathbb{C}}$. The dual $\mathcal{A}_{\mathsf{G}}^{\lambda}(\pt^+)^{\vee}$ is naturally identified with the category of linear functors from $\Vect_{\mathbb{C}}$ to the monoidal unit (which is also $\Vect_{\mathbb{C}}$) with the induced $2$-representation structure. The intertwiner $\mathcal{A}_{\hat{\mathsf{G}}}^{\hat{\lambda}}(I^{-+})$ is the data of a linear functor $\beta_{\pt}:\rho^{\vee} \rightarrow \rho$ and natural isomorphisms
\[
\beta(g) :\rho(g) \circ \beta_{\pt} \Rightarrow \beta_{\pt} \circ \rho(\varsigma g \varsigma^{-1})^{\vee}, \qquad g \in \mathsf{G}.
\]
Explicitly, $\beta(g)$ is multiplication by $\frac{\hat{\lambda}([g \vert \varsigma^{-1}])}{\hat{\lambda}([\varsigma^{-1} \vert \varsigma g \varsigma^{-1}])}$. In particular, if we set $\rho(\varsigma) = \beta_{\pt}$ for all $\varsigma \in \hat{\mathsf{G}} \backslash \mathsf{G}$ and $\psi_{\omega_2, \omega_1}= \hat{\lambda}([\omega_2 \vert \omega_1])$ for all $\omega_1, \omega_2 \in \hat{\mathsf{G}}$, so that
\[
\psi_{\varsigma^{-1}, \varsigma g \varsigma^{-1}}^{-1} \circ \psi_{g, \varsigma^{-1}} = \beta(g),
\]
then $\mathcal{A}_{\hat{\mathsf{G}}}^{\hat{\lambda}}(I^{-+})$ defines a lift of $\rho$ to a one dimensional Real $2$-representation of $\mathsf{G}$, in the sense of \cite[\S 5.3]{mbyoung2018c}.

Passing to quantum theories amounts to taking bicategories of $\mathsf{G}$-equivariant objects. Hence, the oriented theory assigns to the point
\[
\mathcal{Z}_{\mathsf{G}}^{\lambda} (\pt^+) \simeq \mathcal{A}_{\mathsf{G}}^{\lambda} (\pt^+)^{\mathsf{G}} \simeq \mathsf{Rep}_{\mathbb{C}}^{\lambda}(\mathsf{G}),
\]
the category of finite dimensional $\lambda$-twisted representations of $\mathsf{G}$ endowed with its standard Calabi--Yau structure. See \cite[Proposition 5.4]{muller2018}. The unoriented lift $\mathcal{Z}_{\hat{\mathsf{G}}}^{\hat{\lambda}}$ determines is a $\mathsf{G}$-torsor of exact duality structures on $\mathsf{Rep}_{\mathbb{C}}^{\lambda}(\mathsf{G})$. Concretely, given an element $\varsigma \in \hat{\mathsf{G}} \backslash \mathsf{G}$, the associated duality functor
\[
P^{\varsigma} : \mathsf{Rep}_{\mathbb{C}}^{\lambda}(\mathsf{G})^{\op} \rightarrow \mathsf{Rep}_{\mathbb{C}}^{\lambda}(\mathsf{G})
\]
sends a $\lambda$-twisted representation $(V, \phi)$ to $(V^{\vee}, \phi^{\varsigma})$, where
\[
\phi^{\varsigma}(g) = \uptau_{\pi}^{\refl}(\hat{\lambda})([\varsigma] g)^{-1} \phi(\varsigma g^{-1} \varsigma^{-1})^{\vee}, \qquad g \in \mathsf{G}
\]
and
\[
\uptau^{\refl}_{\pi}(\hat{\lambda})([\omega]g) = \hat{\lambda}([g^{-1} \vert g])^{\frac{\pi(\omega)-1}{2}} \frac{\hat{\lambda}([\omega g^{\pi(\omega)} \omega^{-1} \vert \omega])}{\hat{\lambda}([\omega \vert g^{\pi(\omega)}])}, \qquad g \in \mathsf{G}, \; \omega \in \hat{\mathsf{G}}.
\]
The natural isomorphism $\Theta^{\varsigma}: 1_{\mathsf{Rep}^{\lambda}(\mathsf{G})} \Rightarrow P^{\varsigma} \circ (P^{\varsigma})^{\op}$ has components
\[
\Theta^{\varsigma}_{\phi} = \hat{\lambda}([\varsigma \vert \varsigma]) \ev_{\phi} \circ \phi(\varsigma^2), \qquad \phi \in \mathsf{Rep}^{\lambda}(\mathsf{G})
\]
where $\ev_{\phi}$ is the evaluation isomorphism from a finite dimensional vector space to its double dual. For each $g \in \mathsf{G}$, the natural transformation $F^{\varsigma}_g : P^{\varsigma} \Rightarrow P^{g \varsigma}$ whose component at $\phi \in \mathsf{Rep}_{\mathbb{C}}^{\lambda}(\mathsf{G})$ is $\hat{\lambda}([g \vert \varsigma]) \phi(g)^{- \vee}$ defines a non-singular form functor
\[
(1_{\mathsf{Rep}_{\mathbb{C}}^{\lambda}(\mathsf{G})}, F^{\varsigma}_g): (P^{\varsigma}, \Theta^{\varsigma}) \rightarrow (P^{g \varsigma}, \Theta^{g \varsigma}).
\]
A direct calculation shows that these natural transformations satisfy $F^{g_1\varsigma}_{g_2} \circ F^{\varsigma}_{g_1} = F^{\varsigma}_{g_2 g_1}$.

Alternatively, we can regard $\mathcal{Z}_{\hat{\mathsf{G}}}^{\hat{\lambda}}$ as taking values in the Morita bicategory $\mathsf{Alg}_{\mathbb{C}}$ of associative algebras, bimodules and intertwiners. In this case, $\mathcal{Z}_{\hat{\mathsf{G}}}^{\hat{\lambda}}(\pt^+)$ is the twisted group algebra $\mathbb{C}^{\lambda}[\mathsf{G}]$, considered as a symmetric Frobenius algebra. Elements of $\mathbb{C}^{\lambda}[\mathsf{G}]$ will be written as $\sum_{g \in \mathsf{G}} a_g l_g$ with $a_g \in \mathbb{C}$. Replacing the duality structure $P^{\varsigma}$ is the anti-automorphism
\[
p^{\varsigma} : \mathbb{C}^{\lambda}[\mathsf{G}]^{\op} \rightarrow \mathbb{C}^{\lambda}[\mathsf{G}], \qquad
\sum_{g \in \mathsf{G}} a_g l_g \mapsto  \sum_{g \in \mathsf{G}} \uptau_{\pi}^{\refl}(\hat{\lambda})([\varsigma] g)^{-1} a_g l_{\varsigma g^{-1} \varsigma^{-1}}.
\]
The natural isomorphism $\Theta^{\varsigma}$ becomes an isomorphism from the Morita context determined by $p^{\varsigma}$ to its opposite. Up to a scalar, this isomorphism is simply left multiplication by $l_{\varsigma^2}$.

Continuing, in dimension one we have
\[
\mathcal{Z}_{\hat{\mathsf{G}}}^{\hat{\lambda}}(S^1) \simeq \Gamma_{\Lambda B \mathsf{G}} (\uptau_{S^1}(\lambda)^{-1}_{\mathbb{C}}),
\]
identified with either $K_0(\mathsf{Rep}^{\lambda}_{\mathbb{C}}(\mathsf{G})) \otimes_{\mathbb{Z}} \mathbb{C}$ or the centre of $\mathbb{C}^{\lambda}[\mathsf{G}]$. The $\mathsf{G}$-torsor of duality structures or anti-automorphisms induces a single isometric involution $p$ of $\mathcal{Z}_{\hat{\mathsf{G}}}^{\hat{\lambda}}(S^1)$.

Turning to surfaces, it is well-known (see, for example, \cite[Corollary 12]{willerton2008}) that for the sphere and torus we have
\[
\mathcal{Z}_{\hat{\mathsf{G}}}^{\hat{\lambda}}(S^2)= 1,
\qquad \mathcal{Z}_{\hat{\mathsf{G}}}^{\hat{\lambda}}(\mathbb{T}^2) = \frac{1}{\vert \mathsf{G} \vert} \sum_{(g_1,g_2) \in \mathsf{G}^{(2)}} \frac{\lambda([g_2 \vert g_1])}{\lambda([g_1 \vert g_2])}.
\]

Consider then the crosscap $\mathbb{M}$. Using the equivalence \eqref{eq:crosscapGroupoid},  we find
\[
Q= \mathcal{Z}_{\hat{\mathsf{G}}}^{\hat{\lambda}}(\mathbb{M})= \sum_{\varsigma \in \hat{\mathsf{G}} \backslash \mathsf{G}} \hat{\lambda}([\varsigma \vert \varsigma]) l_{\varsigma^2} \in \mathcal{Z}_{\hat{\mathsf{G}}}^{\hat{\lambda}}(S^1).
\]
This is the $(\hat{\mathsf{G}}, \hat{\lambda})$-twisted Frobenius element, in the sense that the assignment $V \mapsto \tr_V Q $ restricts to the $(\hat{\mathsf{G}},\hat{\lambda})$-Frobenius--Schur indicator
\[
\nu^{(\hat{\mathsf{G}},\hat{\lambda})}: \textnormal{Irr}_{\mathbb{C}}^{\lambda}(\mathsf{G}) \rightarrow \{-1,0,+1\}
\]
on the set of irreducible $\lambda$-twisted representations of $\mathsf{G}$.
Using this observation, if we denote by $\mathfrak{p}_V \in \mathcal{Z}_{\hat{\mathsf{G}}}^{\hat{\lambda}}(S^1)$ the primitive orthogonal idempotent associated to $V \in \textnormal{Irr}_{\mathbb{C}}^{\lambda}(\mathsf{G})$, then we can write
\begin{equation}
\label{eq:frobSchurExpres}
Q= \sum_{V \in \textnormal{Irr}_{\mathbb{C}}^{\lambda}(\mathsf{G})} \nu^{(\hat{\mathsf{G}}, \hat{\lambda})}(V) \frac{\vert \mathsf{G} \vert}{\dim_{\mathbb{C}} V} \mathfrak{p}_V.
\end{equation}

Consider now the projective plane. In terms of the equivalence \eqref{eq:projPlaneGroupoid}, we compute $\uptau_{\mathbb{RP}^2}^{\ori}(\hat{\lambda})(\varsigma) = \hat{\lambda}([\varsigma \vert \varsigma])$, from which it follows that
\[
\mathcal{Z}_{\hat{\mathsf{G}}}^{\hat{\lambda}}(\mathbb{RP}^2) = \frac{1}{\vert \mathsf{G} \vert} \sum_{\substack{\varsigma \in \hat{\mathsf{G}} \backslash \mathsf{G} \\ \varsigma^2 =e}} \hat{\lambda}([\varsigma \vert \varsigma]).
\]
In particular, $\mathcal{Z}_{\hat{\mathsf{G}}}^{\hat{\lambda}}(\mathbb{RP}^2)$ vanishes if $\hat{\mathsf{G}}$ is not a split Real structure. On the other hand, $\mathcal{Z}_{\hat{\mathsf{G}}}^{\hat{\lambda}}(\mathbb{RP}^2)$ can be computed via the equality $\mathcal{Z}_{\hat{\mathsf{G}}}^{\hat{\lambda}}(\mathbb{RP}^2) = \langle Q \rangle_e$. Doing so and using equation \eqref{eq:frobSchurExpres} gives
\[
\mathcal{Z}_{\hat{\mathsf{G}}}^{\hat{\lambda}}(\mathbb{RP}^2)  = \frac{1}{\vert \mathsf{G} \vert} \sum_{V \in \textnormal{Irr}_{\mathbb{C}}^{\lambda}(\mathsf{G})} \nu^{(\hat{\mathsf{G}}, \hat{\lambda})}(V) \dim_{\mathbb{C}} V.
\]
In this way, we obtain a formula for the signed\footnote{The twisted $2$-cocycle condition implies that $\hat{\lambda}([\varsigma \vert \varsigma]) \in \{ \pm 1\}$} number of odd square roots of $e$ in terms of the Real twisted representation theory of $\mathsf{G}$.

As a final example, in terms of the equivalence \eqref{eq:kleinGroupoid}, we compute for the Klein bottle
\[
\mathcal{Z}_{\hat{\mathsf{G}}}^{\hat{\lambda}}(\mathbb{K}) = \frac{1}{\vert \mathsf{G} \vert} \sum_{(g, \varsigma) \in \hat{\mathsf{G}}^{(2)}} \hat{\lambda}([g \vert g^{-1}])^{-1} \frac{\hat{\lambda}([g \vert \varsigma])}{\hat{\lambda}([\varsigma \vert g^{-1}])}.
\]
While $\mathcal{Z}_{\hat{\mathsf{G}}}^{\hat{\lambda}}(\mathbb{K})$ itself admits a representation theoretic interpretation  (see Theorem \ref{thm:twistedFS} below), there is a rather nice interpretation of the one loop partition function
\[
\hat{\mathcal{Z}}_{\hat{\mathsf{G}}}^{\hat{\lambda},1 \mhyphen \textnormal{loop}} = \frac{1}{2} \left(
\mathcal{Z}_{\hat{\mathsf{G}}}^{\hat{\lambda}}(\mathbb{T}^2) + \mathcal{Z}_{\hat{\mathsf{G}}}^{\hat{\lambda}}(\mathbb{K})
\right).
\]

\begin{Prop}
\label{prop:2dRealVerlinde}
The equality
\[
\hat{\mathcal{Z}}_{\hat{\mathsf{G}}}^{\hat{\lambda},1\mhyphen \textnormal{loop}} = \textnormal{rk} \, KR^{0 + \hat{\lambda}}(B \mathsf{G})
\]
holds, where $KR^{0 + \hat{\lambda}}(B \mathsf{G})$ is the $\hat{\lambda}$-twisted $KR$-theory of the groupoid $B \mathsf{G}$, considered as a double cover of $B \hat{\mathsf{G}}$.
\end{Prop}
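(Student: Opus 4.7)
The plan is to rewrite the left hand side as a single groupoid integral over a Real double loop groupoid of $B\hat{\mathsf{G}}$, and then to identify this integral with the (complexified) rank of twisted $KR$-theory via the Real analogue of the Chern character argument used by Willerton in the oriented case. First, apply Theorem \ref{thm:closedPartition}(i) to both $Z = \mathbb{T}^2$ and $Z = \mathbb{K}$ and use the example identifying
\[
\BBun^{\ori}_{\hat{\mathsf{G}}}(\mathbb{T}^2) \sqcup \BBun^{\ori}_{\hat{\mathsf{G}}}(\mathbb{K}) \simeq \hat{\mathsf{G}}^{(2)} \git \mathsf{G}
\]
to merge the two partition functions into a single integral
\[
\mathcal{Z}_{\hat{\mathsf{G}}}^{\hat{\lambda}}(\mathbb{T}^2) + \mathcal{Z}_{\hat{\mathsf{G}}}^{\hat{\lambda}}(\mathbb{K}) = \int_{\hat{\mathsf{G}}^{(2)} \git \mathsf{G}} \uptau^{\ori}(\hat{\lambda}),
\]
where $\uptau^{\ori}(\hat{\lambda})$ is the combined orientation twisted transgression from $\uptau^{\ori}_{\mathbb{T}^2}(\hat{\lambda})$ and $\uptau^{\ori}_{\mathbb{K}}(\hat{\lambda})$ under the above equivalence.

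Next, I would exploit the double cover $\hat{\mathsf{G}}^{(2)} \git \mathsf{G} \rightarrow \hat{\mathsf{G}}^{(2)} \git \hat{\mathsf{G}} \simeq \Lambda \Lambda_{\pi}^{\refl} B \hat{\mathsf{G}}$ recorded in the same example. A direct cocycle computation, carried out in the spirit of the proof of Proposition \ref{prop:oriVsOriTwistTrans}, shows that the function $\uptau^{\ori}(\hat{\lambda})$ is invariant under the deck involution (this is forced by the fact that the only sign ambiguity in the transgression sits in the $\ori_{(-)}$-twisted coefficients, which is trivialized by restriction to $\hat{\mathsf{G}}^{(2)}$). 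Hence the integrand descends to a function $\widetilde{\uptau^{\ori}(\hat{\lambda})}$ on the base, and the groupoid cardinality formula gives
\[
\tfrac{1}{2}\int_{\hat{\mathsf{G}}^{(2)} \git \mathsf{G}} \uptau^{\ori}(\hat{\lambda})
=
\int_{\Lambda \Lambda_{\pi}^{\refl} B \hat{\mathsf{G}}} \widetilde{\uptau^{\ori}(\hat{\lambda})}.
\]
The factor of $\tfrac{1}{2}$ in the definition of $\hat{\mathcal{Z}}^{\hat{\lambda}, 1 \mhyphen \textnormal{loop}}_{\hat{\mathsf{G}}}$ is precisely what is needed to absorb the double cover.

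The remaining and main step is to identify the groupoid integral over $\Lambda \Lambda_{\pi}^{\refl} B \hat{\mathsf{G}}$ with $\textnormal{rk}\, KR^{0+\hat{\lambda}}(B \mathsf{G})$. In the oriented case, Willerton \cite{willerton2008} shows that $\textnormal{rk}\, K^{0+\lambda}(B \mathsf{G}) = \int_{\Lambda^2 B \mathsf{G}} \uptau_{\mathbb{T}^2}(\lambda)$ by expanding both sides in the basis of irreducible $\lambda$-twisted characters and applying orthogonality. I would run the analogous argument on the Real side: classes in $KR^{0+\hat{\lambda}}(B \mathsf{G})$ are indexed by orbits of the Real involution on $\Irr^{\lambda}_{\mathbb{C}}(\mathsf{G})$ together with the Frobenius--Schur type data $\nu^{(\hat{\mathsf{G}},\hat{\lambda})}$ already appearing in \eqref{eq:frobSchurExpres}, so the rank can be expressed as
\[
\textnormal{rk}\, KR^{0+\hat{\lambda}}(B \mathsf{G}) = \tfrac{1}{2}\bigl(\#\Irr^{\lambda}_{\mathbb{C}}(\mathsf{G}) + \#\Irr^{\lambda}_{\mathbb{C}}(\mathsf{G})^{\hat{\mathsf{G}}}_{\hat{\lambda}}\bigr),
\]
where the second term counts irreducibles fixed by the Real involution weighted by the twisted Frobenius--Schur indicator. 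Matching this to the groupoid integral reduces, after inserting the expansion \eqref{eq:frobSchurExpres} and using the Peter--Weyl / orthogonality relations for $\lambda$-twisted characters together with the descent of the transgression, to the equality
\[
\mathcal{Z}^{\hat{\lambda}}_{\hat{\mathsf{G}}}(\mathbb{T}^2) = \#\Irr^{\lambda}_{\mathbb{C}}(\mathsf{G}),
\qquad
\mathcal{Z}^{\hat{\lambda}}_{\hat{\mathsf{G}}}(\mathbb{K}) = \sum_{V \in \Irr^{\lambda}_{\mathbb{C}}(\mathsf{G})} \nu^{(\hat{\mathsf{G}},\hat{\lambda})}(V)^{2},
\]
the first being the oriented Verlinde formula and the second a direct character-theoretic evaluation using the explicit model \eqref{eq:kleinGroupoid}.

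The main obstacle is precisely the third step: assembling the various Real character identities needed to equate the descended groupoid integral with $\textnormal{rk}\, KR^{0+\hat{\lambda}}(B\mathsf{G})$. The difficulty is genuine because the definition of twisted $KR$-theory encodes the Real structure through both the extension class of $\hat{\mathsf{G}}$ and the cocycle $\hat{\lambda}$, so one must be careful in aligning the sign conventions in $\uptau^{\ori}$, in $\nu^{(\hat{\mathsf{G}}, \hat{\lambda})}$, and in the Real Chern character. By contrast, the first two steps are essentially bookkeeping once the orientation twisted transgression and the double cover structure of the one-loop moduli space have been set up in the previous sections.
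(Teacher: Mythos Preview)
Your first two steps---writing $\mathcal{Z}_{\hat{\mathsf{G}}}^{\hat{\lambda}}(\mathbb{T}^2)+\mathcal{Z}_{\hat{\mathsf{G}}}^{\hat{\lambda}}(\mathbb{K})$ as a groupoid integral over $\hat{\mathsf{G}}^{(2)}\git\mathsf{G}$ and halving via the double cover $\hat{\mathsf{G}}^{(2)}\git\mathsf{G}\to\Lambda\Lambda_\pi^{\refl}B\hat{\mathsf{G}}$---match the paper's argument. The paper, however, handles the $KR$-theory side differently and much more cheaply: it simply quotes from \cite{mbyoung2018a} the identity
\[
\textnormal{rk}\,KR^{0+\hat{\lambda}}(B\mathsf{G})=\int_{\Lambda\Lambda_\pi^{\refl}B\hat{\mathsf{G}}}\uptau_{S^1}\uptau_\pi^{\refl}(\hat{\lambda}),
\]
and then observes (by direct comparison of explicit cocycle formulae) that the pullback of $\uptau_{S^1}\uptau_\pi^{\refl}(\hat{\lambda})$ along the double cover is exactly $\uptau_{\mathbb{T}^2}(\lambda)\sqcup\uptau_{\mathbb{K}}^{\ori}(\hat{\lambda})$. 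That finishes the proof in two lines.

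Your third step takes a genuinely different route: rather than identifying the descended integrand with $\uptau_{S^1}\uptau_\pi^{\refl}(\hat{\lambda})$ and invoking the cited rank formula, you try to evaluate both sides in the basis of irreducibles. Your evaluations $\mathcal{Z}_{\hat{\mathsf{G}}}^{\hat{\lambda}}(\mathbb{T}^2)=\#\textnormal{Irr}^\lambda_{\mathbb{C}}(\mathsf{G})$ and $\mathcal{Z}_{\hat{\mathsf{G}}}^{\hat{\lambda}}(\mathbb{K})=\sum_V\nu^{(\hat{\mathsf{G}},\hat{\lambda})}(V)^2$ are correct (the latter is $\tr\,p$ in the idempotent basis, hence the number of irreducibles fixed by the Real involution). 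But the matching then rests entirely on the claim that $\textnormal{rk}\,KR^{0+\hat{\lambda}}(B\mathsf{G})$ equals the number of orbits of that involution on $\textnormal{Irr}^\lambda_{\mathbb{C}}(\mathsf{G})$. You treat this as something to be checked by ``Real character identities'', but this is exactly the substance of the result in \cite{mbyoung2018a} that the paper cites; it is not a bookkeeping step, and your sketch does not supply an independent argument for it. In particular, the phrase ``weighted by the twisted Frobenius--Schur indicator'' in your displayed rank formula is misleading (the weights you actually need are $\nu^2\in\{0,1\}$, not $\nu\in\{-1,0,1\}$), and even with that corrected you still owe a proof that quaternionic-type irreducibles contribute a free $\mathbb{Z}$-summand to $KR^{0+\hat{\lambda}}$ in degree zero for general $(\hat{\mathsf{G}},\hat{\lambda})$. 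The paper's route sidesteps all of this by staying at the level of transgressed cocycles and delegating the representation-theoretic content to the reference.
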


\begin{proof}
It was proven in \cite{mbyoung2018a} that $KR^{0 + \hat{\lambda}}(B \mathsf{G})$ is of rank
\[
\int_{\Lambda \Lambda_{\pi}^{\refl} B \hat{\mathsf{G}}} \uptau_{S^1} \uptau^{\refl}_{\pi} (\hat{\lambda}).
\]
The statement then follows from the fact that $\BBun_{\hat{\mathsf{G}}}^{\ori, 1 \mhyphen \textnormal{loop}}$ is a double cover of $\Lambda \Lambda_{\pi}^{\refl} B \hat{\mathsf{G}}$ together with the observation that the pullback of $\uptau_{S^1} \uptau^{\refl}_{\pi}$ to $\BBun_{\hat{\mathsf{G}}}^{\ori, 1 \mhyphen \textnormal{loop}}$ is equal to $\uptau_{\mathbb{T}^2} \sqcup \uptau^{\ori}_{\mathbb{K}}$. The latter observation follows from the explicit expressions for these $2$-cocycles.
\end{proof}

\begin{Rem}
For comparison, \cite[Corollary 13]{willerton2008} gives the equality $\mathcal{Z}^{\lambda}_{\mathsf{G}}(\mathbb{T}^2) = \textnormal{rk} \, K^{0 + \lambda}(B \mathsf{G})$.
\end{Rem}

In general, the partition function of a closed nonorientable surface $\Sigma$ is given by the Verlinde-type formula
\[
\mathcal{Z}_{\hat{\mathsf{G}}}^{\hat{\lambda}}(\Sigma) = \frac{1}{\vert \mathsf{G} \vert^{\chi(\Sigma)}} \sum_{V \in \textnormal{Irr}_{\mathbb{C}}^{\lambda}(\mathsf{G})} (\nu^{(\hat{\mathsf{G}},\hat{\lambda})}(V) \dim_{\mathbb{C}} V)^{\chi(\Sigma)},
\]
where $\chi(-)$ is the topological Euler characteristic. This formula can be proved  by decomposing $\Sigma$ into cups, caps, genus adding operators
\[
\begin{tikzpicture}[very thick,scale=1.5,baseline=0,color=black]
\coordinate (p1) at (0,-0.575);
\coordinate (p2) at (0,-0.225);
\coordinate (p3) at (0,0.175);
\coordinate (p4) at (0,0.525);
\coordinate (p5) at (0.8,0.15);
\coordinate (p6) at (0.8,-0.15);
\draw (p2) .. controls +(0.35,0) and +(0.35,0) ..  (p3); 
\draw (p4) .. controls +(0.5,0) and +(-0.5,0) ..  (p5); 
\draw (p6) .. controls +(-0.5,0) and +(0.5,0) ..  (p1); 
\draw[very thick, blue!80!black,opacity=0.2] (p3) .. controls +(-0.15,0) and +(-0.15,0) ..  (p4); 
\draw[very thick, blue!80!black] (p5) .. controls +(0.15,0) and +(0.15,0) ..  (p6); 
\draw[very thick, blue!80!black,opacity=0.2] (p5) .. controls +(-0.15,0) and +(-0.15,0) ..  (p6); 
\coordinate (q1) at (0,-0.575);
\coordinate (q2) at (0,-0.225);
\coordinate (q3) at (0,0.175);
\coordinate (q4) at (0,0.525);
\coordinate (q5) at (-0.8,0.15);
\coordinate (q6) at (-0.8,-0.15);
\draw (q2) .. controls +(-0.35,0) and +(-0.35,0) ..  (q3); 
\draw (q4) .. controls +(-0.5,0) and +(0.5,0) ..  (q5); 
\draw (q6) .. controls +(0.5,0) and +(-0.5,0) ..  (q1); 
\draw[very thick, blue!80!black] (q1) .. controls +(0.15,0) and +(0.15,0) ..  (q2);
\draw[very thick, blue!80!black,opacity=0.2] (q1) .. controls +(-0.15,0) and +(-0.15,0) ..  (q2);
\draw[very thick, blue!80!black] (q3) .. controls +(0.15,0) and +(0.15,0) ..  (q4); 
\draw[very thick, blue!80!black] (q5) .. controls +(-0.15,0) and +(-0.15,0) ..  (q6); 
\draw[very thick, blue!80!black] (q5) .. controls +(0.15,0) and +(0.15,0) ..  (q6); 
\end{tikzpicture},
\]
as in the oriented case, and additionally crosscaps. Writing the genus adding operators in terms of $\{\mathfrak{p}_{V} \}_{V \in \textnormal{Irr}_{\mathbb{C}}^{\lambda}(\mathsf{G})}$ and the crosscap as equation \eqref{eq:frobSchurExpres} yields the claimed formula. 
Combined with equation \eqref{eq:DWPartitionFunction}, we obtain the following result.

\begin{Thm}
\label{thm:twistedFS}
If $\Sigma$ is a closed nonorientable surface, then
\[
\frac{1}{\vert \mathsf{G} \vert} \sum_{\rho \in \Hom^{\ori_{\Sigma}}_{\Grp}(\pi_1(\Sigma), \hat{\mathsf{G}})} \langle f_{\rho}^*\hat{\lambda}, [\Sigma] \rangle
=
\frac{1}{\vert \mathsf{G} \vert^{\chi(\Sigma)}} \sum_{V \in \textnormal{Irr}_{\mathbb{C}}^{\lambda}(\mathsf{G})} (\nu^{(\hat{\mathsf{G}},\hat{\lambda})}(V) \dim_{\mathbb{C}} V)^{\chi(\Sigma)}.
\]
\end{Thm}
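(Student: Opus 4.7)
The plan is to compute $\mathcal{Z}_{\hat{\mathsf{G}}}^{\hat{\lambda}}(\Sigma)$ in two different ways and equate them. The left-hand side of the claimed identity is precisely the representation-theoretic form of the partition function $\mathcal{Z}_{\hat{\mathsf{G}}}^{\hat{\lambda}}(\Sigma)$ obtained by feeding Proposition \ref{prop:holonomyDescr} into part (i) of Theorem \ref{thm:closedPartition} (equivalently, into the unraveled formula \eqref{eq:DWPartitionFunction}), up to the Euler-characteristic normalization coming from the groupoid cardinalities of automorphism groups. So the task reduces to proving the Verlinde-type equality
\[
\mathcal{Z}_{\hat{\mathsf{G}}}^{\hat{\lambda}}(\Sigma) = \frac{1}{\vert \mathsf{G} \vert^{\chi(\Sigma)}} \sum_{V \in \textnormal{Irr}_{\mathbb{C}}^{\lambda}(\mathsf{G})} (\nu^{(\hat{\mathsf{G}},\hat{\lambda})}(V) \dim_{\mathbb{C}} V)^{\chi(\Sigma)}.
\]

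First I would decompose $\Sigma$ in $\Cob_{\langle 2,1 \rangle}$ as a composition of elementary cobordisms: a cap $\varnothing \to S^1$, some number $g$ of genus-adding handles $S^1 \to S^1$, some number $k \geq 1$ of crosscaps $\varnothing \to S^1$ (combined with a pair-of-pants to fuse them into the previous $S^1$), and a cap $S^1 \to \varnothing$; this is possible because any closed nonorientable surface is a connected sum of a closed orientable surface with at least one copy of $\mathbb{RP}^2$, and $\chi(\Sigma) = 2 - 2g - k$. By functoriality and the two-dimensional classification (Theorem \ref{thm:2dClassification}, part (ii)), applying $\mathcal{Z}_{\hat{\mathsf{G}}}^{\hat{\lambda}}$ to this decomposition yields a trace-like expression inside $\mathcal{Z}_{\hat{\mathsf{G}}}^{\hat{\lambda}}(S^1)$, namely the trace of a product of operators built from the canonical genus-adding element (induced by $m \circ \Delta$) and the crosscap charge $Q$.

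Second, I would diagonalize these operators in the primitive-idempotent basis $\{\mathfrak{p}_V\}_{V \in \textnormal{Irr}_{\mathbb{C}}^{\lambda}(\mathsf{G})}$ of the commutative Frobenius algebra $\mathcal{Z}_{\hat{\mathsf{G}}}^{\hat{\lambda}}(S^1) \simeq Z(\mathbb{C}^{\lambda}[\mathsf{G}])$. As is standard (and used already for the oriented part of the calculation), each genus-adding operator acts on $\mathfrak{p}_V$ by the scalar $\vert \mathsf{G} \vert / \dim_{\mathbb{C}} V$, and by equation \eqref{eq:frobSchurExpres} the crosscap state expands as
\[
Q = \sum_{V \in \textnormal{Irr}_{\mathbb{C}}^{\lambda}(\mathsf{G})} \nu^{(\hat{\mathsf{G}},\hat{\lambda})}(V)\, \frac{\vert \mathsf{G} \vert}{\dim_{\mathbb{C}} V}\, \mathfrak{p}_V.
\]
Combining these eigenvalue computations over $g$ genus-adding operators and $k$ crosscap insertions, and tracking the power of $\vert \mathsf{G} \vert / \dim_{\mathbb{C}} V$ per component, the exponent collects to $2 - 2g - k = \chi(\Sigma)$, giving the right-hand side.

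The main obstacle is bookkeeping rather than structural: one must ensure that the expansion of $Q$ into idempotents and of genus-adders into eigenoperators is legal in the \emph{unoriented} Frobenius algebra structure, and that the constraint \eqref{eq:linConstraint} (which links $Q$ to the involution $p$) is compatible with fusing multiple crosscaps via the pair-of-pants. This compatibility follows from Proposition \ref{prop:unoriEquivClass} together with Proposition \ref{prop:unoriOrbifold}, so no extra input is needed; the remaining verification is the exponent count, which matches because each elementary piece contributes either a factor $\vert \mathsf{G} \vert / \dim_{\mathbb{C}} V$ (handle or crosscap) and the two caps contribute the overall normalization $(\dim_{\mathbb{C}} V / \vert \mathsf{G} \vert)^2$ via $\langle \mathfrak{p}_V \rangle_e$ and the unit.
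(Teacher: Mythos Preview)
Your approach is essentially the paper's own: identify the left-hand side with $\mathcal{Z}_{\hat{\mathsf{G}}}^{\hat{\lambda}}(\Sigma)$ via equation \eqref{eq:DWPartitionFunction} and Proposition \ref{prop:holonomyDescr}, then obtain the right-hand side by decomposing $\Sigma$ into caps, genus-adding operators and crosscaps and diagonalizing in the primitive-idempotent basis using equation \eqref{eq:frobSchurExpres}. One small slip to fix in your bookkeeping: the genus-adding operator has eigenvalue $(\vert \mathsf{G}\vert/\dim_{\mathbb{C}} V)^2$, not $\vert \mathsf{G}\vert/\dim_{\mathbb{C}} V$, on $\mathfrak{p}_V$ (so your exponent count only balances as written if you take $g=0$, which is always possible for a closed nonorientable surface).
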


Theorem \ref{thm:twistedFS} generalizes a number of earlier results. When both the Real structure and twisting are trivial, this is a theorem of Frobenius and Schur \cite{frobenius1906}; a proof using topological field theory was given in \cite{snyder2017}. More generally, when $\hat{\lambda}$ is in the image of
\[
\pi^*_{\mathsf{G}}: Z^2(B \mathsf{G}; \mathbb{Z}_2) \rightarrow Z^2(B (\mathsf{G} \times \mathbb{Z}_2); \mathsf{U}(1)_{\pi}),
\]
the above formula was proved in \cite[Theorem 4.1]{turaev2007}.

\begin{Rems}
\begin{enumerate}[label=(\roman*)]
\item By the cobordism hypothesis for two dimensional unoriented theories, in the form of \cite[Theorem 3.5.4]{schommer2011}, the triple $(\mathbb{C}^{\lambda}[\mathsf{G}], p^{\varsigma}, \Theta^{\varsigma})$ completely determines the theory $\mathcal{Z}_{\hat{\mathsf{G}}}^{\hat{\lambda}} : \Cob_{\langle 2,1,0 \rangle} \rightarrow \mathsf{Alg}_{\mathbb{C}}$.

\item Via Theorem \ref{thm:2dClassification}, the triple $(\Gamma_{\Lambda B \mathsf{G}}(\uptau(\lambda)^{-1}_{\mathbb{C}}), p, Q)$ determines the non-extended truncation of $\mathcal{Z}_{\hat{\mathsf{G}}}^{\hat{\lambda}}$. From this perspective, when the Real structure and twisting are trivial the non-extended theory was constructed in \cite[Theorem 4]{loktev2011}.

\item The theory $\mathcal{Z}_{\mathsf{G} \times \mathbb{Z}_2}^{\hat{\lambda}}$, with $\hat{\lambda}$ in the image of $\pi_{\mathsf{G}}^*$, was constructed using state sums in \cite{turaev2007}.

\item There is a generalization of the theories $\mathcal{Z}_{\hat{\mathsf{G}}}^{\hat{\lambda}}$ which clarifies the geometric meaning of the charge of the crosscap. Instead of $B \hat{\mathsf{G}}$, consider the action groupoid $X \git \hat{\mathsf{G}}$ associated to a finite $\hat{\mathsf{G}}$-set $X$. A cocycle $\hat{\lambda} \in Z^2(X \git \hat{\mathsf{G}} ; \mathsf{U}(1)_{\pi})$ determines an unoriented topological field theory $\mathcal{Z}_{X \git \hat{\mathsf{G}}}^{\hat{\lambda}}$; this can be shown either by generalizing the above arguments or by a direct calculation using part (i). In any case, attached to the circle is the commutative Frobenius algebra
\[
\mathcal{Z}_{X \git \hat{\mathsf{G}}}^{\hat{\lambda}} (S^1) \simeq \Gamma_{\Lambda (X \git \mathsf{G})}(\uptau_{S^1}(\lambda)_{\mathbb{C}}^{-1}).
\]
The involution $p$ is defined using $\uptau_{\pi}^{\refl}(\hat{\lambda})$. The charge of the crosscap is
\[
Q = \sum_{\varsigma \in \hat{\mathsf{G}} \backslash \mathsf{G}} \sum_{x \in X^{\varsigma}} \hat{\lambda}_x([\varsigma \vert \varsigma]) l_{(x,\varsigma^2)}
\]
where $X^{\varsigma} = \{ x \in X \mid \varsigma \cdot x = x\}$, and so is a sum over the fixed point set
\[
(X \git \mathsf{G})^{\hat{\mathsf{G}} \slash \mathsf{G}} = \bigsqcup_{\varsigma \in \hat{\mathsf{G}} \backslash \mathsf{G}} X^{\varsigma}.
\]
This fixed point set plays the role of the orientifold plane of $X \git \mathsf{G}$.
\end{enumerate}
\end{Rems}

\footnotesize

\bibliographystyle{plain}
\bibliography{mybib}

 
\end{document}